\tikzset{double line with arrow/.style args={#1,#2}{decorate,decoration={markings,%
mark=at position 0 with {\coordinate (ta-base-1) at (0,1pt);
\coordinate (ta-base-2) at (0,-1pt);},
mark=at position 1 with {\draw[#1] (ta-base-1) -- (0,1pt);
\draw[#2] (ta-base-2) -- (0,-1pt);
}}}}
\newtheorem{theorem}{Theorem}[section]
\newtheorem{corollary}[theorem]{Corollary}
\newtheorem{question}[theorem]{Question}
\newtheorem{definition}[theorem]{Definition}
\newtheorem{example}[theorem]{Example}
\newtheorem{proposition}[theorem]{Proposition}
\newtheorem{conjecture}[theorem]{Conjecture}
\newtheorem*{theorem*}{Theorem}
\newtheorem*{definition*}{Definition}
\newtheorem*{remark*}{Remark}
\DeclareMathOperator{\ch}{ch}
\DeclareMathOperator{\Ch}{Ch}
\DeclareMathOperator{\td}{td}
\DeclareMathOperator{\Ext}{Ext}
\DeclareMathOperator*{\Poly}{Polytope}
\renewcommand{\Re}{\operatorname{Re}}
\def\chk#1{#1^{\smash{\scalebox{.7}[1.4]{\rotatebox{90}{\textup{\guilsinglleft}}}}}}
\newcommand{\defeq}{\coloneq}
\newcommand\restr[2]{{% we make the whole thing an ordinary symbol
\left.\kern-\nulldelimiterspace % automatically resize the bar with \right
#1 % the function
\vphantom{\big|} % pretend it's a little taller at normal size
\right|_{#2} % this is the delimiter
}}
\DeclarePairedDelimiter{\norm}{\lVert}{\rVert}
\DeclarePairedDelimiter{\abs}{\lvert}{\rvert}
\newcommand{\ols}[1]{\mskip.5\thinmuskip\overline{\mskip-.5\thinmuskip {#1} \mskip-.5\thinmuskip}\mskip.5\thinmuskip} % overline short
\newcommand{\olsi}[1]{\,\overline{\!{#1}}} % overline short italic
\newcommand\closure[1]{
  \tctestifnum{\count@stringtoks{#1}>1} %checks if number of chars in arg > 1 (including '\')
  {\ols{#1}} %if arg is longer than just one char, e.g. \mathbb{Q}, \mathbb{F},...
  {\olsi{#1}} %if arg is just one char, e.g. K, L,...
}
\long\def\count@stringtoks#1{\tc@earg\count@toks{\string#1}}
\long\def\count@toks#1{\the\numexpr-1\count@@toks#1.\tc@endcnt}
\long\def\count@@toks#1#2\tc@endcnt{+1\tc@ifempty{#2}{\relax}{\count@@toks#2\tc@endcnt}}
\def\tc@ifempty#1{\tc@testxifx{\expandafter\relax\detokenize{#1}\relax}}
\long\def\tc@earg#1#2{\expandafter#1\expandafter{#2}}
\long\def\tctestifnum#1{\tctestifcon{\ifnum#1\relax}}
\long\def\tctestifcon#1{#1\expandafter\tc@exfirst\else\expandafter\tc@exsecond\fi}
\long\def\tc@testxifx{\tc@earg\tctestifx}
\long\def\tctestifx#1{\tctestifcon{\ifx#1}}
\long\def\tc@exfirst#1#2{#1}
\long\def\tc@exsecond#1#2{#2}
\DeclareTextFontCommand{\boldemph}{\bfseries\em}
\begin{document}

\pagenumbering{roman}

% cover pages and table of contents
{
\title{Continuous and Discrete Asymptotic Behaviours of the J-function of a Fano Manifold}
\author{ChunYin HAU}
\date{}

\maketitle

% \vspace{10mm}

\begin{abstract} \noindent
In this paper, we propose a condition on the coefficients of a cohomology-valued power series, which we call ``asymptotically Mittag-Leffler''. 
We show that if the $J$-function of a Fano manifold is asymptotically Mittag-Leffler, then it has the exponential growth as $t\to +\infty$. 
This provides an alternative method to compute the principal asymptotic class of a Fano manifold using the coefficients of $J$-function. 
We also verify that the $J$-function of the projective space is asymptotically Mittag-Leffler, and the property of having an asymptotically Mittag-Leffler $J$-function is preserved when taking product and hypersurface.
\end{abstract}
}

\tableofcontents

\newpage

\pagenumbering{arabic}

\pagestyle{myheadings}
\markright{Continuous and Discrete Asymptotic Behaviours of the J-function}

% content

\section{Introduction}\label{sec:Introduction}
{
% main subject of thesis: investigation of Gamma class through: principal asymptotic class, (continuous) asymptotic behaviour of $J$-function, discrete asymptotic behaviour of $J$-function, "asymptotically Mittag-Leffler"

%% Introduction: why Gamma class? HRR, Mirror Symmetry, Rational/Integral Structure 

For a smooth manifold $X$, a characteristic class called the Gamma class can be defined by:
\begin{equation*}
\hat{\Gamma} \defeq \prod_{i=1}^{\dim X} \Gamma(1+\delta_i),
\end{equation*}
where the $\delta_i$ are the Chern roots of the tangent bundle.
Given an asymmetric pairing on $H^*(X)$ defined by
\begin{equation*}
[\alpha, \beta) = \frac{1}{(2\pi)^{\dim X}} \int_X (e^{\pi \mathtt{i} c_1} e^{\pi \mathtt{i} \mu} \alpha) \cup \beta,
\end{equation*}
the Hirzebruch-Riemann-Roch equation can be expressed in the form
\begin{equation*}
\begin{tikzcd}
{\chi(E,F)} \arrow[-,double line with arrow={-,-}]{r}										& {\sum_{n=0}^{\dim X} (-1)^n \dim \Ext^p (E, F)} \arrow[-,double line with arrow={-,-}]{d} \\
{[ \hat{\Gamma} \Ch (E), \hat{\Gamma} \Ch (F) )} \arrow[-,double line with arrow={-,-}]{r}	& {\int_X \ch(\chk{E}) \ch(F) \td_X}
\end{tikzcd}
\end{equation*}
where $\Ch(E) = \sum_{p=0}^{\dim X} (2 \pi \mathtt{i})^p \ch_p(E)$ is a twisted Chern character of $E$. This shows that the map $\hat{\Gamma} \Ch (-) : K(X) \rightarrow H^*(X)$ intertwines the Euler pairing $\chi$ with the pairing $[-,-)$.
The map $\hat{\Gamma}\Ch (-) : K(X) \rightarrow H^*(X)$ also appears in mirror symmetry, where it plays a crucial role in defining the integral structure for A-side quantum $D$-modules, which is mirror to the natural integral structure on the B-side quantum $D$-modules \cite{MR2553377}.

%% Gamma class - PAC - J-function

% mirror symmetry matching image of $\mathcal{O}$ with principal asymptotic class \cite{MR2553377} ****
When $X$ is a Fano manifold, it is conjectured that the Gamma class of $X$ is closely related to the principal asymptotic class \cite{MR3536989, MR4384381}.
The principal asymptotic class (see $\eqref{eq:contAsym}$ below) is a cohomology class defined up to a non-zero scalar multiple. 
For a Fano manifold $X$, it can be computed by analyzing the asymptotic behaviour of the $J$-function along the anti-canonical line. 
The $J$-function of a Fano manifold is a cohomology-valued function defined as a generating function of Gromov-Witten invariants:
\begin{equation*}
J(t) = t^{c_1} \left( 1+ \sum_{\substack{ d\in \mathsf{Eff}_{\neq 0} \\ 0 \leq \ell \leq \dim H^*(X)-1}} \left\langle \frac{\phi^\ell}{1-\psi} \right\rangle_{0,1,d} \phi_\ell \; t^{\langle c_1,d \rangle} \right).
\end{equation*}
Under mild conditions on the Fano manifold, $J(t)$ exhibits the following asymptotic behaviour as $t\to +\infty$:
\begin{equation*}
J(t) \sim t^{-\frac{1}{2}\dim X} e^{Tt} A, \quad \left( T\in\mathbb{R},\; A\in H^*(X) \right) \label{eq:contAsym} \tag{$\star$}
\end{equation*}
where $A$ is the principal asymptotic class. 
The proportionality of the principal asymptotic class and the Gamma class has been verified for several classes of Fano manifolds, including Grassmanians and Fano complete intersection in projective space \cite{MR3536989, MR4384381}, Fano $3$-fold with Picard number $1$ \cite{MR3462676}, and del Pezzo surface \cite{MR4266744}. 
Notably, this phenomenon has also been verified on some monotone symplectic but not Kähler manifolds \cite{non-Kahler}.
While recent counterexamples have demonstrated that $A$ is not always proportional to $\hat{\Gamma}_X$ \cite{counterexamples}, it remains a possibility that $A$ can be represented as $\hat{\Gamma}_X \Ch (E)$ for some $K$-class $E$.

%% Result

In this paper, we investigate the asymptotic behaviour of the $J$-function of a Fano manifold $X$ as $t\to +\infty$. 
We expand the $J$-function $J(t)$ into a series $J(t)=t^{c_1(X)} \sum_{m=0}^\infty J_{rm} t^{rm}$, and introduce the following condition on the coefficients:

\begin{definition*}

We say that $t^{\frac{1}{2}\dim X} J(t)$ is \boldemph{$(T,0,A)$-scaled asymptotically Mittag-Leffler} if its coefficients $J_{rm}$ satisfy the following aymptotic equivalence as $m\to\infty$:
\begin{equation*}
J_{rm} \sim \frac{T^{rm+\frac{1}{2} \dim X+c_1(X)}}{\Gamma \left( 1+rm+\frac{1}{2}\dim X+c_1(X) \right)} \cdot A. \quad \left( T\in\mathbb{R}_{>0},\; A\in H^*(X) \right) \label{eq:disAsym} \tag{$\star\star$}
\end{equation*}

\end{definition*}

We will prove that $\eqref{eq:disAsym}$ implies $\eqref{eq:contAsym}$, thus showing that the principal asymptotic class can also be computed from the asymptotic behaviour of the coefficients of the $J$-function as $m\to\infty$. 
We will also establish that $\eqref{eq:disAsym}$ is preserved under products and hypersurfaces.

\begin{theorem*}[Main Results]
If the $J$-function of a Fano manifold $X$ is asymptotically Mittag-Leffler $\eqref{eq:disAsym}$, then:
\begin{enumerate}[label=\normalfont (\arabic*)]
\item the $J$-function has the asymptotic equivalence $\eqref{eq:contAsym}$ as $t\to +\infty$ (Corollary \ref{corDisAsymtoContAym}), 
\item if another Fano manifold $Y$ also has an asymptotically Mittag-Leffler $J$-function, then the $J$-function of $X \times Y$ is also asymptotically Mittag-Leffler (Corollary \ref{corDisGammaIProduct}), 
\item if $Z$ is a degree $d$ Fano hypersurface of $X$ in the linear system $\abs{-\frac{d}{r} K_X}$ with $d=1, \ldots, r-1$ and $r=\max \{n\in\mathbb{N} \mid \frac{1}{n} c_1(X) \in H^2(X,\mathbb{Z}) \}$ is the Fano index of $X$, then the $J$-function of $Z$ is also asymptotically Mittag-Leffler (Proposition \ref{DisGammaIHypersurface}).
\end{enumerate}
\end{theorem*}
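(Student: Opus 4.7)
The plan is to prove the three parts separately. Write $s = \frac{1}{2}\dim X + c_1(X)$, so the hypothesis $\eqref{eq:disAsym}$ reads $J_{rm} \sim \frac{T^{rm+s}}{\Gamma(1+rm+s)}\,A$. For part (1), I would substitute this into $J(t)=t^{c_1(X)}\sum_m J_{rm}\, t^{rm}$ and split the series into a main term $t^{c_1(X)} T^{s} A \sum_m \frac{(Tt)^{rm}}{\Gamma(1+rm+s)}$ plus a remainder encoding the coefficient-wise error. The main sum, viewed as a function of $u=(Tt)^r$, is a generalized Mittag--Leffler function $E_{r,1+s}(u)$, well-defined for the nilpotent shift $c_1(X)$ inside $s$ via Taylor expansion in the scalar part. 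The classical asymptotic $E_{r,\beta}(z)\sim \frac{1}{r} z^{(1-\beta)/r} e^{z^{1/r}}$ as $z\to+\infty$ then produces $\frac{1}{r} t^{-\frac{1}{2}\dim X} e^{Tt} A$, matching $\eqref{eq:contAsym}$ (the constant $\frac{1}{r}$ is absorbed by the scalar ambiguity of $A$). The remainder is handled by a Tauberian estimate: choose a threshold $m_0$ so that the coefficient error is small for $m\geq m_0$, bound the tail by the same Mittag--Leffler asymptotic with a smaller constant, and observe that the finite head contributes only polynomial growth in $t$, which is dominated by $e^{Tt}$.

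For part (2), the Künneth factorization $J_{X\times Y}(t)=J_X(t)\boxtimes J_Y(t)$, arising from the product structure of genus-zero Gromov--Witten invariants, turns each coefficient of $J_{X\times Y}$ into a convolution of coefficients of $J_X$ and $J_Y$. Substituting $\eqref{eq:disAsym}$ into both factors reduces the claim to the Gamma-function convolution asymptotic
\[
\sum_{m_X+m_Y=n}\frac{T_X^{rm_X+s_X}}{\Gamma(1+rm_X+s_X)}\cdot\frac{T_Y^{rm_Y+s_Y}}{\Gamma(1+rm_Y+s_Y)} \,\sim\, \frac{(T_X+T_Y)^{rn+s_X+s_Y}}{\Gamma(1+rn+s_X+s_Y)},
\]
using $s_{X\times Y}=s_X+s_Y$ (Künneth applied to dimension and $c_1$). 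I would prove this by Laplace's method: Stirling's formula on each summand identifies a saddle at $m_X/n\to T_X/(T_X+T_Y)$, and a Gaussian-width integration around the saddle recovers the right-hand Gamma quotient with $T_{X\times Y}=T_X+T_Y$ and $A_{X\times Y}=A_X\boxtimes A_Y$ (up to a universal proportionality constant absorbed by the scalar ambiguity).

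For part (3), I would invoke the quantum Lefschetz theorem (Givental; Coates--Givental) to write the $I$-function of $Z$ as the hypergeometric modification of $J_X$ in which the coefficient of $t^{rk}$ is multiplied by $\Gamma(1+dk+dH)/\Gamma(1+dH)$ (with $H=c_1(X)/r$) and then restricted via $\iota_Z^*$. For $r-d\geq 2$ one has $J_Z=I_Z$; for $r-d=1$ a mirror map correction enters but should preserve the Mittag--Leffler property. After re-indexing so the power of $t$ is a multiple of the Fano index $r-d$ of $Z$, I would apply Stirling to the ratio $\Gamma(1+dk+dH)\Gamma(1+(r-d)k+s_Z)/\Gamma(1+rk+s_X)$; matching the exponential-in-$k$ factor pins down $T_Z$ in terms of $T_X$, $d$, $r$, while the sub-exponential constants determine $A_Z$ from $\iota_Z^*(A_X/\Gamma(1+dH))$ together with explicit scalars in $d$, $r-d$, $r$. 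The chief technical obstacle across all three parts is the Tauberian step---passing from coefficient-wise asymptotic equivalence to asymptotic equivalence of the corresponding sum. Part (2) additionally requires the saddle-point analysis to track the nilpotent shifts $s_X,s_Y$ rather than only the exponential scale, and part (3) demands careful Gamma-function bookkeeping with $dH$ as a cohomology shift, together with separate treatment of the $r-d=1$ case where the mirror map is non-trivial.
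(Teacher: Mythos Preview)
Your outline is sound and parallels the paper's argument closely for parts (1) and (3). For (1) the paper likewise splits into a Mittag--Leffler main term and a remainder; its technical device is the Riemann--Liouville integral $I_\alpha\langle f\rangle(t)=\int_0^t f(x)\frac{(t-x)^{\alpha-1}}{\Gamma(\alpha)}\,dx$, which turns the main sum into $\frac{1}{r}\sum_k I_{\beta}\langle e^{\xi_r^k t}\rangle$ and the remainder into $I_{\beta}\langle\rho\rangle$ for a function $\rho$ with $e^{-t}\rho(t)\to 0$. This is equivalent to your direct appeal to $E_{r,\beta}$-asymptotics, but packages the Tauberian step cleanly and handles the nilpotent shift in $s$ automatically. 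For (3) the paper does exactly your Stirling computation on $\Gamma(1+dm+dx)\,\Gamma(1+r_Zm+s_Z)/\Gamma(1+r_Xm+s_X)$; in the $r_Z=r_X-d=1$ case, rather than arguing that the mirror map ``should preserve'' the property, it observes that $e^{-c_0 t}$ is itself $(\,-c_0,1)$-aML and invokes the product result of part (2) to finish---a trick you may want to adopt.

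For part (2) there is both a methodological difference and a gap. Your saddle-point plan is viable, but the paper instead writes each convolution coefficient as a double integral over $[0,S_X]\times[0,S_Y]$ via $I_\alpha(T)=T^\alpha/\Gamma(1+\alpha)=\int_0^T I_{\alpha-\beta}(t)I_{\beta-1}(T-t)\,dt$, and compares it to the target $L_{rm}=1/\Gamma(1+rm+\beta_X+\beta_Y)$, which arises from the same integrand over a larger simplex $D$ by the multivariate beta identity; the discrepancy decomposes into three error terms shown to be $o(L_{rm})$. The gap in your write-up is that you implicitly assume $r_X=r_Y$: you index the convolution by $m_X+m_Y=n$ with a common $r$, whereas in general the sum runs over $r_X m_X + r_Y m_Y = rm$ with $r=\gcd(r_X,r_Y)$. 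This Diophantine constraint forces a roots-of-unity decomposition $\sum I_{r_Xm_X}(s_X)I_{r_Ym_Y}(s_Y)=\frac{1}{r_Xr_Y}\sum_{k,\ell} I_{rm}(\xi_{r_X}^k s_X+\xi_{r_Y}^\ell s_Y)$, and the off-diagonal terms ($\xi_{r_X}^k\neq\xi_{r_Y}^\ell$) must be shown to be exponentially negligible; it also produces the factor $r/(r_Xr_Y)$ in $A_{X\times Y}$. Your Laplace analysis would need to be redone along this thinner lattice, which is not a trivial bookkeeping change.
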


% rewrite this paragraph
We will first define $(T,\theta,A)$-scaled asymptotic Mittag-Leffler for cohomology-valued series, then prove the part (1) above in this general setting. 
We will then verify that $J$-function of projective space is asymptotically Mittag-Leffler.
Combining this with parts (2) and (3), we conclude that the $J$-functions of products and Fano hypersurfaces of projective space are also asymptotically Mittag-Leffler. 
We will further discuss the conditions under which the $J$-function of a Fano manifold is asymptotically Mittag-Leffler, and the potential relationship between the scaling $(T,\theta,A)$ and the eigenvalues of the operator $c_1(X)\star_0: H^*(X)\rightarrow H^*(X)$. 

\begin{remark*}
Kai Hugtenburg in \cite{non-Kahler} employed a more abstract approach to compute the principal asymptotic class via the asymptotics of coefficients of the $J$-function. He introduced three conditions: for some constant $C$,
\begin{enumerate}[label=\normalfont \arabic*.]
\item the $H^0$ component of the $J$-function, i.e., $\langle [\mathsf{pt}], J(t) \rangle$, has non-negative coefficients and is ``superpolynomially peaked near $n=Ct$'',
\item the quotients of the cohomology components relative to the $H^0$ component, i.e., $\frac{\langle \mathsf{a}, J_{rm} \rangle}{\langle [\mathsf{pt}], J_{rm} \rangle}$, are ``subpolynomially increasing'' for all $\mathsf{a} \in H_*(X)$,
\item the coefficients of the $J$-function satisfy the following asymptotic equivalence as $m\to\infty$: 
\begin{equation*}
{J_{rm} \sim \langle [\mathsf{pt}], J_{rm} \rangle \left( \frac{C}{m} \right)^{c_1(X)} \cdot A}.
\end{equation*}
\end{enumerate}
Under these conditions, he proved that the $J$-function has the following asymptotic behaviour as $t\to +\infty$:
\begin{equation*}
J(t) \sim \langle [\mathsf{pt}], J(t) \rangle \cdot A.
\end{equation*}
A $(T,A)$-scaled asymptotically Mittag-Leffler $J$-function with $\langle [\mathsf{pt}], A \rangle \neq 0$ satisfies all three of Hugtenburg's conditions with $C=T/r$, and part (1) of our result aligns with his finding. 
\end{remark*}

%% Sectional Abstract

In this paper, we assume that $0 \in \mathbb{N}$, and all cohomology spaces of $X$ are taken with complex coefficients, i.e., $H^*(X) = H^*(X, \mathbb{C})$.

In Section $2$, we provide a brief introduction to the principal asymptotic class of a Fano manifold. 
We also review the relationship between the Gamma class, the principal asymptotic class, and the limit of the $J$-function along the anti-canonical line. 
Additionally, we introduce the Riemann-Liouville integral, a generalization of the repeated integration formula, for use in Section $3$.

In Section $3$, we define $(T,\theta,A)$-scaled asymptotically Mittag-Leffler series as series whose coefficients are asymptotically equivalent to those of the Mittag-Leffler function. 
We state and prove the asymptotic behaviour an asymptotically Mittag-Leffler series exhibits as $t \to e^{\mathtt{i}\phi} \cdot \infty$ for various $\phi\in\mathbb{R}$. This result is then applied to the $J$-function of a Fano manifold. 
We provide a potential example of a Fano manifold having an asymptotic Mittag-Leffler $J$-function with $e^{\mathtt{i}\theta}\neq 1$.
We also propose a conjectural condition for a Fano manifold to have an asymptotically Mittag-Leffler $J$-function, and discuss its implications for the distribution of eigenvalues of the operator $c_1(X) \star_0:H^*(X) \rightarrow H^*(X)$.
Finally we verify that the $J$-function of projective spaces is asymptotically Mittag-Leffler in this section.

In Sections $4$ and $5$, we prove that the property of being an asymptotically Mittag-Leffler series is preserved under products and hypersurfaces. 
}

% \newpage

\section{Background}\label{sec:Background}
{
%% Background: What is Gamma class, how to investigate it through principal asymptotic class and (continuous) asympototic behaviour of $J$-function?

In this section, we will briefly review the background on defining the principal asymptotic class, including the Dubrovin connection and the $J$-function. We will then review the Gamma conjecture I, which relates the principal asymptotic class and the Gamma class. Finally, We will extend the Riemann-Liouville integral to cohomology-valued functions and establish some of its basic properties.

%% principal asymptotic class and J-function

\subsection{Principal Asymptotic Class}

The principal asymptotic class is initially defined as a generator of a subspace of flat
sections. To define the Dubrovin connection of a Fano manifold, we introduce the Gromov-Witten invariants. These invariants virtually count the number of intersections of stable curves within the Fano manifold.

% Gromov-Witten invariant, quantum product, Dubrovin connection

% Gromov Witten invariant

\begin{definition}[Gromov-Witten Invariant {\cite[Chapter~7]{MR1677117}}, \cite{MR1291244}]

Let $X$ be a Fano manifold. Then the \boldemph{$n$-point genus $0$ Gromov-Witten invariant with gravitational descendants} is defined as:
\begin{equation*}
\langle \psi^{k_1} \alpha_1, \psi^{k_2} \alpha_2, \ldots, \psi^{k_n} \alpha_n \rangle_{0,n,d} = \int_{[\closure{\mathcal{M}}_{0,n}(X,d)]^{\mathrm{vir}}} \prod_{i=1}^n \left( c_1(\mathcal{L}_i)^{k_i} \cup \mathrm{ev}^*_i (\alpha_i) \right)
\end{equation*}
where:
\begin{itemize}
\item $k_i\in \mathbb{N}$, $\alpha_i \in H^*(X)$ for $i=1,2,\ldots, n$, 
\item $d\in H_2(X,\mathbb{Z})$,
\item $[\closure{\mathcal{M}}_{0,n}(X,d)]^{\mathrm{vir}}$ is the virtual fundamental class of the moduli stack of $n$-pointed genus $0$ stable maps to $X$ of degree $d \in H_2(X)$,
\item $\mathrm{ev}_i: \closure{\mathcal{M}}_{0,n}(X,d) \rightarrow X$ is the evaluation map at the $i$-th marked point,
\item $\mathcal{L}_i$ is the line bundle over $\closure{\mathcal{M}}_{0,n}(X,d)$ whose fibre at a stable map is the cotangent space of the source curve at $i$-th marked point.
\end{itemize}

\end{definition}

% quantum product
Using Gromov-Witten invariants we can define a product structure, the quantum product, on the cohomology group. 
Let $\{\phi_\ell\}_{\ell=0}^{\dim H^*(X)-1}$ be a basis of $H^*(X)$, such that $\phi_0=1 \in H^0(X)$ and $\{\phi_j\}_{j=1}^{b^2(X)}$ is a basis of $H^2(X)$. Let $\{\phi^\ell\}_{\ell=0}^{\dim H^*(X)-1}$ be dual basis of $\{\phi_\ell\}_{\ell=0}^{\dim H^*(X)-1}$ with respect to Poincaré pairing.

\begin{definition}[Quantum Product {\cite[Chapter~8]{MR1677117}}]

Let $X$ be a Fano manifold. The \boldemph{quantum product}, $\star_\tau: H^*(X) \times H^*(X) \rightarrow H^*(X)$, for $\tau \in H^2(X)$, is defined by the formal sum
\begin{equation*}
\alpha \star_\tau \beta = \alpha \cup \beta + \sum_{\substack{d\in\mathsf{Eff}_{\ne 0}, \\0\leq \ell \leq N-1}} \langle \alpha , \beta , \phi^\ell \rangle_{0,3,d} \; \phi_\ell \; e^{\langle \tau , d \rangle} \tag{$\diamond$} \label{eq:defQuantumPro}
\end{equation*}
where $\mathsf{Eff} \subseteq H_2(X,\mathbb{Z})$ is the set of effective curve classes.

\end{definition}

The quantum product reduces to the classical cup product when $\langle \tau , d \rangle \to -\infty$ for non-zero effective class $d \in \mathsf{Eff}_{\neq 0}$. 
When $X$ is Fano, the degree axiom of Gromov-Witten invariants \cite{MR1291244} implies that $\langle \alpha , \beta , \gamma \rangle_{0,3,d} = 0$ when $\deg \alpha + \deg \beta + \deg \gamma \neq \langle c_1,d \rangle$. 
Therefore, the sum on the right-hand side of $\eqref{eq:defQuantumPro}$ is finite.

% quantum differential equations
All the genus-$0$ Gromov-Witten invariants can be encoded within a connection, known as the Dubrovin conenction.
\begin{definition}[Quantum Differential Equations {\cite[Chapter~10]{MR1677117}}]\label{defDubrovinConn}

Let $X$ be a Fano manifold. Let $B$ be the trivial vector bundle with fibre $H^*(X)$ over $H^2(X) \times \mathbb{C}^\times$. Set the coordinate $(\tau,z)=(\sum_{j=1}^{b^{2}(X)} t_j \phi_j, z) \in H^2(X) \times \mathbb{C}^\times$. The \boldemph{Dubrovin connection} on $B$ can be defined by:
\begin{align*}
\nabla_{\partial_{t_j}} \varphi &= \frac{1}{z} \phi_j \star_\tau \varphi, \\
\nabla_{z\partial_{z}} \varphi &= -\frac{1}{z} c_1(X) \star_\tau \varphi + \mu(\varphi),
\end{align*}
where $\mu:H^*(X) \rightarrow H^*(X)$; the Hodge grading operator, is a linear map defined by
\begin{equation*}
\mu(\phi_\ell)=\frac{1}{2} (\deg \phi_\ell - \dim X) \phi_\ell,
\end{equation*}
and $\varphi \in H^*(X)$ is regarded as a constant section of the trivial bundle.

\end{definition}

The Dubrovin connection is a flat connection. Its fundamental solution along the $\tau$-direction, i.e., sections satisfying $\nabla_{\partial_{t_j}} \left( L(\tau,z) \alpha \right) =0$ for all $j=1, \ldots, b^2(X)$, can be given by
\begin{equation*}
L(\tau, z)\alpha \defeq e^{-\tau/z} \alpha - \hspace{-1em} \sum_{\substack{ d \in \mathsf{Eff}_{\neq 0} \\ 0 \leq \ell \leq \dim H^*(X)-1}} \hspace{-1em} \left\langle \phi^\ell, \frac{e^{-\tau/z}\alpha}{z+\psi} \right\rangle_{0,2,d} e^{\langle \tau, d \rangle} \phi_\ell,
\end{equation*}
where the second argument of the coefficients is expanded as
\begin{equation*}
\frac{1}{z+\psi}=\sum_{k=0}^\infty (-1)^k \: z^{-(k+1)} \: \psi^k.
\end{equation*}
By the linearilty of the Gromov-Witten invariants, the coefficients are a sum of the Gromov-Witten invariants with gravitational descendants.

% weakly $\mathcal{O}$, principal asymptotic class, $J$-function

% simple rightmost eigenvalue
To isolate a $1$-dimensional subspace of flat sections of the Dubrovin connection, a stability condition is imposed on the distribution of eigenvalues of the operator $c_1(X) \star_0: H^*(X)\rightarrow H^*(X)$.

\begin{definition}[Simple Rightmost Eigenvalue \cite{MR3536989, counterexamples}]\label{defRightmost}

Let $C$ be a linear endomorphism of a finite dimensional vector space. Let $\sigma(C) \subseteq \mathbb{C}$ be the set of eigenvalues of $C$. Let $T$ be an eigenvalue of $C$ having the largest real part, i.e.,
\begin{equation*}
T\in \sigma(C) \text{ such that } \Re T= \max_{\lambda \in \sigma(C)} \left( \Re \lambda \right).
\end{equation*}
We say that $C$ has a \boldemph{simple rightmost eigenvalue} if $T$ is a unique eigenvalue with that real part, i.e.,
\begin{enumerate}[label=\normalfont \arabic*.]
\item $T$ is a simple root of the characteristic polynomial of $C$, and
\item if some $T' \in \sigma(C)$ satisfies $\Re T' = \Re T$, then $T = T'$.
\end{enumerate}

\end{definition}

Note that if $C$ is a real operator, e.g., $C= c_1(X) \star_0 : H^*(X) \rightarrow H^*(X)$ for some Fano manifold $X$, and if $C$ has a simple rightmost eigenvalue $T$, then $T$ is necessarily a real number.

%% having weakly property $\mathcal{O}$ implies existence of principal asymptotic class
When $c_1(X)\star_0$ has a simple rightmost eigenvalue $T$, results from \cite{MR3536989} allow us to isolate a $1$-dimensional space consisting of flat sections with the fastest rate of decay, $e^{-Tt}$, as $t\to +\infty$. The principal asymptotic class is defined as a generator of this $1$-dimensional space. 

\begin{definition}[Principal Asymptotic Class \cite{MR3536989}]\label{defPrinClass}

Let $X$ be a Fano manifold whose $c_1(X) \star_0$ has a simple rightmost eigenvalue $T$. Parametrize the anti-canonical line by $\iota_1: \mathbb{R}_{>0} \rightarrow \mathbb{R}\cdot c_1(X) \times \{1\} \subseteq H^2(X) \times \mathbb{C}^\times$ defined by $\iota_1: t \mapsto (c_1(X) \log t ,1)$. Consider the pullback of the trivial bundle $B$ defined in Definition \ref{defDubrovinConn} by this parametrization. Then the pullback Dubrovin connection is given by
\begin{equation*}
\nabla_{t\frac{d}{dt}} = t\frac{d}{dt} + c_1(X) \star_{c_1(X) \log t}.
\end{equation*}
Define a subspace of the space of flat sections of the pullback bundle by
\begin{align*}
& \mathcal{A} \defeq \\
& \left\{ s: \mathbb{R}_{>0} \rightarrow  H^*(X) \;\middle\vert\; \nabla_{t \frac{d}{dt}} s(t) = 0, \; \norm*{e^{T t} \, s(t)}=\mathit{O}(t^m) \text{ as } t\to +\infty \text{ for some } m \right\}.
\end{align*}
The space $\mathcal{A}$ is $1$-dimentional because $c_1(X) \star_0$ has a simple rightmost eigenvalue \cite{MR3536989}. Define the \boldemph{principal asymptotic class} $A_X \in H^*(X)$ of $X$ to be a generator of $\mathcal{A}$, i.e.,
\begin{equation*}
\mathbb{C} \cdot (L(c_1(X) \log t,1) A_X) = \mathcal{A}.
\end{equation*}
Note that the principal asymptotic class is defined up to a non-zero scalar multiple $c\in \mathbb{C}^\times$.

\end{definition}

% gauge transform
Extend the above parametrization to the complex anti-canonical line $\iota_1: \widetilde{\mathbb{C}^\times} \rightarrow H^*(X) \times \mathbb{C}^\times$, where $\widetilde{\mathbb{C}^\times}$ denotes the universal covering of $\mathbb{C}^\times$. Also parametrize the $z$-directional line by $\iota_2: z \mapsto (0,z)$. Then we have parametrizations of the anti-canonical line and the $z$-line, respectively, defined as:
\begin{equation*}
\begin{tikzcd}[row sep=0pt]
\widetilde{\mathbb{C}^\times} \arrow[r, "\iota_1"] & H^*(X)\times \mathbb{C}^\times &  & \widetilde{\mathbb{C}^\times} \arrow[r, "\iota_2"] & H^*(X)\times \mathbb{C}^\times \\
t \arrow[r, maps to] & {(c_1(X) \log t, 1)}, &  & z \arrow[r, maps to] & {(0,z)}.
\end{tikzcd}
\end{equation*}
The pullback Dubrovin connections by $\iota_1$ and $\iota_2$ are gauge equivalent after the identification $t=z^{-1}$ \cite{MR3536989}, i.e.,
\begin{equation*}
\iota_1^* \nabla = z^{\mu} \; \left( \iota_2^* \nabla \right) \; z^{-\mu}.
\end{equation*}
Therefore, information about the singularity at $z=0$ of the Dubrovin connection on the line $\tau=0$ can also be obtained by considering $t\to\infty$ on the anti-canonical line.

% J function
Computing the principal asymptotic class via the Dubrovin connection can be difficult in practice. Fortunately, it can also be computed by considering a limit involving a cohomology-valued function called the $J$-function.
\begin{definition}[$J$-function {\cite[Chapter~10]{MR1677117}}]\label{defJFunction}

Let $X$ be a Fano manifold. Then the \boldemph{$J$-function} of $X$, $J: H^*(X) \times \mathbb{C}^\times \rightarrow H^*(X)$, is defined as the following cohomology-valued function:
\begin{equation*}
J: (\tau,z) \mapsto L(\tau,z)^{-1} 1 =e^{\tau/z} \left( 1+ \hspace{-1em} \sum_{\substack{ d \in \mathsf{Eff}_{\neq 0} \\ 0 \leq \ell \leq \dim H^*(X)-1}} \hspace{-1em} \left\langle \frac{\phi^\ell}{z-\psi} \right\rangle_{0,1,d} \hspace{-1em} \phi_\ell \; e^{\langle \tau, d \rangle} \right).
\end{equation*}
On the anti-canonical line $\mathbb{R} \cdot c_1(X) \times \{1\}$ with coordinate $(c_1(X) \log t , 1), t\in\mathbb{R}_{>0}$, the $J$-function is of the form:
\begin{equation*}
J(c_1(X) \log t, 1) = J(t) = t^{c_1(X)} \left( 1+ \hspace{-1em} \sum_{\substack{ d\in \mathsf{Eff}_{\neq 0} \\ 0 \leq \ell \leq \dim H^*(X)-1}} \hspace{-1em} \left\langle \frac{\phi^\ell}{1-\psi} \right\rangle_{0,1,d} \hspace{-1em} \phi_\ell \; t^{\langle c_1(X),d \rangle} \right).
\end{equation*}
Let $r=\max \{n\in\mathbb{N} \mid \frac{1}{n} c_1(X) \in H^2(X,\mathbb{Z}) \}$ be the Fano index of $X$. Then $J(t)$ can be expanded as a power series of the form $J(t) = t^{c_1(X)} \sum_{m=0}^\infty J_{rm} t^{rm}$.

\end{definition}

% continuous asymptote of $J$-fucntion

By the relation between the $J$-function and the Dubrovin connection, the principal asymptotic class can also be computed by the limit of $J(t)$ when $t\to +\infty$:

\begin{proposition}[\cite{MR3536989}]\label{propContAsym}

Let $X$ be a Fano manifold. Let $J(t)$ be the $J$-function of $X$ on the anti-canonical line, as defined in Definition \ref{defJFunction}. Assume that $c_1(X) \star_0$ has a simple rightmost eigenvalue (Definition \ref{defRightmost}).
We have the asymptotic equivalence of $J(t)$ as $t\to +\infty$: 
\begin{equation*}\label{TotalContAsymp}
J(t) = t^{-\frac{1}{2} \dim X} \; e^{Tt} \cdot \left(A + \mathit{o}(1) \right),
\end{equation*}
where $T$ is the rightmost eigenvalue of $c_1(X) \star_0$, and $A$ is a principal asymptotic class of $X$.

\end{proposition}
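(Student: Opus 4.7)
The plan is to exploit the gauge equivalence $\iota_1^*\nabla = z^\mu(\iota_2^*\nabla)z^{-\mu}$ under $t = z^{-1}$, which translates asymptotics of flat sections on the anti-canonical line as $t\to+\infty$ into a local analysis of the irregular singularity of the quantum connection $\iota_2^*\nabla$ at $z=0$. The workhorse there is the Hukuhara-Turrittin-Sibuya structure theorem for rank-one irregular singularities.

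First I would apply Hukuhara-Turrittin to $\iota_2^*\nabla$ at $\tau=0$, whose leading symbol is $c_1(X)\star_0/z$. This produces a formal fundamental matrix of the shape $\hat{P}(z)z^{-\mu}e^{-U/z}$, where $U$ has the same spectrum as $c_1(X)\star_0$ and $\hat{P}(0)$ block-diagonalizes the leading symbol. The sectorial existence (Borel-Ritt) theorem then promotes each formal solution $e^{-\lambda/z}$ to a genuine holomorphic flat section in any sufficiently narrow Stokes sector. In the sector containing the positive real axis, one obtains a basis $\{\tilde s_\lambda\}$ of sectorial flat sections; pulling back by the $z^\mu$ gauge yields corresponding flat sections of $\iota_1^*\nabla$ on $t>0$ with leading asymptotics $\tilde s_\lambda(t) \sim e^{-\lambda t}\,t^{-\mu}v_\lambda$, where $v_\lambda$ is an eigenvector of $c_1(X)\star_0$ for eigenvalue $\lambda$.

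Next, because $T$ is the simple rightmost eigenvalue, $\tilde s_T$ is the unique sectorial flat section (up to scalar) satisfying the decay condition defining $\mathcal{A}$; by Definition \ref{defPrinClass} it must be proportional to $L(c_1(X)\log t, 1)A_X$. Writing $\tilde s_\lambda(t) = L(c_1(X)\log t,1)\tilde A_\lambda$ for constant vectors $\tilde A_\lambda$ (which form a basis of $H^*(X)$ with $\tilde A_T\propto A_X$), I would expand the constant vector $1 \in H^*(X)$ at each $t$ in the sectorial basis as $1 = \sum_\lambda c_\lambda(t)\tilde s_\lambda(t)$ and solve for $c_\lambda(t)$ by applying $t^\mu$. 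Since $t^\mu\cdot 1 = t^{-\frac{1}{2}\dim X}\cdot 1$ (because $\mu(\phi_0) = -\frac{1}{2}\dim X\cdot\phi_0$), decomposing $1 = \sum_\lambda\alpha_\lambda v_\lambda$ in the eigenbasis of $c_1(X)\star_0$ gives $c_\lambda(t)\sim\alpha_\lambda\, t^{-\frac{1}{2}\dim X}\,e^{\lambda t}$.

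Substituting back,
\begin{equation*}
J(t) = L(c_1(X)\log t, 1)^{-1}\cdot 1 = \sum_\lambda c_\lambda(t)\tilde A_\lambda \sim \sum_\lambda\alpha_\lambda\,t^{-\frac{1}{2}\dim X}\,e^{\lambda t}\,\tilde A_\lambda,
\end{equation*}
whose dominant term as $t\to+\infty$ is the $\lambda=T$ one, yielding $J(t)\sim t^{-\frac{1}{2}\dim X}\,e^{Tt}A$ with $A = \alpha_T\tilde A_T$ proportional to $A_X$. The hard part will be the rigorous use of Hukuhara-Turrittin-Sibuya together with careful bookkeeping of the $z^\mu$ gauge, so as to control the Stokes-related sub-leading corrections and ensure they do not spoil the dominant $\lambda=T$ contribution.
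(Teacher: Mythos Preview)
The paper gives no proof of this proposition; it is quoted as a result from \cite{MR3536989}. Your outline is essentially the argument used there: analyse the irregular singularity of the quantum connection at $z=0$ via Hukuhara--Turrittin/Wasow, transport the sectorial basis of flat sections to the anti-canonical line through the gauge $\iota_1^*\nabla = z^{\mu}(\iota_2^*\nabla)z^{-\mu}$, and read off the dominant exponential $e^{Tt}$. So the plan is correct in shape and matches the source.

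Two points to tighten. First, $c_1(X)\star_0$ need not be semisimple away from the simple eigenvalue $T$, so there is in general no eigenbasis $\{v_\lambda\}$ in which to decompose $1$; you must work with the generalized eigenspace decomposition, which introduces extra polynomial-in-$t$ factors in the non-$T$ contributions but does not affect dominance since $\Re\lambda < T$ there. Second, and more substantively, your formula $A=\alpha_T\tilde A_T$ yields a \emph{nonzero} principal asymptotic class only if $\alpha_T\neq 0$, i.e.\ only if the projection of $1\in H^0(X)$ onto the $T$-eigenline of $c_1\star_0$ is nonzero. This does not fall out of the Hukuhara--Turrittin analysis and requires a separate argument; in \cite{MR3536989} it is supplied by a Perron--Frobenius-type positivity statement for the $T$-eigenvector. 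Without that step your argument gives only $J(t)=t^{-\frac{1}{2}\dim X}e^{Tt}\bigl(\alpha_T\tilde A_T + o(1)\bigr)$ with $\alpha_T$ a priori possibly zero.
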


%% Gamma class and Gamma conjecture

\subsubsection*{Gamma Class and Gamma Conjecture}

It has been conjectured that the principal asymptotic class is equal to the Gamma class. We extend the definition of the Gamma function to cohomology classes. 

% definition of Gamma function on cohology ring
\begin{definition}\label{defGammaFunction}

Let $X$ be a manifold. For $\alpha\in H^*(X)$, let $\alpha=\alpha_0 + \alpha_{\geq 2}$, where $\alpha_0\in H^0(X)$ and $\alpha_{\geq 2} \in H^{\geq 2}(X)$. For $\alpha_0 \notin -\mathbb{N}$, $\Gamma(\alpha)\in H^*(X)$ is defined to be one of the following (all are equivalent):
\begin{enumerate}[label=\normalfont \arabic*.]
\item \begin{equation*} \sum_{k=0}^{\dim X} \frac{1}{k!} \restr{\frac{d^k}{dx^k} \Gamma(x)}{x=\alpha_0} \hspace{-1em} \alpha_{\geq 2}^k \quad \left( = \exp\left( \alpha_{\geq 2} \restr{\frac{d}{dx}}{x=\alpha_0} \right) \; \Gamma(x) \right) , \end{equation*}
\item for $\Re \alpha_0 > 0$, \begin{equation*} \int_0^\infty e^{-t} \: t^{\alpha-1} \; dt, \end{equation*} where $t^{\alpha-1} = e^{(\alpha-1) \log t} = t^{\alpha_0 - 1} \sum_{k=0}^{\dim X} \frac{1}{k!} \: (\log t)^k \: (\alpha_{\geq 2})^k$ is regarded as a cohomology-valued function,
\item \begin{equation*} \lim_{m\to \infty} \frac{m! \; (m+1)^\alpha}{\prod_{k=1}^m (\alpha+k)}. \end{equation*}
\end{enumerate}

\end{definition}

% basic propoerties of Gamma function used
Definition \ref{defGammaFunction} implies that the following properties of the Gamma function naturally extend to cohomology classes:

\begin{proposition}\label{propGammaFunction}

Let $X$ be a manifold, $\alpha \in H^*(X)$. Then the Gamma function on $H^*(X)$ satisfies the following properties:
\begin{enumerate}[label=\normalfont (\arabic*)]
\item \begin{equation*} \alpha \; \Gamma(\alpha)=\Gamma(\alpha+1), \end{equation*}
\item (multiplication formula) for $m\in \mathbb{N}_{> 0}$, \begin{equation*} \prod_{k=0}^{m-1} \Gamma\left( \alpha +\frac{k}{m} \right) = (2\pi)^{\frac{m-1}{2}} \; m^{\frac{1}{2}-m \alpha} \; \Gamma(m \alpha), \end{equation*}
\item \begin{equation*} \lim_{n \to \infty} \frac{\Gamma(n+\alpha)}{\Gamma(n) \; n^\alpha} =1, \end{equation*}
\item (Stirling approximation) \begin{equation*} \lim_{n \to \infty} \frac{\Gamma(1+n+\alpha)}{(2 \pi)^\frac{1}{2} \; (n+\alpha)^\frac{1}{2} \; (n+\alpha)^{n+\alpha} \; e^{-(n+\alpha)}}=1. \end{equation*}
\end{enumerate}

\end{proposition}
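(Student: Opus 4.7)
The plan is to exploit the nilpotency of $\alpha_{\geq 2}$ in $H^*(X)$, which makes
\begin{equation*}
E_\alpha(f) \defeq \sum_{k\geq 0}\frac{f^{(k)}(\alpha_0)}{k!}\,\alpha_{\geq 2}^k \in H^*(X)
\end{equation*}
a well-defined finite sum for any germ $f$ of holomorphic functions at $\alpha_0\in\mathbb{C}$. The first step is to verify that $f\mapsto E_\alpha(f)$ is a $\mathbb{C}$-algebra homomorphism: Leibniz's rule for $(fg)^{(k)}$ combined with the Cauchy product formula yields $E_\alpha(fg) = E_\alpha(f)\,E_\alpha(g)$. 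I would also record the compatibilities $E_\alpha(f(\,\cdot\,+c)) = E_{\alpha+c}(f)$ and $E_\alpha(f(m\,\cdot\,)) = E_{m\alpha}(f)$, both immediate from the chain rule. The content of Definition \ref{defGammaFunction} is precisely that $\Gamma(\alpha) = E_\alpha(\Gamma)$.

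With this framework, (1) and (2) become direct transports of classical identities. For (1), apply $E_\alpha$ to $x\,\Gamma(x)=\Gamma(x+1)$ and use multiplicativity with $E_\alpha(x)=\alpha$, observing that $E_\alpha(\Gamma(\,\cdot\,+1)) = E_{\alpha+1}(\Gamma) = \Gamma(\alpha+1)$. For (2), apply $E_\alpha$ to Gauss's scalar multiplication formula; the only nontrivial bookkeeping is to recognise $E_\alpha(x\mapsto m^{1/2-mx}) = m^{1/2-m\alpha_0}\exp(-m\log m\cdot\alpha_{\geq 2})$ as the natural definition of $m^{1/2-m\alpha}$, and to use $E_\alpha(\Gamma(m\,\cdot\,))=E_{m\alpha}(\Gamma)=\Gamma(m\alpha)$.

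For the limit statements (3) and (4), I would invoke the Weierstrass convergence principle: if holomorphic functions $f_n\to f$ locally uniformly on a neighborhood of $\alpha_0$, then by Cauchy's integral formula for derivatives $f_n^{(k)}(\alpha_0)\to f^{(k)}(\alpha_0)$ for every $k$, hence $E_\alpha(f_n)\to E_\alpha(f)$ in $H^*(X)$ componentwise. Part (3) then reduces to the classical scalar limit $\Gamma(n+x)/(\Gamma(n)\,n^x)\to 1$ locally uniformly away from the poles of $\Gamma$, which can be deduced from $(\log f_n)'(x)=\psi(n+x)-\log n\to 0$ together with the analogous polygamma tails for higher derivatives. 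Part (4) reduces in the same way to classical Stirling in a sector around the positive real axis, applied to $f_n(x)=\Gamma(1+n+x)\big/\bigl[\sqrt{2\pi}\,(n+x)^{n+x+1/2}\,e^{-(n+x)}\bigr]$.

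The main obstacle is purely bookkeeping: each cohomology-valued expression on the right-hand sides—$n^\alpha$, $m^{1/2-m\alpha}$, $(n+\alpha)^{n+\alpha}$, and their reciprocals (handled via $1/(1+\nu)=\sum_{k\geq 0}(-\nu)^k$ for nilpotent $\nu$, which terminates)—must be identified with $E_\alpha$ of an appropriate scalar function, after which each of (1)--(4) becomes a standard classical fact about the ordinary Gamma function.
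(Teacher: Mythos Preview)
Your proposal is correct. The paper does not supply an explicit proof: it simply states that ``Definition \ref{defGammaFunction} implies that the following properties of the Gamma function naturally extend to cohomology classes'' and then lists the proposition without a proof block. Your approach---formalising the Taylor-expansion map $E_\alpha$ as a $\mathbb{C}$-algebra homomorphism compatible with affine reparametrisations, then transporting the scalar identities (1)--(2) directly and handling the limits (3)--(4) via locally uniform convergence of holomorphic functions and Cauchy's estimates on derivatives---is precisely the natural way to substantiate that one-line assertion, and is consistent with the paper's Definition \ref{defGammaFunction}(1).
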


% Gamma class
The Gamma class of a smooth manifold $X$ is then defined by:

\begin{definition}[Gamma Class \cite{MR3536989}]\label{defGammaClass}

Let $X$ be a manifold, and let $\{ \delta_i \}_{i=1}^{\dim X}$ be the Chern roots of $X$. Then the \boldemph{Gamma class} of $X$, written as $\hat{\Gamma}_X$, is defined by
\begin{equation*}
\hat{\Gamma}_X \defeq \prod_{i=1}^{\dim X} \Gamma(1+\delta_i)=\exp \left( -\gamma \; c_1(X) + \sum_{k \geq 2} (-1)^k \; (k-1)! \; \zeta(k) \; \ch_k(X) \right),
\end{equation*}
where $\gamma$ is the Euler constant, and $\zeta$ is the Riemann zeta function.

\end{definition}

One may further extend the definition to the Gamma class of a vector bundle by defining the Gamma class of the vector bundle to be the product over its Chern roots. The Gamma class is multiplicative with respect to exact sequences of vector bundles.

% original Gamma conjecture I
The Gamma conjecture I states that when the principal asymptotic class is well-defined, it is equal to the Gamma class (up to a non-zero constant multiple).

\begin{conjecture}[Gamma Conjecture I \cite{MR3536989, MR4384381}]

Let $X$ be a Fano manifold such that $c_1(X)\star_0$ has a simple rightmost eigenvalue (Definition \ref{defRightmost}). Then the Gamma class of $X$ (Definition \ref{defGammaClass}) gives the principal asymptotic class of $X$ (Definition \ref{defPrinClass}), i.e.,
\begin{equation*}
\hat{\Gamma}_X \in \mathbb{C} \cdot A_X.
\end{equation*}

\end{conjecture}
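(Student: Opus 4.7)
The plan is to leverage the paper's framework, which reduces the asymptotic analysis of $J(t)$ to the asymptotics of its coefficients $J_{rm}$. By Proposition \ref{propContAsym}, under the simple rightmost eigenvalue hypothesis, $J(t) \sim t^{-\frac{1}{2}\dim X} e^{Tt} A_X$, so it suffices to identify $A_X$ with $\hat{\Gamma}_X$ up to a scalar. The cleanest route is to verify the asymptotically Mittag-Leffler condition with $A = \hat{\Gamma}_X$, namely
\begin{equation*}
J_{rm} \sim \frac{T^{rm + \frac{1}{2}\dim X + c_1(X)}}{\Gamma\bigl(1 + rm + \frac{1}{2}\dim X + c_1(X)\bigr)} \, \hat{\Gamma}_X \quad \text{as } m \to \infty,
\end{equation*}
and then invoke Corollary \ref{corDisAsymtoContAym} to transport this to the continuous asymptotic, matching Proposition \ref{propContAsym}.

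First, I would handle the base case $X = \mathbb{P}^n$ directly. Givental's formula gives
\begin{equation*}
J(t) = t^H \sum_{m\geq 0} \frac{t^{(n+1)m}}{\prod_{k=1}^m (H+k)^{n+1}},
\end{equation*}
so by Proposition \ref{propGammaFunction}(1) the coefficient reduces to $J_{(n+1)m} = \Gamma(1+H)^{n+1}/\Gamma(1+m+H)^{n+1}$. The Euler sequence $0 \to \mathcal{O} \to \mathcal{O}(1)^{\oplus(n+1)} \to T\mathbb{P}^n \to 0$ combined with the multiplicativity of $\hat{\Gamma}$ over short exact sequences gives $\hat{\Gamma}_{\mathbb{P}^n} = \Gamma(1+H)^{n+1}$. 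Applying the Stirling asymptotic of Proposition \ref{propGammaFunction}(4) to both $\Gamma(1+m+H)^{n+1}$ and the target denominator $\Gamma(1+(n+1)m + n/2 + (n+1)H)$ then matches the Mittag-Leffler template with $T = n+1$ and $A = \hat{\Gamma}_{\mathbb{P}^n}$.

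Next, I would propagate to Fano complete intersections by combining parts (2) and (3) of the main theorem (preservation under products and hypersurfaces). The essential point is that the coefficient $A$ should transform multiplicatively under products, and along the Koszul resolution under hypersurfaces, exactly as $\hat{\Gamma}$ does via its multiplicativity in short exact sequences of vector bundles. Starting from $A = \hat{\Gamma}$ for $\mathbb{P}^n$, one would track the scalar through these operations to recover $A = \hat{\Gamma}$ for the resulting variety. Grassmannians could be approached via the abelian/non-abelian correspondence, reducing them to configurations of projective spaces.

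The core obstacle, and the reason this remains a conjecture, is twofold. First, in full generality one must establish the asymptotically Mittag-Leffler property of $J(t)$, which demands uniform control over all Gromov-Witten invariants $\langle \phi^\ell/(1-\psi)\rangle_{0,1,d}$ as $\langle c_1(X), d\rangle \to \infty$; no intrinsic tool currently accomplishes this outside of situations governed by torus actions, mirror symmetry, or explicit hypergeometric modifications. Second, even once the Mittag-Leffler property is in hand, pinning the coefficient down to $\hat{\Gamma}_X$ rather than a twisted variant $\hat{\Gamma}_X \Ch(E)$ requires a direct bridge to the $\hat{\Gamma}$-integral structure on the quantum $D$-module. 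As the paper's introduction notes, counterexamples exist in which $A_X$ is not proportional to $\hat{\Gamma}_X$, so any truly general attack can at best prove $A_X \in \mathbb{C} \cdot \hat{\Gamma}_X \Ch(E)$ for some canonical $K$-class $E$; a uniform proof of the stated conjecture is therefore feasible only under additional geometric hypotheses, such as the existence of a suitable full exceptional collection singling out a specific $E$.
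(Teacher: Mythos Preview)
The statement you are attempting to prove is presented in the paper as a \emph{conjecture}, not a theorem; the paper gives no proof of it whatsoever. Indeed, the introduction explicitly notes that counterexamples to the conjecture as stated are known (citing \cite{counterexamples}), so a proof in full generality is impossible. Your proposal is therefore not to be compared against any argument in the paper, because none exists.

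That said, your outline is an accurate summary of what the paper \emph{does} accomplish: it verifies the asymptotically Mittag-Leffler property with $A=\hat{\Gamma}_X$ for $\mathbb{P}^n$ directly, and then shows that this property (with the correct transformation of $A$) propagates under products and under taking Fano hypersurfaces in $\lvert -\tfrac{d}{r}K_X\rvert$. You have also correctly identified the two genuine obstructions to a general argument: the lack of any intrinsic mechanism for controlling the growth of $J_{rm}$ outside of explicit or toric/mirror-symmetric situations, and the impossibility of singling out $\hat{\Gamma}_X$ rather than $\hat{\Gamma}_X\Ch(E)$ without further input. So your write-up is a fair assessment of the state of affairs, but it is not a proof, and the paper does not claim one.
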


Note that the principal asymptotic class is defined and computed using information depending on the symplectic structure of the Fano manifold, e.g., the operator $c_1(X)\star_0$, and the $J$-function. If the Gamma conjecture I is true, then the $1$-dimensional subspace spanned by the principal asymptotic class, $\mathbb{C} \cdot A_X$, is in fact a topological invariant.

%% Riemann-Liouville integral

\subsection{Riemann-Liouville Integral}

% introduction of subsection: asmyptotics of Riemann-Liouville integral
The Riemann-Liouville integral, denoted as $I_\alpha \langle f \rangle$, extends the concept of the $n$-th repeated integral to cases where $n$ is a complex number $\alpha$.
In this work, we further generalize this concept to cohomology classes $\alpha$. For a cohomology class $\alpha$, let $\alpha_0$ represents its $H^0$ component.

\begin{definition}[Riemann-Liouville Integral]

Given a cohomology-valued continuous function $f\in C^0([0,\infty)) \otimes H^*(X)$ and a cohomology class $\alpha \in H^*(X)$ with $\Re \alpha_0 >0$, the \boldemph{Riemann-Liouville integral} of $f$ with respect to $\alpha$ is defined as:
\begin{equation*}
I_\alpha \langle f \rangle (t) \defeq \int_0^t f(x) \; \frac{(t-x)^{\alpha-1}}{\Gamma(\alpha)} \; dx.
\end{equation*}
Additionally, we define $I_0 \langle f \rangle (t) = f(t)$.

\end{definition}

% norm on cohomology ring
We introduce a norm on $H^*(X)$ defined as the operator norm. Specifically, we first fix an arbitrary norm on $H^*(X)$. Then, for $\alpha\in H^*(X)$, we define $\norm{\alpha}$ by:
\begin{equation*}
\norm{\alpha} \defeq \norm{\alpha \cup - : H^*(X) \rightarrow H^*(X)}_{\text{op}}.
\end{equation*}
The following proposition establishes some fundamental properties of the Riemann-Liouville integral:

\begin{proposition}\label{AsymRL}

Let $\alpha, \beta \in H^*(X)$, with $\Re \alpha_0, \Re \beta_0 >0$. Then, the Riemann-Liouville integrals of a function $f$ satisfy the following properties:
\begin{enumerate}[label=\normalfont (\arabic*)]
\item $I_\alpha \langle I_\beta \langle f \rangle \rangle = I_{\alpha+\beta} \langle f \rangle$,
\item $\sum_{m=0}^\infty I_{rm+\alpha} \langle 1 \rangle = \frac{1}{r} \sum_{k=0}^{r-1} I_\alpha \langle e^{\xi_r^k t} \rangle$, where $\xi_r=e^{\frac{2 \pi \mathtt{i}}{r}}$,
\item for $\Re \lambda>0$, $I_\alpha \langle e^{\lambda t} \rangle (t) = \frac{1}{\lambda^\alpha} e^{\lambda t} - \frac{1}{\lambda \Gamma(\alpha)} t^{\alpha-1} + \mathit{o} \left( \norm*{t^{\alpha-1}} \right)$ as $t\to +\infty$, where the argument of $\lambda$ is taken to be $\arg \lambda\in (-\frac{\pi}{2}, \frac{\pi}{2})$ in $\lambda^\alpha$,
\item for $\Re{\lambda} \leq 0$,  $I_\alpha \langle e^{\lambda t} \rangle (t) = \mathit{o}(t^m)$ as $t\to +\infty$ for some $m\in\mathbb{R}$.
\end{enumerate}

\end{proposition}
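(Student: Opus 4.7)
The plan is to handle the four claims in the order listed, since (2) invokes (1) and (4) largely mirrors the simple side of (3). Throughout, I will treat the cohomology-class power $t^{\alpha-1}$ via its definition as $t^{\alpha_0-1}\sum_{k}(\log t)^k\alpha_{\geq 2}^k/k!$ so that every integral reduces to a finite $\mathbb{C}$-linear combination of scalar integrals; the norm on $H^*(X)$ is sub-multiplicative with respect to $\cup$, so convergence estimates for scalar integrals upgrade to the required estimates on $\|\cdot\|$.

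For (1), the standard semigroup identity, I would write
\begin{equation*}
I_\alpha\langle I_\beta\langle f\rangle\rangle(t)=\int_0^t\!\!\int_0^x f(y)\,\frac{(x-y)^{\beta-1}}{\Gamma(\beta)}\,\frac{(t-x)^{\alpha-1}}{\Gamma(\alpha)}\,dy\,dx,
\end{equation*}
swap the order of integration, and substitute $x=y+s(t-y)$ on the inner integral. The $s$-integral becomes a Beta integral $\int_0^1 s^{\beta-1}(1-s)^{\alpha-1}ds$, whose value $\Gamma(\alpha)\Gamma(\beta)/\Gamma(\alpha+\beta)$ extends to cohomology classes by the multiplicative extension of $\Gamma$ from Proposition \ref{propGammaFunction}. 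For (2), induction on $m$ (or a direct computation using (1)) gives $I_{rm}\langle 1\rangle(t)=t^{rm}/\Gamma(rm+1)$, so by (1),
\begin{equation*}
\sum_{m=0}^\infty I_{rm+\alpha}\langle 1\rangle = I_\alpha\!\left\langle\sum_{m=0}^\infty\frac{t^{rm}}{(rm)!}\right\rangle;
\end{equation*}
the roots-of-unity identity $\sum_{m\geq 0}t^{rm}/(rm)! = r^{-1}\sum_{k=0}^{r-1}e^{\xi_r^k t}$ then yields the claim.

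Part (3) is the main obstacle because both the exponent of $t$ and the exponent of $\lambda$ are cohomology classes, and one must be careful that the error term really is $o(\|t^{\alpha-1}\|)$ rather than merely $O(t^{\Re\alpha_0-1})$. I would first substitute $u=t-x$ to get $I_\alpha\langle e^{\lambda t}\rangle(t)=\frac{e^{\lambda t}}{\Gamma(\alpha)}\int_0^t e^{-\lambda u}u^{\alpha-1}du$, and then split
\begin{equation*}
\int_0^t e^{-\lambda u}u^{\alpha-1}du=\int_0^\infty e^{-\lambda u}u^{\alpha-1}du-\int_t^\infty e^{-\lambda u}u^{\alpha-1}du.
\end{equation*}
The first integral equals $\Gamma(\alpha)/\lambda^\alpha$ by the cohomology-valued definition of $\Gamma$ together with the substitution $v=\lambda u$, which is valid precisely because $\arg\lambda\in(-\pi/2,\pi/2)$ allows the contour to be deformed back to the real axis. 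For the tail, one integration by parts (with $dv=e^{-\lambda u}du$) gives $\int_t^\infty e^{-\lambda u}u^{\alpha-1}du = \lambda^{-1}e^{-\lambda t}t^{\alpha-1}+\lambda^{-1}(\alpha-1)\int_t^\infty e^{-\lambda u}u^{\alpha-2}du$. Substituting back produces the stated main term $-t^{\alpha-1}/(\lambda\Gamma(\alpha))$, while the remaining tail, after multiplying by $e^{\lambda t}/\Gamma(\alpha)$, is bounded in norm by a constant times $t^{\Re\alpha_0-2}$ up to polynomial logarithmic factors, which is $o(\|t^{\alpha-1}\|)$ as $t\to+\infty$.

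Part (4) is an elementary bound: for $\Re\lambda\leq 0$ one has $|e^{\lambda x}|\leq 1$ for $x\geq 0$, so
\begin{equation*}
\bigl\|I_\alpha\langle e^{\lambda t}\rangle(t)\bigr\|\leq\int_0^t \frac{\bigl\|(t-x)^{\alpha-1}\bigr\|}{|\Gamma(\alpha_0)|\cdot\|\Gamma(\alpha)/\Gamma(\alpha_0)\|^{-1}}\,dx,
\end{equation*}
and the right-hand side is a polynomial in $t$ (with logarithmic factors) of degree at most $\Re\alpha_0$, hence $o(t^m)$ for any $m>\Re\alpha_0$. The case $\lambda=0$ can also be read off directly from $I_\alpha\langle 1\rangle(t)=t^\alpha/\Gamma(\alpha+1)$. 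The only delicate point in the whole proposition is verifying that the tail estimate in (3) is genuinely lower order than $\|t^{\alpha-1}\|$ as a cohomology-valued quantity, which follows because the logarithmic factors in both expressions have the same maximal degree ($=\dim X$) while the polynomial part strictly drops by one.
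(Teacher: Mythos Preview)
Your proposal is correct and follows essentially the same route as the paper for all four parts: the Beta-integral computation for (1), the same roots-of-unity identity via (1) for (2), the substitution $u=t-x$ followed by splitting $\int_0^t=\int_0^\infty-\int_t^\infty$ and integration by parts for (3), and the trivial bound $|e^{\lambda x}|\le 1$ for (4).

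The one cosmetic difference is in (3): you perform a single integration by parts and bound the remaining tail $\frac{1}{\lambda\Gamma(\alpha-1)}\int_t^\infty e^{-\lambda(u-t)}u^{\alpha-2}\,du$ directly, whereas the paper iterates integration by parts $L$ times (obtaining a full asymptotic expansion, more than is stated) and only estimates the remainder once $L>\Re\alpha_0-1$, so that the substitution $y=x/t$ gives an integrand dominated by a fixed integrable function. Your direct bound is legitimate---the remainder is $O\bigl(t^{\Re\alpha_0-2}(\log t)^{N}\bigr)$ while $\|t^{\alpha-1}\|\ge t^{\Re\alpha_0-1}$ (seen by applying the multiplication operator to a top-degree class)---but the paper's choice of large $L$ makes the estimate slightly cleaner because it avoids having to control a possibly growing integrand against the exponential. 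Either way the content is the same.
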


\begin{proof}
\; \\
\begin{enumerate}
\item To verify the first property, we compute the left-hand side directly:
\begin{align*}
I_\alpha \langle I_\beta \langle f \rangle \rangle (t) &= \int_0^t \int_0^x f(y) \; \frac{(x-y)^{\beta-1}}{\Gamma(\beta)} \; \frac{(t-x)^{\alpha-1}}{\Gamma(\alpha)} \; dy \; dx \\
&= \int_0^t \int_y^t f(y) \; \frac{(x-y)^{\beta-1}}{\Gamma(\beta)} \; \frac{(t-x)^{\alpha-1}}{\Gamma(\alpha)} \; dx \; dy \\
&= \int_0^t f(y) \; \frac{(t-y)^{\alpha+\beta-1}}{\Gamma(\alpha) \; \Gamma(\beta)} \int_y^t \left( \frac{x-y}{t-y} \right)^{\beta-1} \left( \frac{t-x}{t-y} \right)^{\alpha-1} \hspace{-0.5em} \frac{1}{t-y} \; dx \; dy \\
&= \int_0^t f(y) \; \frac{(t-y)^{\alpha+\beta-1}}{\Gamma(\alpha) \; \Gamma(\beta)} \int_0^1 \left( X \right)^{\beta-1} \; \left( 1-X \right)^{\alpha-1} \; dX \; dy. \\
& \hspace{20em} \left( X=\frac{x-y}{t-y} \right)
\end{align*}
Using the Beta function,
\begin{equation*}
\int_0^1 X^{\beta-1} \; (1-X)^{\alpha-1} \; dX = B(\beta,\alpha) = \frac{\Gamma(\alpha)\;\Gamma(\beta)}{\Gamma(\alpha+\beta)},
\end{equation*} 
we obtain $I_\alpha \langle I_\beta \langle f \rangle \rangle = I_{\alpha + \beta} \langle f \rangle$.

\item
Let $\xi_r=e^{\frac{2 \pi \mathtt{i}}{r}}$ denote the $r$-th root of unity. Then,
\begin{equation*}
\sum_{m=0}^\infty I_{rm+\alpha} \langle 1 \rangle = I_{\alpha} \left\langle \sum_{m=0}^\infty I_{rm} \langle 1 \rangle \right\rangle = I_{\alpha} \left\langle \sum_{m=0}^{\infty} \frac{1}{(rm)!} \; t^{rm} \right\rangle = \frac{1}{r} \sum_{k=0}^{r-1} I_\alpha \langle e^{\xi_r^k t} \rangle.
\end{equation*}

\item
When $\Re{\lambda} > 0$, we have
\begin{align*}
e^{-\lambda t} I_{\alpha}\langle e^{\lambda t} \rangle (t) &= e^{-\lambda t} \int_0^{t} e^{\lambda x} \; \frac{(t-x)^{\alpha-1}}{\Gamma(\alpha)} \; dx \\
&= \int_0^t e^{-\lambda(t-x)} \; \frac{(t-x)^{\alpha-1}}{\Gamma(\alpha)} \; dx \\
&= \int_0^t e^{-\lambda y} \; \frac{y^{\alpha-1}}{\Gamma(\alpha)} \; dy \to \frac{1}{\lambda^{\alpha}} \text{ as } t \to +\infty. \quad (y=t-x)
\end{align*}
Thus,
\begin{equation*}
I_{\alpha} \langle e^{\lambda t} \rangle = \frac{1}{\lambda^{\alpha}} \; e^{\lambda t} \cdot \left( 1+\mathit{o}(1) \right) \text{ as } t\to +\infty.
\end{equation*}
Applying integration by parts iteratively, we obtain the series:
\begin{align*}
I_{\alpha} \langle e^{\lambda t} \rangle (t) =& \int_0^t e^{\lambda x} \; \frac{(t-x)^{\alpha-1}}{\Gamma(\alpha)} \; dx \\
=& \frac{1}{\lambda^{\alpha}} \; e^{\lambda t} - e^{\lambda t} \int_t^\infty \hspace{-0.5em} e^{-\lambda x} \; \frac{x^{\alpha-1}}{\Gamma(\alpha)} \; dx \\
=& \frac{1}{\lambda^{\alpha}} \; e^{\lambda t} - e^{\lambda t} \left( \frac{1}{\lambda} \; e^{-\lambda t} \; \frac{t^{\alpha-1}}{\Gamma(\alpha)} + \int_t^\infty \frac{1}{\lambda} \; e^{-\lambda x} \; \frac{x^{\alpha-2}}{\Gamma(\alpha-1)} \; dx \right) \\
=& \frac{1}{\lambda^{\alpha}} \; e^{\lambda t} - \sum_{\ell=1}^{L} \frac{1}{\lambda^\ell} \; \frac{t^{\alpha-\ell}}{\Gamma(1+\alpha-\ell)} - \int_t^\infty \frac{1}{\lambda^L} \; e^{-\lambda (x-t)} \; \frac{x^{\alpha-(L+1)}}{\Gamma(\alpha-L)} \; dx.
\end{align*}
For $L$ such that $L> \Re{\alpha_0}-1$,
\begin{align*}
 & \norm*{t^{-(\alpha-L)} \int_t^\infty \frac{1}{\lambda^L} \; e^{-\lambda (x-t)} \; \frac{x^{\alpha-(L+1)}}{\Gamma(\alpha-L)} \; dx} \\
\leq& \int_t^\infty \hspace{-0.5em} e^{-(\Re{\lambda})(x-t)} \; \norm*{ \; t^{-(\alpha-L)} \; \frac{x^{\alpha-(L+1)}}{\Gamma(\alpha-L)} \; } \; dx \\
\leq& \int_1^\infty \hspace{-0.5em} e^{-t(\Re{\lambda})(y-1)} \; \norm*{ \; \frac{y^{\alpha-(L+1)}}{\Gamma(\alpha-L)} \; } \; dy \to 0 \text{ as } t \to +\infty. \quad (y=x/t)
\end{align*}

Therefore, We have the asymptotic series as $t\to +\infty$, for sufficiently large $L$,
\begin{equation*}
I_\alpha \langle e^{\lambda t} \rangle (t) = \frac{1}{\lambda^\alpha} \; e^{\lambda t} - \left( \sum_{\ell=1}^L \frac{1}{\Gamma(\alpha-\ell+1) \lambda^\ell} \; t^{\alpha-\ell} + \mathit{o} \left( \norm*{t^{\alpha-L}} \right) \right).
\end{equation*}
In particular, this implies the conclusion 
\begin{equation*}
I_\alpha \langle e^{\lambda t} \rangle (t) = \frac{1}{\lambda^\alpha} \; e^{\lambda t} - \frac{1}{\lambda \; \Gamma(\alpha)} \; t^{\alpha-1} + \mathit{o} \left( \norm*{t^{\alpha-1}} \right).
\end{equation*}

\item
When $\Re{\lambda} \leq 0$, we have
\begin{align*}
\norm*{ \; I_{\alpha} \langle e^{\lambda t} \rangle (t) \; } =& \norm*{ \; \int_0^t e^{\lambda x} \; \frac{(t-x)^{\alpha-1}}{\Gamma(\alpha)} \; dx \; } \\
\leq& \int_0^t e^{(\Re{\lambda}) x} \; \norm*{ \; \frac{(t-x)^{\alpha-1}}{\Gamma(\alpha)} \; } \; dx \\
\leq& \int_0^t \norm*{ \; \frac{(t-x)^{\alpha-1}}{\Gamma(\alpha)} \; } \; dx = \mathit{o}(t^m) \text{ as } t\to +\infty \text{ for some } m\in\mathbb{R}. \qedhere
\end{align*}
\end{enumerate}
\end{proof}
}

% \newpage

\section{Asymptotically Mittag-Leffler}\label{sec:DisGammaI}
{
%% introduction: Asymptotically Mittag-Leffler: How can we practically compute the principal asymptotic class (and the asymptotic rate)? 

This section explores the relationship between $t^{\frac{1}{2} \dim}J(t)$ and the Mittag-Leffler function, a generalization of the exponential function defined by: 
\begin{equation*}
E_{\alpha, \beta}(z) = \sum_{m=0}^\infty \frac{1}{\Gamma \left( \alpha m+\beta \right)} \; z^n.
\end{equation*}
For positive real $\alpha$, a suitably modified Mittag-Leffler function exhibits the following asymptotic behaviour as $t\to +\infty$ {\cite[Chapter~4]{MR4179587}}:
\begin{equation*}
t^{\beta} E_{\alpha, 1+\beta}(t^\alpha) = \sum_{m=0}^\infty \frac{1}{\Gamma \left( 1+\alpha m+\beta \right)} \; t^{\alpha m + \beta} = \frac{1}{\alpha} \; e^{t} \cdot (1+\mathit{o}(1)).
\end{equation*}
On the other hand, Proposition \ref{propContAsym} establish that, under the condition that the linear operator $c_1\star_0$ has a simple rightmost eigenvalue, $t^{\frac{1}{2}\dim} J(t)$ displays a similar asymptotic form as $t\to +\infty$:
\begin{equation*}
t^{\frac{1}{2} \dim} J(t) = \sum_{m=0}^\infty J_{rm} \; t^{rm+\frac{1}{2}\dim+c_1} =  e^{Tt} \cdot \left( A + \mathit{o}(1) \right). \tag{$\star$} \label{eq:eqContAsymJ}
\end{equation*}
Motivated by the resemblance of these asymptotic expressions, we propose that for a broad class of Fano manifolds, the coefficients $J_{rm}$ of the series expansion of $t^{\frac{1}{2}\dim} J(t)$ satisfy the following asymptotic equivalance as $m\to \infty$:
\begin{equation*}
J_{rm} = \frac{T^{rm+\frac{1}{2} \dim X+c_1(X)}}{\Gamma \left( 1+rm+\frac{1}{2} \dim X+c_1(X) \right)} \cdot \left( A + \mathit{o}(1) \right). \tag{$\star\star$} \label{eq:eqDisAsymJ}
\end{equation*}
We prove that condition $\eqref{eq:eqDisAsymJ}$ implies condition $\eqref{eq:eqContAsymJ}$. Consquently, when $c_1 \star_0$ has a simple rightmost eigenvalue, the factor $A$ in $\eqref{eq:eqContAsymJ}$ is automatically a principal asymptotic class. This provides an alternative method for determining the principal asymptotic class $A$ for Fano manifold. Finally, we illustrate the computation of the principal asymptotic class for projective spaces.

% definition of aML

\subsection{Definition}

We define a property for general cohomology-valued power series, based on their coefficients, that captures the similarity between these coefficients and those of the Mittag-Leffler function.

\begin{definition}[Asymptotically Mittag-Leffler]\label{defaML}

Let $X$ be a smooth manifold. Consider a cohomology-valued power series in $t$ of the form:
\begin{equation*}
\alpha (t) = \sum_{m=0}^\infty \alpha_{rm} t^{rm+\beta}
\end{equation*}
where $\alpha_{rm}, \beta \in H^*(X)$, and $r\in\mathbb{N}_{>0}$. 
We say that $\alpha(t)$ is \boldemph{$(T,\theta,A)$-scaled asymptotically Mittag-Leffler} ($(T,\theta,A)$-aML) for $T \in \mathbb{R}_{>0}$, $\theta\in\mathbb{R}$ and $A \in H^*(X)$ if the coefficients $\alpha_{rm}$ satisfy the asymptotic equivalence as $m \to \infty$:
\begin{equation*}\label{TotalDisAsymp}
\alpha_{rm} = \frac{T^{rm+\beta}e^{\mathtt{i}\theta(rm+\beta)}}{\Gamma \left( 1+rm+\beta \right)} \cdot \left(A + \mathit{o}(1) \right).
\end{equation*}
When $\theta=0$, we may omit $\theta$ and say a series is $(T,A)$-scaled asymptotically Mittag-Leffler.

\end{definition}

If $\alpha(t)$ is $(T,\theta,A)$-scaled asymptotically Mittag-Leffler, then its leading term resembles a modified Mittag-Leffler function:
\begin{equation*}
\sum_{m=0}^\infty \frac{1}{\Gamma(1+rm+\beta)} (Te^{\mathtt{i}\theta} t)^{rm+\beta} A=(Te^{\mathtt{i}\theta}t)^{\beta} \; E_{r,1+\beta} \left( (Te^{\mathtt{i}\theta}t)^r \right) \; A.
\end{equation*}
We can further refine this condition by considering an asymptotic expansion with more terms.  Instead of simply $A + \mathit{o}(1)$, wa can examine an expansion of the form $A_0 + A_1 m^{-1} + \ldots + A_L m^{-L} + \mathit{o}(m^{-L})$ as $m\to \infty$, i.e.,
\begin{equation*}
\alpha_{rm} = \frac{T^{rm+\beta}e^{\mathtt{i}\theta(rm+\beta)}}{\Gamma \left( 1+rm+\beta \right)} \cdot \left(\sum_{\ell=0}^L A_\ell m^{-\ell} + \mathit{o}(m^{-L}) \right).
\end{equation*}
Direct verification reveals the following fundamental properties of asymptotically Mittag-Leffler series:

\begin{proposition}[Basic Properties of Asymptotically Mittag-Leffler Series]\label{propAML}

Let 
\begin{equation*}
\alpha(t) = \sum_{m=0}^\infty \alpha_{rm} \: t^{rm+\beta}
\end{equation*}
be a cohomology-valued series that is $(T, \theta, A)$-scaled asymptotically Mittag-Leffler ($(T,\theta,A)$-aML). Then:
\begin{enumerate}[label=\normalfont \arabic*.]
\item $\alpha(t)$ is also $(T,\theta+2\pi \frac{k}{r},e^{-2\pi\frac{k}{r}\beta} A)$-aML, for all integers $k\in\mathbb{Z}$, in particular, $(T, \theta, A)$-aML is equivalent to $(T, \theta+2\pi, e^{-2\pi\beta} A)$-aML (branch shift), 
\item if $\alpha(t)$ is also $(T',\theta',A')$-aML, $A \neq 0$, and $A' \neq 0$, then $T'=T$, $\theta'=\theta+2\pi\frac{k}{r}$, and $A'= e^{-2\pi \frac{k}{r} \beta} A$ for some $k\in\mathbb{Z}$,
\item $\alpha'(t)$ is $(T, \theta, Te^{\mathtt{i}\theta}A)$-aML,
\item the tail $\sum_{m=M}^\infty \alpha_{rm} \: t^{rm+\beta}=\sum_{m=0}^\infty \alpha_{r(m+M)} \: t^{rm+(rM+\beta)}$ is $(T, \theta, A)$-aML.
\end{enumerate}
However, $t \cdot \alpha(t)$ is not asymptotically Mittag-Leffler for any scaling.

\end{proposition}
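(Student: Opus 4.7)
The plan is to verify each of the first four properties by direct manipulation of the defining asymptotic equivalence in Definition \ref{defaML}, using only the functional identities of the cohomology-valued Gamma function from Proposition \ref{propGammaFunction}. The negative final claim then reduces to a Gamma-ratio mismatch showing that the shift $\beta \mapsto \beta + 1$ induced by multiplication by $t$ introduces a linear-in-$m$ discrepancy that no rescaling can absorb.

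Property 1 (branch shift) follows from substituting $\theta \mapsto \theta + 2\pi k/r$ into the factor $e^{\mathtt{i}\theta(rm+\beta)}$: since $km \in \mathbb{Z}$, the extra factor is $e^{2\pi \mathtt{i} km}\, e^{2\pi \mathtt{i} k\beta/r} = e^{2\pi \mathtt{i} k\beta/r}$, a unit in $H^*(X)$ that absorbs into the leading class. Property 4 (tail) is pure reindexing: $r(m+M)+\beta = rm + (rM+\beta)$, so the asymptotic equivalence with $\tilde\beta = rM + \beta$ holds term-by-term. Property 3 (derivative) starts from $\alpha'(t) = \sum_m (rm+\beta)\alpha_{rm}\, t^{rm+\beta-1}$ and uses the recursion $(rm+\beta)/\Gamma(1+rm+\beta) = 1/\Gamma(1+rm+(\beta-1))$ from Proposition \ref{propGammaFunction}(1); factoring one copy of $T e^{\mathtt{i}\theta}$ out of $T^{rm+\beta}\, e^{\mathtt{i}\theta(rm+\beta)}$ identifies the new leading class as $T e^{\mathtt{i}\theta} A$.

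Property 2 (uniqueness) is where the first bit of care is required. Equating the two asymptotic descriptions of $\alpha_{rm}$ and cancelling the common $\Gamma(1+rm+\beta)$ yields
\begin{equation*}
(T/T')^{rm+\beta}\, e^{\mathtt{i}(\theta-\theta')(rm+\beta)}\, (A + \mathit{o}(1)) = A' + \mathit{o}(1).
\end{equation*}
Taking the norm of both sides and letting $m \to \infty$, the scalar factor $(T/T')^{rm}$ must remain bounded and bounded away from zero because $A, A' \neq 0$, forcing $T = T'$. The remaining unit-modulus scalar $e^{\mathtt{i}(\theta-\theta')rm}$ must then converge as $m \to \infty$, which requires $r(\theta-\theta') \in 2\pi\mathbb{Z}$. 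Writing $\theta'-\theta = 2\pi k/r$ and passing to the limit identifies $A'$ with the branch-shifted class described in Property 1.

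The main obstacle is the final negative statement, which implicitly requires $A' \neq 0$ (otherwise the condition is vacuously satisfied once $T'$ is taken large enough). Writing $t\alpha(t) = \sum_m \alpha_{rm}\, t^{rm + (\beta+1)}$, a hypothetical $(T',\theta',A')$-aML scaling would force
\begin{equation*}
\alpha_{rm} \sim \frac{(T')^{rm+\beta+1}\, e^{\mathtt{i}\theta'(rm+\beta+1)}}{\Gamma(2+rm+\beta)}\, A'.
\end{equation*}
Comparing this with the original asymptotic of $\alpha_{rm}$ and using $\Gamma(2+rm+\beta) = (1+rm+\beta)\Gamma(1+rm+\beta)$, consistency of the two expressions demands that $(1+rm+\beta)\, (T/T')^{rm+\beta+1}\, e^{\mathtt{i}(\theta-\theta')(rm+\beta+1)}\, A'$ be asymptotic to a fixed nonzero multiple of $A$. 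The polynomial factor $1+rm+\beta$ grows linearly, while the exponential factor $(T/T')^{rm}$ is either constant ($T=T'$), decays exponentially ($T<T'$), or diverges ($T>T'$); in no case can the product stabilize at a finite nonzero limit. This contradiction completes the proof.
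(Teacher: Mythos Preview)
Your proof is correct and follows exactly the route the paper intends: the paper offers no argument beyond the sentence ``Direct verification reveals the following fundamental properties,'' and your write-up supplies precisely that direct verification using only Definition~\ref{defaML} and Proposition~\ref{propGammaFunction}.

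Two minor remarks. In the final paragraph, the displayed comparison has a small bookkeeping slip: after cancelling $\Gamma(1+rm+\beta)$ one obtains
\[
(1+rm+\beta)\,(T/T')^{rm+\beta}\,(T')^{-1}\,e^{\mathtt{i}(\theta-\theta')(rm+\beta)-\mathtt{i}\theta'}\bigl(A+\mathit{o}(1)\bigr)=A'+\mathit{o}(1),
\]
so it is $A$, not $A'$, that carries the linear-times-exponential prefactor (equivalently, in your arrangement the factor $(1+rm+\beta)$ belongs in the denominator). This is harmless, since your actual argument only uses that a linear factor in $m$ multiplied by $(T/T')^{rm}$ can never converge to a finite nonzero limit, and that reasoning is unaffected. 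Second, your observation that the final negative claim tacitly requires $A'\neq 0$ is well taken; the same reading also requires $A\neq 0$ (otherwise $\alpha_{rm}=0$ furnishes a trivial counterexample), and the paper indeed leaves both assumptions implicit.
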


% total discrete asymptotic series to total continuous asymptotic series

\subsection{Properties}

We now prove a general result concerning the asymptotic behaviour of an asymptotically Mittag-Leffler series $\alpha(t)$ as $t\to e^{\mathtt{i}\phi} \cdot \infty$ for some $\phi\in\mathbb{R}$. 

\begin{theorem}\label{thmDisTotalErrortoContTotalError}

Let $X$ be a smooth manifold. Consider a cohomology-valued power series in $t$ of the form:
\begin{equation*}
\alpha (t) = \sum_{m=0}^\infty \alpha_{rm} t^{rm+\beta},
\end{equation*}
where $\alpha_{rm}, \beta \in H^*(X)$, and $r\in\mathbb{N}_{>0}$. Assume $\alpha(t)$ is $(T, \theta, A)$-scaled asymptotically Mittag-Leffler for some $T\in\mathbb{R}_{>0}$, $\theta\in\mathbb{R}$ and $A\in H^*(X)$ (Definition \ref{defaML}). Let $\phi\in\mathbb{R}$. Then $\alpha (t)$ exhibits the following asymptotic equivalence as $t \to e^{\mathtt{i}\phi} \cdot \infty$, 
\begin{equation*}
\alpha (t) = 
\begin{dcases*} 
\frac{1}{r} \; e^{\mathtt{i}(\theta+\phi)\beta} \; e^{T\abs{t}} \cdot \left( A + \mathit{o}(1) \right) & when $\theta+\phi \in \frac{2\pi}{r}\mathbb{Z}$, \\
e^{T\abs{t}} \cdot \mathit{o}(1) & otherwise. 
 \end{dcases*}
\end{equation*}
Moreover, if the coefficients $\alpha_{rm}$ admit the asymptotic series as $m\to\infty$:
\begin{equation*}
\alpha_{rm}=\frac{T^{rm+\beta} e^{\mathtt{i}\theta(rm+\beta)}}{\Gamma(1+rm+\beta)} \cdot \left( \sum_{\ell=0}^L A_\ell m^{-\ell} + \mathit{o}(m^{-L}) \right)
\end{equation*}
for some $A_0(=A), \ldots, A_L \in H^*(X)$, and $L\in\mathbb{N}$, then $\alpha (t)$ possesses the following asymptotic series as $t\to e^{\mathtt{i}\phi} \cdot \infty$:
\begin{equation*}
\alpha (t) = 
\begin{dcases*} 
e^{T\abs{t}} \cdot \left( \sum_{\ell=0}^L \frac{1}{r} \; e^{\mathtt{i}(\theta+\phi)\beta} \; B_\ell \; (T\abs{t})^{-\ell} + \mathit{o} \left( \abs{t}^{-L} \right) \right) & when $\theta+\phi \in \frac{2\pi}{r}\mathbb{Z}$, \\
e^{T\abs{t}} \cdot \mathit{o} \left( \abs{t}^{-L} \right) & otherwise, 
 \end{dcases*}
\end{equation*}
for some $B_0(=A), \ldots, B_L \in H^*(X)$.

\end{theorem}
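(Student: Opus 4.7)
The strategy is to split $\alpha(t) = \alpha^{\mathrm{id}}(t) + \alpha^{\mathrm{err}}(t)$, where $\alpha^{\mathrm{id}}$ is obtained by replacing $\alpha_{rm}$ with its Mittag--Leffler model $\frac{T^{rm+\beta}e^{\mathtt{i}\theta(rm+\beta)}}{\Gamma(1+rm+\beta)}A$ and $\alpha^{\mathrm{err}}$ collects the coefficient discrepancies. Writing $\alpha_{rm} = \frac{T^{rm+\beta}e^{\mathtt{i}\theta(rm+\beta)}}{\Gamma(1+rm+\beta)}(A+\eta_m)$ with $\norm{\eta_m}\to 0$, and setting $s=\abs{t}$ and $t = e^{\mathtt{i}\phi}s$, one obtains
\begin{equation*}
\alpha^{\mathrm{id}}(t) = A \sum_{m=0}^\infty \frac{(Te^{\mathtt{i}(\theta+\phi)}s)^{rm+\beta}}{\Gamma(1+rm+\beta)},
\end{equation*}
and an analogous expression for $\alpha^{\mathrm{err}}(t)$ with $\eta_m$ in place of $A$.

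The main input is a complex-variable extension of Proposition~\ref{AsymRL}(2). A direct termwise computation using the Beta integral $\int_0^1\tau^j(1-\tau)^{\beta-1}d\tau = \Gamma(j+1)\Gamma(\beta)/\Gamma(j+1+\beta)$ and the root-of-unity filter $\frac{1}{r}\sum_{k=0}^{r-1}\xi_r^{kj} = [r\mid j]$ yields, for complex $w$ with a fixed branch of $w^\beta$,
\begin{equation*}
\sum_{m=0}^\infty \frac{w^{rm+\beta}}{\Gamma(1+rm+\beta)} = \frac{1}{r}\sum_{k=0}^{r-1} w^\beta \int_0^1 e^{\xi_r^k w\tau}\frac{(1-\tau)^{\beta-1}}{\Gamma(\beta)}\,d\tau.
\end{equation*}
Applying this with $w = Te^{\mathtt{i}(\theta+\phi)}s$, the exponent $\xi_r^k w$ has real part $Ts\cos(\theta+\phi+2\pi k/r)$. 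Iterating the integration-by-parts argument used in the proof of Proposition~\ref{AsymRL}(3)--(4) along the straight-line contour from $0$ to $\xi_r^k w$ shows that the $k$-th integral is asymptotic to $(w/\xi_r^k w)^\beta e^{\xi_r^k w}$ when $\Re(\xi_r^k w) \to +\infty$, and of order $\mathit{O}(\abs{w}^{\Re\beta_0})$ when $\Re(\xi_r^k w) \le 0$. The cosine achieves its maximum value $1$ iff $\theta+\phi \in \frac{2\pi}{r}\mathbb{Z}$: in the resonant case the unique $k_0$ with $\xi_r^{k_0}w = Ts > 0$ produces the leading $e^{T\abs{t}}$ with prefactor $w^\beta/(Ts)^\beta = e^{\mathtt{i}(\theta+\phi)\beta}$, while all other $k$ are strictly exponentially subdominant; in the non-resonant case every $k$ is subdominant.

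The error $\alpha^{\mathrm{err}}(t)$ is dispatched by an $\varepsilon$-$M$ split: for every $\varepsilon>0$, pick $M$ with $\norm{\eta_m}<\varepsilon$ for all $m \ge M$; the head $\sum_{m<M}$ is a polynomial in $s$, whereas the tail is bounded in norm by $\varepsilon$ times the reference Mittag--Leffler series, hence $\mathit{O}(\varepsilon e^{T\abs{t}})$ in the resonant case and $\mathit{o}(e^{T\abs{t}})$ in the non-resonant one. Letting $\varepsilon\to 0$ yields the claimed $\mathit{o}(e^{T\abs{t}})$ bound. For the refined asymptotic expansion, iterating the complex integration-by-parts gives the full expansion $\tilde I_\beta\langle e^y\rangle(z) = e^z - \sum_{\ell=1}^L z^{\beta-\ell}/\Gamma(1+\beta-\ell) + \mathit{o}(\norm{z^{\beta-L}})$; this combines with the refined coefficient expansion $\sum_\ell A_\ell m^{-\ell}$ by translating each $m^{-\ell}$ into a $(T\abs{t})^{-\ell}$ correction at the effective saddle $rm\sim T\abs{t}$ (rigorously via Laplace's method applied to the integral representation $m^{-\ell} = \frac{1}{(\ell-1)!}\int_0^1 u^{m-1}(\log(1/u))^{\ell-1}\,du$, or via Stirling expansion of $\Gamma(1+rm+\beta)$ and resummation). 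The principal obstacle I anticipate is the clean complex-variable extension of Proposition~\ref{AsymRL}(3) with its full asymptotic series -- the branch-tracking for $w^\beta$ and the tail-integral estimate along a generic complex ray must be set up uniformly in direction -- together with, for the refined expansion, the bookkeeping required to assemble the $B_\ell$ as explicit linear combinations of $A_0,\ldots,A_\ell$ corrected by the Stirling expansion of $\Gamma(1+rm+\beta)$.
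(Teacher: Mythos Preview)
Your strategy matches the paper's: split into model plus error, represent the model via the root-of-unity filter and the Riemann--Liouville/Beta integral, extract the exponential asymptotics by integration by parts, and dispatch the error by an $\varepsilon$--$M$ argument. Two simplifications in the paper's execution are worth noting, and they dissolve exactly the obstacles you anticipate. First, the paper never extends Proposition~\ref{AsymRL} to complex rays: writing $t=e^{\mathtt{i}\phi}s$ and $\tau=Ts$, the phase $e^{\mathtt{i}(\theta+\phi)}$ is absorbed into the exponent, so one works with $I_{\beta+\ell}\langle e^{\xi_r^k e^{\mathtt{i}(\theta+\phi)}\tau}\rangle(\tau)$ for \emph{real} $\tau\to+\infty$, and Proposition~\ref{AsymRL}(3)--(4) applies verbatim with no branch-tracking. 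Second, for the refined expansion the paper avoids Laplace's method and Stirling bookkeeping entirely: Proposition~\ref{propGammaFunction}(3) gives $\frac{m^{-\ell}}{\Gamma(1+rm+\beta)} = \frac{r^\ell + O(m^{-1})}{\Gamma(1+rm+\beta+\ell)}$, so each $A_\ell m^{-\ell}$ is rewritten as $\frac{B_\ell}{\Gamma(1+rm+\beta+\ell)}$ for suitable $B_\ell$ (with $B_0=A_0$), after which the $\ell$-th leading term is simply $B_\ell\,\tau^{-\ell}$ times a shifted Mittag--Leffler sum, and the remainder is exactly $\tau^{-L}\,I_{\beta+L}\langle\rho\rangle(\tau)$ for an entire $\rho$ satisfying $e^{-\tau}\rho(\tau)\to 0$; a direct integral estimate then gives $e^\tau\cdot o(\tau^{-L})$. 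The paper also first reduces to the case $\Re\beta_0>0$ (so that all the $I_{\beta+\ell}$ integrals are honestly defined) and recovers general $\beta$ at the end by splitting off finitely many initial terms via Proposition~\ref{propAML}(4); you should include this reduction.
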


\begin{proof}
We first consider the case when $\Re(\beta_0)>0$, that is, when the $H^0$ component of $\beta$ has a positive real part. We then extended the result to general $\beta\in H^*(X)$.

By Proposition \ref{propGammaFunction} (3), 
\begin{equation*}
\frac{1}{\Gamma(1+rm+\beta)}m^{-\ell} = \frac{1}{\Gamma(1+rm+\beta+\ell)} \cdot \left( r^\ell + \mathit{O}(m^{-1}) \right). 
\end{equation*}
Thus the asymptotic series of $\alpha_{rm}$ can be rewritten as
\begin{align*}
\alpha_{rm} &= \frac{T^{rm+\beta} e^{\mathtt{i}\theta(rm+\beta)}}{\Gamma(1+rm+\beta)} \cdot \left( \sum_{\ell =0}^L A_\ell m^{-\ell} + \mathit{o}(m^{-L}) \right) \\
&= \sum_{\ell =0}^{L} \frac{T^{rm+\beta} e^{\mathtt{i}\theta(rm+\beta)} B_\ell}{\Gamma(1+rm+\beta+\ell)} + \mathit{o} \left( \frac{T^{rm+\beta} e^{\mathtt{i}\theta(rm+\beta)}}{\Gamma(1+rm+\beta+L)} \right).
\end{align*}
Note that $B_0 = A_0 = A$. In other words, as $m \to \infty$, we have
\begin{equation*}
\alpha_{rm} = \sum_{\ell =0}^L \frac{T^{rm+\beta} e^{\mathtt{i}\theta(rm+\beta)} B_\ell}{\Gamma(1+rm+\beta+\ell)} + \frac{T^{rm+\beta} e^{\mathtt{i}\theta(rm+\beta)} R_L(m)}{\Gamma(1+rm+\beta+L)}
\end{equation*}
for some $L\in \mathbb{N}$, $R_L(m)\in H^*(X)$, with $\lim_{m\to \infty} R_L(m)=0$. Let $\tau=T\abs{t}$. Then $\alpha(t)$ can be expressed as 
\begin{equation*}
\alpha(t) = \underbrace{\sum_{m=0}^\infty \sum_{\ell =0}^L \frac{e^{\mathtt{i}(\theta+\phi)(rm+\beta)} B_\ell}{\Gamma(1+rm+\beta+\ell)} \tau^{rm+\beta}}_{\defeq L(\tau)} + \underbrace{\sum_{m=0}^\infty \frac{e^{\mathtt{i}(\theta+\phi)(rm+\beta)} R_L(m)}{\Gamma(1+rm+\beta+L)} \tau^{rm+\beta}}_{\defeq R(\tau)}.
\end{equation*}

The leading term of $\alpha(t)$ is given by
\begin{align*}
L(\tau) =& \sum_{m=0}^\infty \sum_{\ell =0}^L \frac{e^{\mathtt{i}(\theta+\phi)(rm+\beta)} B_\ell}{\Gamma(1+rm+\beta+\ell)} \tau^{rm+\beta} \\
=& \sum_{\ell =0}^L B_\ell \; \tau^{-\ell} \sum_{m=0}^{\infty} \frac{e^{\mathtt{i}(\theta+\phi)(rm+\beta)}}{\Gamma(1+rm+\beta+\ell)} \tau^{rm+\beta+\ell} \\
=& \sum_{\ell =0}^L e^{\mathtt{i}(\theta+\phi)\beta} \; B_\ell \; \tau^{-\ell} \sum_{m=0}^{\infty} e^{\mathtt{i}(\theta+\phi)rm} I_{rm+\beta+\ell} \langle 1 \rangle (\tau) \\
=& \sum_{\ell =0}^L e^{\mathtt{i}(\theta+\phi)\beta} \; B_\ell \; \tau^{-\ell} \; I_{\beta+\ell} \left\langle \sum_{m=0}^{\infty} \frac{e^{\mathtt{i}(\theta+\phi)rm}}{(rm)!}\tau^{rm} \right\rangle (\tau) \\
=& \sum_{\ell =0}^L e^{\mathtt{i}(\theta+\phi)\beta} \; B_\ell \; \tau^{-\ell} \; \frac{1}{r} \; \sum_{k=0}^{r-1} I_{\beta+\ell} \left\langle e^{\xi_{r}^k e^{\mathtt{i}(\theta+\phi)}\tau} \right\rangle (\tau).
\end{align*}
By Proposition \ref{AsymRL}, if $\Re(\xi_{r}^k e^{\mathtt{i}(\theta+\phi)}) \leq 0$ for all $k\in\mathbb{Z}$, then as $\tau \to +\infty$,
\begin{equation*}
\sum_{k=0}^{r-1} I_{\beta+\ell} \left\langle e^{\xi_{r}^k e^{\mathtt{i}(\theta+\phi)}\tau} \right\rangle (\tau) = \mathit{o}(\tau^m)
\end{equation*}
for some real number $m$.
If there exists $k\in\mathbb{Z}$ such that $\Re(\xi_{r}^k e^{\mathtt{i}(\theta+\phi)}) > 0$, let $k_0\in\mathbb{Z}$ be such that $\Re(\xi_{r}^k e^{\mathtt{i}(\theta+\phi)})$ attains  maximum among all $k\in\mathbb{Z}$, and $\frac{2\pi}{r}k_0+\theta+\phi \in (-\frac{\pi}{2},\frac{\pi}{2})$. Then as $\tau \to +\infty$, 
\begin{equation*}
\sum_{k=0}^{r-1} I_{\beta+\ell} \left\langle e^{\xi_{r}^k e^{\mathtt{i}(\theta+\phi)}\tau} \right\rangle (\tau) = (\xi_{r}^{k_0} e^{\mathtt{i}(\theta+\phi)})^{-(\beta+\ell)} \; e^{\xi_{r}^{k_0} e^{\mathtt{i}(\theta+\phi)}\tau} \cdot \left( 1+\mathit{o}(e^{-c \tau}) \right),
\end{equation*}
where $\xi_r=e^{\frac{2 \pi \mathtt{i}}{r}}$, and $c>0$ is some constant depending on $\theta+\phi$, independent of $\ell$.
Therefore, in particular, we have the following asymptotics of $L(\tau)$ as $\tau\to+\infty$:
\begin{align*}
L(\tau) &= \sum_{\ell =0}^L e^{\mathtt{i}(\theta+\phi)\beta} \; B_\ell \; \tau^{-\ell} \; \frac{1}{r} \; \sum_{k=0}^{r-1} I_{\beta+\ell} \left\langle e^{\xi_{r}^k e^{\mathtt{i}(\theta+\phi)}\tau} \right\rangle (\tau) \\
&=
\begin{dcases*} 
e^{\tau} \cdot \left( \sum_{\ell=0}^L \frac{1}{r} \; e^{\mathtt{i}(\theta+\phi)\beta} \; B_\ell \; (\tau)^{-\ell} + \mathit{o} \left( e^{-c \tau} \right) \right) & when $\theta+\phi \in \frac{2\pi}{r}\mathbb{Z}$, \\
e^{\tau} \cdot \mathit{o} \left( e^{-c \tau} \right) & otherwise. 
 \end{dcases*}
\end{align*}
for some constant $c>0$ depending on $\theta+\phi$.

We now estimate the remaining term $R(\tau)$. Set 
\begin{equation*}
R'_{m}=e^{\mathtt{i}(\theta+\phi)(rm+\beta)} R_L(m), \quad \rho(\tau)=\sum_{m=0}^\infty \frac{R'_L(m)}{(rm)!} (\tau)^{rm}.
\end{equation*}
Since $\lim_{m\to \infty}R'_L(m)=\lim_{m\to \infty} e^{\mathtt{i}(\theta+\phi)\beta} R_L(m)=0$, $\rho(t)$ is an entire function. We can then express the remaining term $R(t)$ using $\rho(t)$ as: 
\begin{equation*}
R(\tau) = \sum_{m=0}^\infty \frac{e^{\mathtt{i}(\theta+\phi)(rm+\beta)} R_L(m)}{\Gamma(1+rm+\beta+L)} \tau^{rm+\beta} = \tau^{-L} \; I_{\beta+L}\langle \rho \rangle (\tau).
\end{equation*}
For $y \geq 1$, we have
\begin{equation*}
\norm*{y^{\beta}} \leq \sum_{k=0}^{\infty} \frac{1}{k!} \norm*{ (\log y)^k \beta^k } \leq \sum_{k=0}^{\infty} \frac{1}{k!} (\log y)^k \norm*{\beta}^k = y^{\norm*{\beta}}.
\end{equation*}
Since $\lim_{m\to \infty}R'_L(m)=0$, we have $\lim_{\tau\to +\infty} e^{-\tau} \, \rho(\tau)=0$, i.e.,
\begin{equation*}
\forall \varepsilon >0, \; \exists \tau_0 >0 \text{ such that } \; \forall \tau>\tau_0, \text{ we have } \; \norm*{\rho(\tau)} < \varepsilon \: e^{\tau}.
\end{equation*}
We compute
\begin{align*}
 & \norm*{e^{-\tau} \: \Gamma \left( \beta+L \right) \: I_{\beta+L} \langle \rho \rangle (\tau)} \\
=& \norm*{e^{-\tau} \int_0^\tau \rho(x) (\tau-x)^{\beta+L-1} dx } \\
\leq & e^{-\tau} \int_0^\tau \norm*{\rho(x) (\tau-x)^{\beta+L-1}} dx \\
< & e^{-\tau} \int_0^{\tau_0} \norm*{\rho(x) (\tau-x)^{\beta+L-1}} dx + \varepsilon e^{-\tau} \int_{\tau_0}^\tau e^{x} \norm*{(\tau-x)^{\beta+L-1}} dx \\
= & e^{-\tau} \int_0^{\tau_0} \norm*{\rho(x) (\tau-x)^{\beta+L-1}} dx + \varepsilon \int_{0}^{\tau-\tau_0} e^{-y} \norm*{y^{\beta+L-1}} dy \quad (y=\tau-x) \\
= & e^{-\tau} \int_0^{\tau_0} \norm*{\rho(x) (\tau-x)^{\beta+L-1}} dx + \begin{aligned}[t] & \varepsilon \int_{0}^{1} e^{-y} \norm*{y^{\beta+L-1}} dy \\
& + \varepsilon \int_{1}^{\tau-\tau_0} e^{-y} y^{\norm*{\beta}+L-1} dy, \end{aligned}
\end{align*}
which converges to $\varepsilon C$ for some constant $C>0$ independent of $\varepsilon$ as $\tau\to +\infty$.
So we have
\begin{equation*}
\limsup_{\tau\to +\infty} \norm*{e^{-\tau} \: \Gamma \left( \beta+L \right) \: I_{\beta+L} \langle \rho \rangle (\tau)} \leq \varepsilon \: C.
\end{equation*}
Since $\varepsilon>0$ is arbitrary, we then have 
\begin{equation*}
\lim_{\tau\to +\infty} \frac{R(\tau)}{e^{\tau} \tau^{-L}} = \lim_{\tau\to +\infty} e^{-\tau} I_{\beta+L} \langle \rho \rangle (\tau) =0, \quad \text{i.e.,} \quad R(\tau) =  e^{\tau} \cdot \mathit{o}(\tau^{-L}).
\end{equation*}
Combining the leading term $L(\tau)$ with the remaining term $R(\tau)$, we have the desired asymptotics as $\tau\to +\infty$:
\begin{equation*}
L(\tau)+R(\tau) = 
\begin{dcases*} 
e^{\tau} \cdot \left( \sum_{\ell=0}^L \frac{1}{r} \; e^{\mathtt{i}(\theta+\phi)\beta} \; B_\ell \; (\tau)^{-\ell} + \mathit{o} \left( \tau^{-L} \right) \right) & when $\theta+\phi \in \frac{2\pi}{r}\mathbb{Z}$, \\
e^{\tau} \cdot \mathit{o} \left( \tau^{-L} \right) & otherwise. 
 \end{dcases*}
\end{equation*}

For general $\beta\in H^*(X)$, one can express $\alpha(t)$ as:
\begin{equation*}
\alpha(t) = \sum_{m=0}^{M-1} \alpha_{rm} \: t^{rm+\beta} + \sum_{m=0}^\infty \alpha_{r(m+M)} \: t^{rm+(rM+\beta)}
\end{equation*}
such that $rM+\beta$ has a positive real $H^0$ component. As the property of being asymptotically Mittag-Leffler is preserved under taking tails (Proposition \ref{propAML} (4)), the above computation can be applied to the tail part of the sum. The remaining part exhibits polynomial growth as $t\to e^{\mathtt{i}\phi}\cdot\infty$, which is dominated by the exponential growth of the tail part. Thus, the whole series also exhibits exponential growth, as does its tail, i.e., as $t\to e^{\mathtt{i}\phi}\cdot \infty$,
\begin{equation*}
\alpha(t) =
\begin{dcases*} 
e^{T\abs*{t}} \cdot \left( \sum_{\ell=0}^L \frac{1}{r} \; e^{\mathtt{i}(\theta+\phi)\beta} \; B_\ell \; (T\abs*{t})^{-\ell} + \mathit{o} \left( \abs*{t}^{-L} \right) \right) & when $\theta+\phi \in \frac{2\pi}{r}\mathbb{Z}$, \\
e^{T\abs*{t}} \cdot \mathit{o} \left( \abs*{t}^{-L} \right) & otherwise. 
 \end{dcases*}
\end{equation*}
\end{proof}

\begin{example}
Consider a manifold $X$ and fix a scale $(T,\theta,A)$, where $T\in\mathbb{R}_{>0}$, $\theta\in\mathbb{R}$ and $A\in H^*(X)$ is non-zero. For any $\phi\in\mathbb{R}$ and sufficiently small $\varepsilon >0$ satisfying $\theta+\phi\notin\frac{2\pi}{r}\mathbb{Z}$, the series 
\begin{equation*}
\alpha_{\phi,\varepsilon}(t) \defeq \sum_{m=0}^\infty \left( \frac{T^{rm+\beta} e^{\mathtt{i}\theta(rm+\beta)}}{\Gamma (1+rm+\beta)} + \frac{(T-\varepsilon)^{rm+\beta} e^{-\mathtt{i}\phi(rm+\beta)}}{\Gamma (1+rm+\beta)} \right) t^{rm+\beta} A
\end{equation*}
is $(T,\theta,A)$-scaled aymptotically Mittag-Leffler. Furthermore, it  exhibits the asymptotic behaviour $\alpha_{\phi,\varepsilon}(t) = e^{(T-\varepsilon) \abs{t}} \cdot \left( \frac{1}{r}A + \mathit{o}(1) \right)$ as $t\to e^{\mathtt{i}\phi} \cdot \infty$. This example demonstrates that, under the condition $\theta+\phi\notin\frac{2\pi}{r}\mathbb{Z}$, the result in Theorem \ref{thmDisTotalErrortoContTotalError} can be improved to the form $e^{(T-\varepsilon) \abs{t}} \cdot \left( \frac{1}{r}A + \mathit{o}(1) \right)$ as $t\to e^{\mathtt{i}\phi} \cdot \infty$ for specific series. However, the value of $\varepsilon$ is series-dependent and necessitates additional information beyond the property of being $(T,\theta,A)$-scaled asymptotically Mittag-Leffler.
\end{example}

We can now apply the theorem to the case where $X$ is a Fano manifold and $\alpha(t)$ is $t^{\frac{1}{2} \dim X} J(t)$ of $X$, where $J(t)$ is the $J$-function of $X$.

\begin{corollary}\label{corDisAsymtoContAym}

Let $X$ be a Fano manifold. Expand the $J$-function $J(t)=J(c_1(X) \log t, 1)$ as a power series:
\begin{equation*}
J(t) = \sum_{m=0}^{\infty} J_{rm} t^{rm+c_1(X)},
\end{equation*}
where $r$ is the Fano index of $X$ (Definition \ref{defJFunction}). If $t^{\frac{1}{2}\dim X} J(t)$ is $(T,A)$-scaled asymptotically Mittag-Leffler (Definition \ref{defaML}) for some $T\in\mathbb{R}_{>0}$ and $A\in H^*(X)$, then $J(t)$ has the following asymptotic equivalence as $t\to +\infty$:
\begin{equation*}
J(t) = \frac{1}{r} \; t^{-\frac{1}{2}\dim X} \; e^{Tt} \cdot \left( A+\mathit{o}(1) \right).
\end{equation*}

\end{corollary}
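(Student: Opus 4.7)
The plan is to recognize this corollary as a direct specialization of Theorem \ref{thmDisTotalErrortoContTotalError}. First, I would set $\alpha(t) \defeq t^{\frac{1}{2}\dim X} J(t)$. Using the given expansion $J(t) = \sum_{m=0}^\infty J_{rm} \, t^{rm + c_1(X)}$, this rewrites as
\begin{equation*}
\alpha(t) = \sum_{m=0}^\infty J_{rm} \, t^{rm + \beta}, \qquad \beta \defeq \tfrac{1}{2}\dim X + c_1(X) \in H^*(X),
\end{equation*}
which is a cohomology-valued power series of precisely the shape to which Theorem \ref{thmDisTotalErrortoContTotalError} applies. The hypothesis that $t^{\frac{1}{2}\dim X} J(t)$ is $(T,A)$-scaled asymptotically Mittag-Leffler is exactly the hypothesis $(T, \theta, A)$-aML with $\theta = 0$ for this $\alpha(t)$.

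Next, I would invoke Theorem \ref{thmDisTotalErrortoContTotalError} along the positive real direction, i.e., with $\phi = 0$. Then $\theta + \phi = 0 \in \frac{2\pi}{r}\mathbb{Z}$, placing us in the first case of the theorem's dichotomy. In this case the prefactor reduces to $e^{\mathtt{i}(\theta+\phi)\beta} = 1$, so the theorem yields, as $t \to +\infty$,
\begin{equation*}
\alpha(t) = \tfrac{1}{r} \, e^{T t} \cdot \bigl( A + \mathit{o}(1) \bigr).
\end{equation*}

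Finally, dividing both sides by $t^{\frac{1}{2}\dim X}$ (which is well-defined for $t > 0$) gives
\begin{equation*}
J(t) = \tfrac{1}{r} \, t^{-\frac{1}{2}\dim X} \, e^{Tt} \cdot \bigl( A + \mathit{o}(1) \bigr),
\end{equation*}
which is exactly the claimed asymptotic. No genuine obstacle arises here: the corollary is essentially a bookkeeping translation of the general theorem into the language of the $J$-function, where the role of the abstract exponent $\beta$ is played by $\tfrac{1}{2}\dim X + c_1(X)$ and the direction $\phi = 0$ is chosen to match the $t \to +\infty$ asymptotic behaviour relevant to the principal asymptotic class.
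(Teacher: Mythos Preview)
Your proposal is correct and matches the paper's own proof essentially verbatim: the paper likewise sets $\alpha(t) = t^{\frac{1}{2}\dim X} J(t)$, $\beta = \frac{1}{2}\dim X + c_1(X)$, $\theta = \phi = 0$, and invokes Theorem \ref{thmDisTotalErrortoContTotalError} directly. The only cosmetic difference is that you spell out the final division by $t^{\frac{1}{2}\dim X}$, which the paper leaves implicit.
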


\begin{proof}
By setting $\beta = \frac{1}{2} \dim X + c_1(X)$, $\theta=\phi=0$ and $\alpha(t) = t^{\frac{1}{2} \dim X} J(t)$, the result follows directly from Theorem \ref{thmDisTotalErrortoContTotalError}.
\end{proof}

According to Proposition \ref{propContAsym}, the $(T,A)$ pair in the asymptotic expansion as $t\to +\infty$,
\begin{equation*}
J(t) = \frac{1}{r} t^{-\frac{1}{2}\dim X} e^{Tt} \cdot (A+\mathit{o}(1)),
\end{equation*}
is already known when $c_1\star_0$ has a simple rightmost eigenvalue. Therefore, under the assumptions that $c_1 \star_0$ have a simple rightmost eigenvalue and $t^{\frac{1}{2} \dim X} J(t)$ is $(T,A)$-scaled asymptotically Mittag-Leffler, the scaling $(T,A)$ can be identified with the rightmost eigenvalue of $c_1 \star_0$ and principal asymptotic class of $X$, respectively.

% Remarks and discussions ****

% \subsubsection*{Examples}
% known examples: $\theta=0$ projective spaces, product hypersurface
Most known examples are $(T,\theta,A)$-scaled asymptotically Mittag-Leffler with $\theta=0$.
Later in this section, we will prove that $t^{\frac{1}{2}\dim X} J(t)$ for projective spaces is $(T,A)$-scaled asymptotically Mittag-Leffler. Furthermore, we will show that the property of the $J$-function being $(T,A)$-scaled asymptotically Mittag-Leffler is preserved under the operations of taking products and hypersurfaces. 

% expand: explain more on the computational counterexample for aML
There are also potential examples of $(T,\theta,A)$-scaled asymptotically Mittag-Leffler $J$-function with $e^{\mathtt{i}\theta}\neq 1$. In \cite{counterexamples}, the authors investigated a series of toric Fano manifolds, $X_n \defeq \mathbb{P}_{\mathbb{P}^n} (\mathcal{O} \oplus \mathcal{O}(n))$. They proved that, for odd $n \geq 3$, the unique eigenvalue of $c_1(X_n)\star_0$ with maximum modulus is negative. For example, $\rho(c_1(X_3)\star_0) \approx 26.9877$, and $-\rho(c_1(X_3)\star_0)$ is an eigenvalue of the operator $c_1(X_3)\star_0$.

\begin{figure}[H]
\centering
\includegraphics[width=0.8\textwidth]{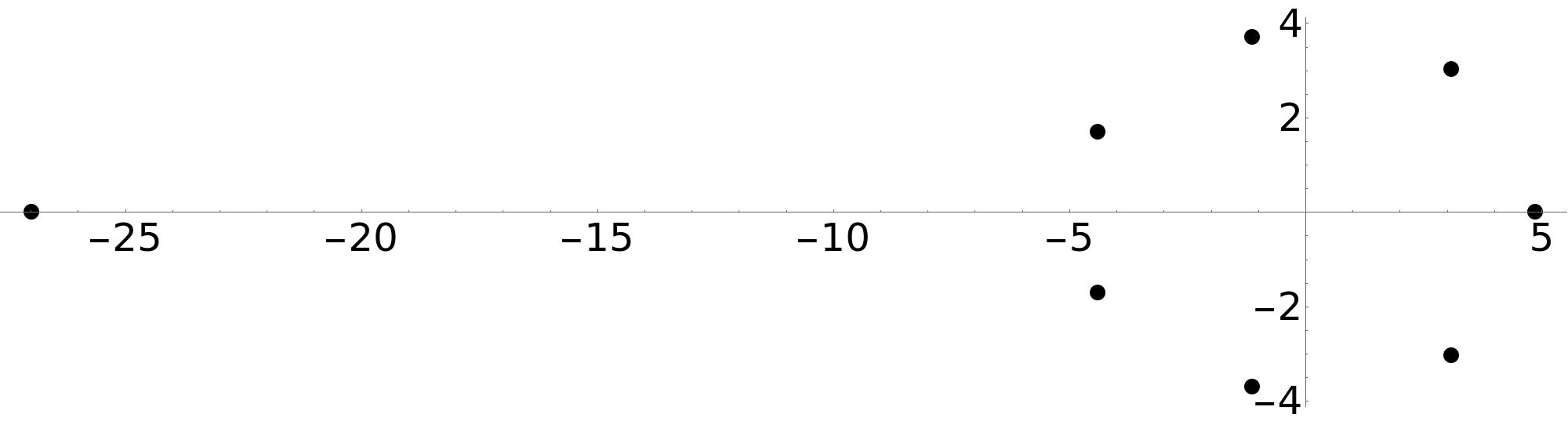}
\caption*{Figure of the distribution of the eigenvalues of $c_1(X_3)\star_0$}
\end{figure}

Let $x_1 \in H^2(X_3)$ be the pullback of the hyperplane class $H$ on the base projective space $\mathbb{P}^3$ and let $x_2 \in H^2(X_3)$ be the Poincaré dual of the divisor $\mathbb{P}_{\mathbb{P}^3}(\mathcal{O}) \subseteq X_3$. Then $-3x_1+x_2$ is Poincaré dual to the divisor $E=\mathbb{P}_{\mathbb{P}^3}(\mathcal{O}(3)) \subseteq X_3$.
The cohomology of $X_3$ is generated by $x_1$ and $x_2$, subject to the relations $x_1^4=0$ and $x_2(-3x_1+x_2)=0$.  
We expect that $X_3$ has an $(T,\theta,A)$-scaled asymptotically Mittag-Leffler $J$-function, where $T=\rho(c_1(X_3)\star_0)$, $\theta=\pi$, and $A\in H^*(X_3)$ is given by:
\begin{align*}
A \propto& \; \hat{\Gamma} \; \Ch{\left( \mathcal{O}_E \otimes \mathcal{O}(-2H) \right)} \\
=& \; \Gamma(1+x_1)^4 \: \Gamma(1-3x_1+x_2) \: \Gamma(1+x_2) \; P(2\pi\mathtt{i}x_1,2\pi\mathtt{i}x_2), 
\end{align*}
where the polynomial $P$ is given by:
\begin{align*}
P(x_1,x_2) =& \left( 1-e^{-(-3x_1+x_2)} \right) e^{-2x_1} \\
=& (-3x_1+x_2) \left( 1-\frac{1}{2}x_1+\frac{1}{2}x_1^2-\frac{5}{24}x_1^3 \right). 
\end{align*}
If the $J$-function of $X_3$ is indeed $(T,\theta,A)$-scaled asymptotically Mittag-Leffler, then the expression in the following table is expected to converge to $A$.
We have computed several values of this expression, represented as a vectors in the basis $(1, x_2, x_1, x_1 x_2, x_1^2, x_1^2 x_2, x_1^3, x_1^3 x_2)$:

\begingroup
\renewcommand{\arraystretch}{1.2}
\begin{table}[H]
\centering
\begin{tabular}{|c|c|} 
 \hline
 $m$ & $J_m \; m! \: m^{2+c_1} \; T^{-(m+2+c_1)} \; e^{-\mathtt{i}\theta (m+2+c_1)}$ \\
 \hline \hline
 $14$ & $\begin{aligned} (& 0., -0.248743, 0.746229, 0.187542+0.781449\mathtt{i}, -0.562627-2.34435\mathtt{i}, \\ & 2.18704-0.589182\mathtt{i}, -6.56111+1.76755\mathtt{i}, -3.97697-4.29992\mathtt{i}) \times 10^{-3} \end{aligned}$ \\ 
 \hline
 $15$ & $\begin{aligned} (& 0., -0.249072, 0.747216, 0.184877+0.782483\mathtt{i}, -0.554631-2.34745\mathtt{i}, \\ & 2.19172-0.580808\mathtt{i}, -6.57515+1.74242\mathtt{i}, -3.95630-4.31122\mathtt{i}) \times 10^{-3} \end{aligned}$ \\
 \hline
 $16$ & $\begin{aligned} (& 0., -0.249367, 0.748102, 0.182541+0.783411\mathtt{i}, -0.547422-2.35023\mathtt{i}, \\ & 2.19584-0.573469\mathtt{i}, -6.58753+1.72041\mathtt{i}, -3.93822-4.32113\mathtt{i}) \times 10^{-3} \end{aligned}$ \\
 \hline
 $17$ & $\begin{aligned} (& 0., -0.249635, 0.748904, 0.180477+0.784251\mathtt{i}, -0.541431-2.35275\mathtt{i}, \\ & 2.19952-0.566985\mathtt{i}, -6.59856+1.70096\mathtt{i}, -3.92230-4.32991\mathtt{i}) \times 10^{-3} \end{aligned}$ \\
 \hline
 $30$ & $\begin{aligned} (& 0., -0.251845, 0.755534, 0.166211+0.791193\mathtt{i}, -0.498632-2.37358\mathtt{i}, \\ & 2.22748-0.522166\mathtt{i}, -6.68243+1.56650\mathtt{i}, -3.81519-4.39490\mathtt{i}) \times 10^{-3} \end{aligned}$ \\ 
 \hline
 \hline
 $A$ & $\begin{aligned} (& 0., -0.252094, 0.756281, 0.145512+0.791976\mathtt{i}, -0.436537-2.37593\mathtt{i}, \\ & 2.23873-0.457141\mathtt{i}, -6.71619+1.37142\mathtt{i}, -3.63163-4.42768\mathtt{i}) \times 10^{-3} \end{aligned}$ \\
 \hline 
\end{tabular}
\caption*{Table of some values of $J_m \; m! \: m^{2+c_1} \; T^{-(m+2+c_1)} \; e^{-\mathtt{i}\theta (m+2+c_1)}$ of $\mathbb{P}_{\mathbb{P}^3} (\mathcal{O} \oplus \mathcal{O}(3))$}
\label{table:valueofCoeofX3}
\end{table}
\endgroup

The computation results presented above lend support to our expectation that the $J$-function of $\mathbb{P}_{\mathbb{P}^3} (\mathcal{O} \oplus \mathcal{O}(3))$ is $(T,\theta,A)$-scaled asymptotically Mittag-Leffler, where $T=\rho(c_1(X_3)\star_0)$ and $\theta=\pi$. The observation that $T$ corresponds to the spectral radius of $c_1\star_0$ also holds in other examples where $\theta=0$. Building upon the approach of Proposition \ref{propContAsym}, which connects the simple rightmost eigenvalue and the asymptotic behaviour of the $J$-function, we pose the following question regarding a potential relationship between eigenvalues of $c_1\star_0$ with maximum modulus and the asymptotic behaviour of the coefficients of $J$-function:

\begin{question}\label{questionDiscreteAsym}

Let $X$ be a Fano manifold. For the operator $c_1\star_0 :H^*(X) \rightarrow H^*(X)$, let $T$ be its spectral radius, and let $\theta\in\mathbb{R}$ such that $Te^{\mathtt{i}\theta}$ is an eigenvalue. Is $t^{\frac{1}{2}\dim X}J(t)$ a $(T, \theta, A_\theta)$-scaled asymptotic Mittag-Leffler series for some non-zero $A_\theta \in H^*(X)$?

\end{question}

% explain some consequences on distribution of eigenvalues (r-fold simple outmost) and potential defintion of directional PAC
We now highlight some immediate corollaries that whould follow if the preceding question were answered affirmatively for certain $\theta\in\mathbb{R}$.
\begin{enumerate}
\item If a Fano manifold $X$ satisfies the property stated above for $\theta=0$, and $T$ is a simple eigenvalue, then $T$ is the simple rightmost eigenvalue of $c_1\star_0$. In this case, $A_0$ is the principal asymptotic class.
\item If the property above is satisfied by all eigenvalues with maximum modulus, then the $J$-function would be $(T,\theta,A_\theta)$-scaled asymptotically Mittag-Leffler for multiple values of $\theta$. By Proposition \ref{propAML}, the differences between these $\theta$ values are integer multiples of $\frac{2\pi}{r}$. In this case, all eigenvalues with maximum modulus differ by a multiple of $e^{2\pi\mathtt{i}\frac{k}{r}}$ for some integer $k$.
\end{enumerate}
Note that if a Fano manifold satisfies both conditions 1 and 2 above, then it satisfies Property $\mathcal{O}$ as defined in \cite{MR3536989}.

% example: projective space

\subsection{Example: Projective Spaces}

As an example, we verify our claim regarding the asymptotic behaviour of coefficients of the $J$-function fot projective spaces through direct computation.

\begin{proposition}

Let $J(t)=J(c_1 \log t, 1)$ be the $J$-function of $\mathbb{P}^N$. Then $t^{\frac{1}{2} N} J(t)$ is $(T,A)$-scaled asymptotically Mittag-Leffler (Definition \ref{defaML}). Specifically, the coefficients of $t^{\frac{1}{2} N} J(t)$ of $\mathbb{P}^N$ exhibit the asymptotic equivalence as $m\to \infty$:
\begin{equation*}
J_{rm} = \frac{T^{(N+1)m+\frac{1}{2}N+c_1}}{\Gamma \left( 1+(N+1)m+\frac{1}{2}N+c_1 \right)} \cdot \left( A+\mathit{o}(1) \right)
\end{equation*}
where $T=N+1$, and $A=(N+1)^{\frac{1}{2}} (2\pi)^{-\frac{1}{2} N} \Gamma(1+\delta)^{N+1}$, with $\delta\in H^2(\mathbb{P}^N)$ being the hyperplane class. 

\end{proposition}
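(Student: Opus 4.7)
The plan is to start from the explicit formula for the $J$-function of projective space, reduce the claim to a statement about ratios of Gamma functions, and then apply the multiplication formula (Proposition \ref{propGammaFunction} (2)) together with the limit formula (Proposition \ref{propGammaFunction} (3)).

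The $J$-function of $\mathbb{P}^N$ on the anti-canonical line has the closed form
\begin{equation*}
J(t) = t^{c_1}\sum_{m=0}^\infty \frac{t^{(N+1)m}}{\prod_{k=1}^m(\delta+k)^{N+1}},
\end{equation*}
so the Fano index is $r=N+1$ and, using $\prod_{k=1}^m(\delta+k)=\Gamma(m+1+\delta)/\Gamma(1+\delta)$, the coefficients are
\begin{equation*}
J_{(N+1)m} = \frac{\Gamma(1+\delta)^{N+1}}{\Gamma(m+1+\delta)^{N+1}}.
\end{equation*}
With $T=N+1$ and $A=(N+1)^{1/2}(2\pi)^{-N/2}\Gamma(1+\delta)^{N+1}$, the claim reduces to the asymptotic
\begin{equation*}
\frac{\Gamma(1+(N+1)m+\tfrac{N}{2}+c_1)}{\Gamma(m+1+\delta)^{N+1}} \;\sim\; (2\pi)^{-N/2}(N+1)^{1/2}\,(N+1)^{(N+1)m+\frac{N}{2}+c_1} \qquad (m\to\infty).
\end{equation*}

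To establish this, I would apply the multiplication formula of Proposition \ref{propGammaFunction} (2) with $r=N+1$ and $\alpha=m+\delta+\frac{N+2}{2(N+1)}$, chosen so that $(N+1)\alpha=1+(N+1)m+\frac{N}{2}+c_1$. This yields
\begin{equation*}
\Gamma\!\left(1+(N+1)m+\tfrac{N}{2}+c_1\right) = (2\pi)^{-N/2}(N+1)^{1/2+(N+1)m+\frac{N}{2}+c_1}\prod_{k=0}^{N}\Gamma\!\left(m+1+\delta+\tfrac{2k-N}{2(N+1)}\right).
\end{equation*}
It then remains to show that the product of the $N+1$ Gamma factors is asymptotically equivalent to $\Gamma(m+1+\delta)^{N+1}$. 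For each $k$, Proposition \ref{propGammaFunction} (3) applied cohomologically gives $\Gamma(m+1+\delta+c_k)/\Gamma(m+1+\delta) \sim m^{c_k}$ as $m\to\infty$, where $c_k = (2k-N)/(2(N+1))$. The key observation is the symmetric cancellation $\sum_{k=0}^{N}(2k-N)=0$, so $\prod_k m^{c_k}=m^0=1$, and the product of the $N+1$ Gamma factors is indeed asymptotic to $\Gamma(m+1+\delta)^{N+1}$. Dividing through and multiplying by $\Gamma(1+\delta)^{N+1}$ delivers exactly the required asymptotic equivalence.

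The main technical point is the cohomology-valued version of Proposition \ref{propGammaFunction} (3): since $\delta$ is nilpotent, $m^{c_k+\delta}$ must be interpreted via the exponential of $\log m$ times a nilpotent class, and one has to verify that the error terms, which are of the form $\mathit{o}(1)$ in each coefficient, multiply and add across the $N+1$ factors without spoiling the $\mathit{o}(1)$ conclusion. This is routine once the identity $\sum_k c_k=0$ has collapsed the leading $m$-power. An alternative (slightly more laborious) route would be to apply the cohomology-valued Stirling formula Proposition \ref{propGammaFunction} (4) directly to $\Gamma(m+1+\delta)^{N+1}$ and to $\Gamma(1+(N+1)m+\tfrac{N}{2}+c_1)$, track the $(1+N/(2v))^{v+N/2}\to e^{N/2}$ correction, and compare; this produces the same constants $(2\pi)^{-N/2}(N+1)^{1/2}$, giving an independent check on the computation.
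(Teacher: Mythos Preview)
Your proof is correct and follows essentially the same route as the paper: write $J_{(N+1)m}=\Gamma(1+\delta)^{N+1}/\Gamma(1+m+\delta)^{N+1}$, invoke the Gauss multiplication formula (Proposition \ref{propGammaFunction} (2)) with multiplier $N+1$, and then use Proposition \ref{propGammaFunction} (3) to collapse the product of shifted Gamma factors. The only cosmetic difference is that the paper applies the multiplication formula at $\alpha=1+m+\delta$ (so the shifts $k/(N+1)$ sum to $N/2$ and one absorbs a factor $(1+m+\delta)^{N/2}$ before shifting $\Gamma((N+1)(1+m+\delta))$ down by $N/2$), whereas you pre-center the argument so that the shifts $c_k=(2k-N)/(2(N+1))$ sum to zero; both arrangements yield the same constants and the same $\mathit{o}(1)$ control.
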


\begin{proof}
The $J$-function of $X=\mathbb{P}^N$ is given by \cite{MR1354600}:
\begin{equation*}
t^{\frac{1}{2} N}J(t) = \sum_{m=0}^\infty \prod_{k=1}^{m} \frac{1}{(\delta+k)^{N+1}} t^{rm+\frac{1}{2}N+c_1},
\end{equation*}
The coefficients of the $J$-function are given by
\begin{equation*}
J_{rm} = \prod_{k=1}^m \frac{1}{(\delta+k)^{N+1}}=\frac{\Gamma(1+\delta)^{N+1}}{\Gamma(1+m+\delta)^{N+1}}.
\end{equation*}
By the multiplication formula (Proposition \ref{propGammaFunction}(2)), 
\begin{multline*}
\prod_{k=0}^{N} \; \Gamma\left( 1+m+\delta +\frac{k}{N+1} \right) \\
= (2\pi)^{\frac{N}{2}} \: (N+1)^{\frac{1}{2}-(N+1) (1+m+\delta)} \: \Gamma \left( (N+1) (1+m+\delta) \right).
\end{multline*}
Therefore, by Proposition \ref{propGammaFunction}(3),
\begingroup
\allowdisplaybreaks[1]
\begin{align*}
1 &= \lim_{m\to\infty} \frac{\prod_{k=0}^{N} \Gamma\left( 1+m+\delta +\frac{k}{N+1} \right)}{\prod_{k=0}^{N} \Gamma(1+m+\delta) \; (1+m+\delta)^{\frac{k}{N+1}}} \\[3ex]
&= \lim_{m\to\infty} \frac{(2\pi)^{\frac{N}{2}} \; (N+1)^{\frac{1}{2}-(N+1) (1+m+\delta)} \; \Gamma \left( (N+1) (1+m+\delta) \right)}{\Gamma(1+m+\delta)^{N+1} \; (1+m+\delta)^{\frac{1}{2}N}} \\[3ex]
&= \lim_{m\to\infty} \frac{1}{\Gamma(1+m+\delta)^{N+1} \; (1+m+\delta)^{\frac{1}{2}N}} \; \frac{\Gamma \left( N+1+(N+1)m+(N+1)\delta \right)}{(2\pi)^{-\frac{1}{2}N} \; (N+1)^{\frac{1}{2}+N+(N+1)m+(N+1)\delta} } \\[3ex]
&= \lim_{m\to\infty} \frac{1}{\Gamma(1+m+\delta)^{N+1}} \; \frac{\Gamma \left( N+1+(N+1)m+(N+1)\delta -\frac{1}{2}N \right)}{(2\pi)^{-\frac{1}{2}N} \; (N+1)^{\frac{1}{2}+N+(N+1)m+(N+1)\delta-\frac{1}{2}N} } \\[3ex]
&= \lim_{m\to\infty} J_{rm} \; \frac{1}{(N+1)^{\frac{1}{2}} \; (2\pi)^{-\frac{1}{2} N} \; \Gamma(1+\delta)^{N+1}} \; \frac{\Gamma \left( 1+(N+1)m+\frac{1}{2}N+c_1 \right)}{(N+1)^{(N+1)m+\frac{1}{2}N+c_1}} \\[3ex]
&= \lim_{m\to\infty} J_{rm} \; \frac{1}{A} \; \frac{\Gamma \left( 1+(N+1)m+\frac{1}{2}N+c_1 \right)}{T^{(N+1)m+\frac{1}{2}N+c_1}}.
\end{align*}
\endgroup
\end{proof}

Note that for $X=\mathbb{P}^N$, $c_1 \star_0$ has a simple rightmost eigenvalue. Therefore, the principal asymptotic class of $\mathbb{P}^N$ can be given by $A=(N+1)^{\frac{1}{2}} (2\pi)^{-\frac{1}{2} N} \Gamma(1+\delta)^{N+1}$ based on the above computation. As the Gamma class of $\mathbb{P}^N$ is $\hat{\Gamma}_X=\Gamma(1+\delta)^{N+1}$, the above computation also verifies the Gamma conjecture I for projective spaces. 
}

% \newpage

\section{Product}\label{sec:Product}
{
% Introduction to section
In this section, we aim to prove that the product of two $(T,A)$-scaled asymptotically Mittag-Leffler series maintains the asymptotically Mittag-Leffler property. Subsequently, we will apply this result to the product of two Fano manifolds, each possessing an asymptotically Mittag-Leffler $J$-function.

% Discrete asymptote preserves upon taking product
\begin{proposition}\label{propDisGammaIProduct}

Let $X$ and $Y$ be two smooth manifolds. Let $\alpha^X(t) : \mathbb{R}_{>0} \rightarrow H^*(X)$ and $\alpha^Y(t) : \mathbb{R}_{>0} \rightarrow H^*(Y)$ be two cohomology-valued functions, with the series expansions:
\begin{equation*}
\alpha^X(t) = \sum_{m=0}^\infty \alpha^X_{r_X m} t^{r_X m+\beta_X} \quad \text{and} \quad \alpha^Y(t) = \sum_{m=0}^\infty \alpha^Y_{r_Y m} t^{r_Y m+\beta_Y}, 
\end{equation*}
Suppose that both $\alpha^X(t)$ and $\alpha^Y(t)$ are asymptotically Mittag-Leffler (Definition \ref{defaML}), with scalings $(T_X, A_X)$ and $(T_Y, A_Y)$, respectively. Then, the product $\alpha^X(t) \alpha^Y(t): \mathbb{R}_{>0} \rightarrow H^*(X) \otimes H^*(Y) = H^*(X\times Y)$ is $(T_X+T_Y, \frac{r}{r_X r_Y} A_X \otimes A_Y)$-scaled asymptotically Mittag-Leffler, where $r=\mathrm{GCD}(r_X, r_Y)$.

\end{proposition}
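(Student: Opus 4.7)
The plan is to decompose the Cauchy coefficient
\[
c_n := \sum_{r_X m_X + r_Y m_Y = rn} \alpha^X_{r_X m_X}\otimes\alpha^Y_{r_Y m_Y}
\]
of $t^{rn+\beta_X+\beta_Y}$ in $\alpha^X(t)\alpha^Y(t)$ into an ``ideal'' Mittag-Leffler convolution plus a vanishing error. Writing $\alpha^X_{r_X m_X} = \frac{T_X^{r_X m_X+\beta_X}}{\Gamma(1+r_X m_X+\beta_X)}(A_X+\varepsilon^X_{m_X})$ and similarly for $Y$ with $\norm{\varepsilon^X_{m_X}}, \norm{\varepsilon^Y_{m_Y}} \to 0$, we obtain $c_n = S_n (A_X \otimes A_Y) + E_n$, where
\[
S_n = \sum_{r_X m_X + r_Y m_Y = rn}\frac{T_X^{r_X m_X+\beta_X}\,T_Y^{r_Y m_Y+\beta_Y}}{\Gamma(1+r_X m_X+\beta_X)\,\Gamma(1+r_Y m_Y+\beta_Y)}
\]
is the pure Mittag-Leffler convolution and $E_n$ collects the cross terms in $\varepsilon^X, \varepsilon^Y$. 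The goal is then to show $S_n \sim \tfrac{r}{r_X r_Y}(T_X+T_Y)^{rn+\beta_X+\beta_Y}/\Gamma(1+rn+\beta_X+\beta_Y)$ together with $E_n = o(S_n)$.

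For $S_n$, view it as the $t^{rn+\beta_X+\beta_Y}$-coefficient of $F_X(t)F_Y(t)$, where $F_i(t) = \sum_m (T_i t)^{r_i m+\beta_i}/\Gamma(1+r_i m+\beta_i)$. Proposition~\ref{AsymRL}(2) (after rescaling $t \mapsto T_i t$) yields $F_i(t) = \tfrac{T_i^{\beta_i}}{r_i}\sum_{a=0}^{r_i-1} I_{\beta_i}\langle e^{\xi_{r_i}^a T_i s}\rangle(t)$. Expanding each Riemann-Liouville integral as a power series, the $(a,b)$-summand in the Cauchy product contributes to the $t^{rn+\beta_X+\beta_Y}$-coefficient a convolution $C_{rn}(\mu_a,\nu_b) = \sum_{k+\ell = rn}\mu_a^k\nu_b^\ell/(\Gamma(1+k+\beta_X)\Gamma(1+\ell+\beta_Y))$, where $\mu_a = \xi_{r_X}^a T_X$ and $\nu_b = \xi_{r_Y}^b T_Y$. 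A Stirling/Laplace analysis (using Proposition~\ref{propGammaFunction}(3)--(4)) gives $C_{rn}(\mu_a,\nu_b) \sim (\mu_a+\nu_b)^{rn+\beta_X+\beta_Y}/(\mu_a^{\beta_X}\nu_b^{\beta_Y}\Gamma(1+rn+\beta_X+\beta_Y))$ in magnitude, so pairs with $|\mu_a+\nu_b| < T_X+T_Y$ are strictly exponentially subdominant. Writing $r_X = r\rho_X$, $r_Y = r\rho_Y$ with $\gcd(\rho_X, \rho_Y) = 1$, the pairs achieving $|\mu_a+\nu_b| = T_X+T_Y$ are exactly the $r$ pairs $(a,b) = (\rho_X j, \rho_Y j)$, $j = 0,\ldots,r-1$. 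For each such pair, the phase $T_X^{\beta_X}T_Y^{\beta_Y}/(\mu_a^{\beta_X}\nu_b^{\beta_Y}) = e^{-2\pi\mathtt{i}j(\beta_X+\beta_Y)/r}$ cancels against $(\mu_a+\nu_b)^{rn+\beta_X+\beta_Y} = e^{2\pi\mathtt{i}jn}\,e^{2\pi\mathtt{i}j(\beta_X+\beta_Y)/r}\,(T_X+T_Y)^{rn+\beta_X+\beta_Y}$ (using $e^{2\pi\mathtt{i}jn}=1$), leaving contribution exactly $(T_X+T_Y)^{rn+\beta_X+\beta_Y}/\Gamma(1+rn+\beta_X+\beta_Y)$. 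Summing the $r$ dominant pairs and dividing by $r_X r_Y$ gives the claimed asymptotic for $S_n$.

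For the error, given $\delta>0$ choose $M$ with $\norm{\varepsilon^X_{m_X}}, \norm{\varepsilon^Y_{m_Y}} < \delta$ whenever $m_X, m_Y \geq M$. Split the summation into three regions: the region $m_X, m_Y \geq M$ contributes at most $\delta(\norm{A_X}+\norm{A_Y}+\delta)S_n$; the region $m_X < M$ contains finitely many terms each carrying a $Y$-Gamma factor of order $T_Y^{rn+\beta_Y}/\Gamma(1+rn+\beta_Y)$, producing a ratio $(T_Y/(T_X+T_Y))^{rn}$ relative to $S_n$; and the symmetric region $m_Y < M$ gives the analogous $(T_X/(T_X+T_Y))^{rn}$ ratio. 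Both boundary regions vanish exponentially. Letting $\delta\to 0$ yields $E_n = o(S_n)$, whence $c_n = S_n(A_X\otimes A_Y)(1+o(1))$ establishes the asymptotically Mittag-Leffler property with scaling $(T_X+T_Y, \tfrac{r}{r_X r_Y}A_X \otimes A_Y)$. The main obstacle is the asymptotic evaluation of $C_N(\mu,\nu)$ when $\beta_X,\beta_Y$ carry nilpotent parts: the clean binomial identity $\sum_{k+\ell=N}\mu^k\nu^\ell/(k!\ell!) = (\mu+\nu)^N/N!$ valid in the scalar case fails, so one reduces to a scalar saddle-point analysis at $k \approx \mu N/(\mu+\nu)$ via $\Gamma(1+k+\beta)^{-1} \sim \Gamma(1+k)^{-1}k^{-\beta}$ (Proposition~\ref{propGammaFunction}(3)) and verifies that the cohomological correction $(k/N)^{-\beta}$ evaluated at the saddle assembles into the predicted $(\mu/(\mu+\nu))^{-\beta_X}(\nu/(\mu+\nu))^{-\beta_Y}$.
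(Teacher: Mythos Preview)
Your proposal follows the same overall architecture as the paper's proof: decompose the Cauchy coefficient into a pure Mittag-Leffler convolution plus error, handle the convolution via a roots-of-unity decomposition identifying the $r$ aligned pairs as dominant, and split the error into a bounded interior part (where $m_X, m_Y \geq M$) and exponentially decaying boundary parts. The paper labels these pieces $L_{rm}, E^1_{rm}, E^2_{rm}, E^3_{rm}$ (with $E^3$ further split into $E^4, E^5$), but the logical skeleton is identical to yours, and your phase-cancellation computation for the $r$ dominant pairs is correct.

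The substantive difference is in how the main convolution $S_n$ is evaluated. You propose a direct Stirling/Laplace analysis of the discrete sums $C_N(\mu,\nu)$, flagging this as ``the main obstacle''. The paper instead converts the discrete convolution into a continuous one via the Riemann--Liouville representation: each factor $T_X^{r_X m_X+\beta_X}/\Gamma(1+r_X m_X+\beta_X)$ becomes $I_{r_X m_X + \beta_X}\langle 1\rangle(T_X)$, and the product becomes a double integral over $[0,S_X]\times[0,S_Y]$ after rescaling by $T_X+T_Y$. This buys two things. First, the target $L_{rm} = 1/\Gamma(1+rm+\beta_X+\beta_Y)$ follows from an \emph{exact} beta-function identity over a triangular domain, so the nilpotent parts of $\beta_X, \beta_Y$ cause no difficulty. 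Second, the subdominance of the non-aligned root-of-unity pairs follows from a pointwise geometric bound: on the rectangle, $|\xi_{r_X}^k s_X + \xi_{r_Y}^\ell s_Y| \leq \max(S_X, S_Y, |\xi_{r_X}^k S_X + \xi_{r_Y}^\ell S_Y|) < 1$ by convexity, giving exponential decay without any analysis of oscillatory cancellation.

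This second point is where your proposal has a genuine gap. For the discrete sum $C_N(\mu_a,\nu_b)$ with $\xi_{r_X}^a \neq \xi_{r_Y}^b$, the triangle inequality gives only $|C_N| \lesssim (|\mu_a|+|\nu_b|)^N/N! = (T_X+T_Y)^N/N!$, which is \emph{not} subdominant. Obtaining $|C_N| \sim |\mu_a+\nu_b|^N/N!$ requires capturing the oscillatory cancellation---a steepest-descent argument with a complex saddle at $k \approx N\mu_a/(\mu_a+\nu_b)$, uniformly in the nilpotent correction $k^{-\beta_X}(N-k)^{-\beta_Y}$. This is feasible but is real work you have not supplied; the paper's integral approach sidesteps it entirely. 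A smaller issue: your interior error bound should read $\delta(\norm{A_X}+\norm{A_Y}+\delta)\,\tilde{S}_n$ with $\tilde{S}_n$ the sum of \emph{norms} of the Mittag-Leffler coefficients, and you then need $\tilde{S}_n = O(\text{target})$---again immediate from the paper's integral bound (their $E^5$ estimate), but requiring a separate argument in your discrete setup.
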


\begin{proof}
%% Setup

\subsubsection*{}

We begin by proving the case where the $H^0$ components of both $\beta_X$ and $\beta_Y$ have a positive real parts. Then, we will extend this result to general $\beta_X$ and $\beta_Y$.
For the sake of simplicity, let us denote
\begin{equation*}
I_\alpha(t) \defeq I_{\alpha} \langle 1 \rangle (t) = \frac{t^{\alpha}}{\Gamma \left( 1+\alpha \right)} \in H^*(X \times Y)
\end{equation*}
where $\alpha \in H^*(X \times Y)$, $t\in \mathbb{R}_{>0}$.

For $\alpha_1\in H^*(X)$ and $\alpha_2\in H^*(Y)$, we write $\alpha_1 \alpha_2$ to represent $\alpha_1 \otimes \alpha_2 \in H^*(X\times Y)$ for brevity.
\begin{align*}
& \alpha^X(t) \alpha^Y(t) \\
&= \left( \sum_{m_X=0}^\infty \alpha^X_{r_X m_X} t^{r_X m_X + \beta_X} \right) \left( \sum_{m_Y=0}^\infty \alpha^Y_{r_Y m_Y} t^{r_Y m_Y + \beta_Y} \right) \\
&= \sum_{n=0}^\infty \Bigg( \sum_{\substack{(m_X, m_Y) \in \mathbb{N}^2 \\ r_X m_X + r_Y m_Y = n}} \hspace{-1em} \alpha^X_{r_X m_X} \alpha^Y_{r_Y m_Y} \Bigg) t^{n+\beta_X+\beta_Y}.
\end{align*}
Therefore, we have
\begin{equation*}
(\alpha^X \alpha^Y)_{n} = \begin{dcases*} \sum_{\substack{(m_X, m_Y) \in \mathbb{N}^2 \\ r_X m_X + r_Y m_Y = n}} \hspace{-1em} \alpha^X_{r_X m_X} \alpha^Y_{r_Y m_Y} & if $r \mid n$, \\
 0 & if $r \nmid n$, \end{dcases*}
\end{equation*}
or, equilvalently,
\begin{equation*}
(\alpha^X \alpha^Y)_{rm} = \hspace{-1em} \sum_{\substack{(m_X, m_Y) \in \mathbb{N}^2 \\ r_X m_X + r_Y m_Y = rm}} \hspace{-1em} \alpha^X_{r_X m_X} \alpha^Y_{r_Y m_Y}.
\end{equation*}
Given that $\alpha^X$ and $\alpha^Y$ are both asymptotically Mittag-Leffler, with scalings $(T_X, A_X)$ and $(T_Y, A_Y)$ respectively, the following asymptotic equivalences hold:
\begin{equation*}
\alpha^X_{r_X m_X} = I_{r_X m_X + \beta_X} (T_X) \cdot \left(A_X + R^X_{r_X m_X} \right),
\end{equation*}
\begin{equation*}
\alpha^Y_{r_Y m_Y} = I_{r_Y m_Y + \beta_Y} (T_Y) \cdot \left(A_Y + R^Y_{r_Y m_Y} \right),
\end{equation*}
for some $R^X_{r_X m_X} \in H^*(X)$, $R^Y_{r_Y m_Y} \in H^*(Y)$ with 
\begin{equation*}
\lim_{m_X \to \infty} R^X_{r_X m_X}=\lim_{m_Y \to \infty} R^Y_{r_Y m_Y}=0.
\end{equation*}

%% MainFramework

\subsubsection*{(Setup:)}

Therefore, we have
\begingroup
\allowdisplaybreaks[1]
\begin{align*}
 & (\alpha^X \alpha^Y)_{rm} \\[2ex]
=& \hspace{-1em} \sum_{\substack{(m_X, m_Y) \in \mathbb{N}^2 \\ r_X m_X + r_Y m_Y = rm}} \hspace{-1em} I_{r_X m_X + \beta_X}(T_X) \; I_{r_Y m_Y + \beta_Y}(T_Y) \; \left(A_X + R^X_{r_X m_X} \right) \left(A_Y + R^Y_{r_Y m_Y} \right) \\[2ex]
=& \hspace{-1em} \sum_{\substack{(m_X, m_Y) \in \mathbb{N}^2 \\ r_X m_X + r_Y m_Y = rm}} \hspace{-1em} \begin{aligned}[t] & \left(A_X + R^X_{r_X m_X} \right) \left(A_Y + R^Y_{r_Y m_Y} \right) \\
 & \hspace{1em} \cdot \smashoperator{\int\limits_{{[0,T_X]} \times {[0,T_Y]}}} I_{r_X m_X}(t_X) I_{r_Y m_Y}(t_Y) I_{\beta_X-1}(T_X-t_X) I_{\beta_Y-1}(T_Y-t_Y) \: dt_X dt_Y \end{aligned} \\[2ex]
=& \hspace{-1em} \sum_{\substack{(m_X, m_Y) \in \mathbb{N}^2 \\ r_X m_X + r_Y m_Y = rm}} \hspace{-1em} \begin{aligned}[t] & \left(A_X + R^X_{r_X m_X} \right) \left(A_Y + R^Y_{r_Y m_Y} \right) (T_X+T_Y)^{rm+\beta_X+\beta_Y} \\ 
 & \hspace{1em} \cdot \smashoperator{\int\limits_{{[0,S_X]} \times {[0,S_Y]}}} I_{r_X m_X}(s_X) I_{r_Y m_Y}(s_Y) I_{\beta_X-1}(S_X-s_X) I_{\beta_Y-1}(S_Y-s_Y) \: ds_X ds_Y \end{aligned} \tag{$A$} \label{eq:eqA}
\end{align*}
\endgroup
where $(s_X, s_Y) = (t_X, t_Y)/(T_X + T_Y)$ and $(S_X, S_Y) = (T_X, T_Y)/(T_X + T_Y)$.

% Draft of whole proof
% Construction of each error term
Let $D, D', D''$ denote the following polytopal regions:
\begin{align*}
  D&=\Poly \left( (-S_Y,S_Y),(S_X,S_Y),(S_X,-S_X) \right), \\
 D'&=\Poly \left( (-S_Y,S_Y),(0,S_Y),(0,0) \right), \\
D''&=\Poly \left( (0,0),(S_X,0),(S_X,-S_X) \right).
\end{align*}
where $\Poly(p_1, p_2, \ldots, p_N)$ represents the convex hull of $p_1, p_2, \ldots, p_N \in \mathbb{R}^2$, i.e.:
\begin{equation*}
\Poly(p_1, p_2, \ldots, p_N) = \left\{ p \;\middle\vert\; p=\sum_{n=1}^N a_n p_n, \text{ where } \sum_{n=1}^N a_n=1 , a_n \geq 0 \right\}.
\end{equation*}
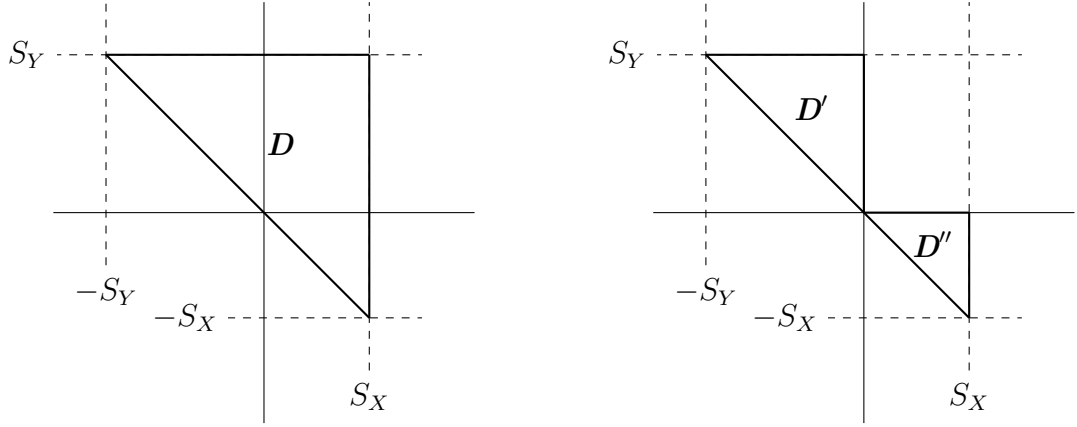
\begin{figure}[h]
\centering
\begin{subfigure}[b]{0.45\textwidth}
\centering
\begin{tikzpicture}[scale=0.7]
\draw (-4,0)--(4,0);
\draw (0,-4)--(0,4);

\draw[thick] (-3,3)--(2,3);
\draw[thick] (2,-2)--(2,3);
\draw[thick] (-3,3)--(2,-2);

\draw[dashed] (-3,-1)--(-3,4);
\draw[dashed] (-1,-2)--(3,-2);
\draw[dashed] (-4,3)--(3,3);
\draw[dashed] (2,-3)--(2,4);

\node[rectangle, fill=white] at (-3, -1-0.5) {$-S_Y$};
\node[rectangle, fill=white] at (-1-0.5, -2) {$-S_X$};
\node[rectangle, fill=white] at (-4-0.5, 3) {$S_Y$};
\node[rectangle, fill=white] at (2, -3-0.5) {$S_X$};

\node at (0.3,1.3) {$\boldemph{D}$};
\end{tikzpicture}
\end{subfigure}
\hfill
\begin{subfigure}[b]{0.45\textwidth}
\centering
\begin{tikzpicture}[scale=0.7]
\draw (-4,0)--(4,0);
\draw (0,-4)--(0,4);

\draw[thick] (-3,3)--(0,3);
\draw[thick] (0,0)--(0,3);
\draw[thick] (2,-2)--(2,0);
\draw[thick] (0,0)--(2,0);
\draw[thick] (-3,3)--(2,-2);

\draw[dashed] (-3,-1)--(-3,4);
\draw[dashed] (-1,-2)--(3,-2);
\draw[dashed] (-4,3)--(3,3);
\draw[dashed] (2,-3)--(2,4);

\node[rectangle, fill=white] at (-3, -1-0.5) {$-S_Y$};
\node[rectangle, fill=white] at (-1-0.5, -2) {$-S_X$};
\node[rectangle, fill=white] at (-4-0.5, 3) {$S_Y$};
\node[rectangle, fill=white] at (2, -3-0.5) {$S_X$};

\node at (-1,2) {$\boldemph{D}'$};
\node at (1.3,-0.6) {$\boldemph{D}''$};
\end{tikzpicture}
\end{subfigure}
\caption*{Figures of the domains $D$, $D'$ and $D''$}
\end{figure}
Utilizing the multivariate beta function, the desired asymptotic can be expressed as an integral over region $D$:
\begin{align*}
L_{rm} & \defeq \int\limits_D I_{rm}(s_X+s_Y) \; I_{\beta_X-1}(S_X-s_X) \; I_{\beta_Y-1}(S_Y-s_Y) ds_X ds_Y \\
&= I_{rm+\beta_X+\beta_Y}(S_X+S_Y) = \frac{1}{\Gamma(1+rm+\beta_X+\beta_Y)}, \tag{$L$} \label{eq:eqL}
\end{align*}
We now proceed to modify $L_{rm}$ step-by-step to obtain our target expression $\eqref{eq:eqA}$.
First, observe that 
\begin{equation*}
D=D' \cup \left( {[0,S_X]} \times {[0,S_Y]} \right) \cup D'' \subseteq \mathbb{R}^2.
\end{equation*}
A term $E^1_{rm}$ is subtracted from $L_{rm}$ to alter the domain of integration from $D$ to ${[0, S_X]} \times {[0, S_Y]}$. The term $E^1_{rm}$ is defined as:
\begin{equation*}
E^1_{rm} \defeq \smashoperator{\int\limits_{D' \cup D''}} I_{rm}(s_X+s_Y) \; I_{\beta_X-1}(S_X-s_X) \; I_{\beta_Y-1}(S_Y-s_Y) ds_X ds_Y.
\end{equation*}
Next, by comparing the coefficients of 
\begingroup
\allowdisplaybreaks[1]
\begin{align*}
& \sum_{m=0}^\infty \sum_{\substack{(m_X, m_Y) \in \mathbb{N}^2 \\ r_X m_X + r_Y m_Y = rm}} \hspace{-1em} I_{r_X m_X}(s_X) I_{r_Y m_Y}(s_Y) \; t^{rm} \\
=& \left( \frac{1}{r_X} \sum_{k=0}^{r_X-1} e^{\xi_{r_X}^k s_X t} \right) \left( \frac{1}{r_Y} \sum_{\ell=0}^{r_Y-1} e^{\xi_{r_Y}^\ell s_Y t} \right) \\
=&\frac{1}{r_X r_Y} \left( \sum_{k=0}^{r_X-1} \frac{1}{r_Y} e^{\left( \xi_{r_X}^k s_X + \xi_{r_Y}^\ell s_Y \right) t} \right) \\
=& \frac{1}{r_X r_Y} \sum_{m=0}^\infty I_{rm}\left( \xi_{r_X}^k s_X + \xi_{r_Y}^\ell s_Y \right) t^{rm} \\
=& \sum_{m=0}^\infty \Biggl( \frac{r}{r_X r_Y} I_{rm}(s_X + s_Y) + \frac{1}{r_X r_Y} \sum_{\substack{0 \leq k < r_X \\ 0 \leq \ell < r_Y \\ \xi_{r_X}^k \neq \xi_{r_Y}^\ell }} I_{rm} \left( \xi_{r_X}^k s_X + \xi_{r_Y}^\ell s_Y \right) \Biggr) t^{rm},
\end{align*}
\endgroup
where $\xi_r^n=e^{2 \pi \mathtt{i}\frac{n}{r}}$, we arrive at the equation
\begin{align*}
& \sum_{\substack{(m_X, m_Y) \in \mathbb{N}^2 \\ r_X m_X + r_Y m_Y = rm}} \hspace{-1em} I_{r_X m_X}(s_X) I_{r_Y m_Y}(s_Y) \\
&= \frac{r}{r_X r_Y} I_{rm}(s_X + s_Y) + \frac{1}{r_X r_Y} \sum_{\substack{0 \leq k < r_X \\ 0 \leq \ell < r_Y \\ \xi_{r_X}^k \neq \xi_{r_Y}^\ell }} I_{rm} \left( \xi_{r_X}^k s_X + \xi_{r_Y}^\ell s_Y \right).
\end{align*}
The preceding equation relates $I_{rm}(s_X + s_Y)$ with $\sum I_{r_X m_X}(s_X) I_{r_Y m_Y}(s_Y)$. Consequently, the corresponding term of modification to $L_{rm}$ is:
\begin{equation*}
E^2_{rm} \defeq \hspace{-1em} \int\limits_{{[0,S_X]} \times {[0,S_Y]}} \sum_{\substack{0 \leq k < r_X \\ 0 \leq \ell < r_Y \\ \xi_{r_X}^k \neq \xi_{r_Y}^\ell }} \begin{aligned}[t] & I_{rm} ( \xi_{r_X}^k s_X+ \xi_{r_Y}^\ell s_Y ) \\
 & \hspace{2em} \cdot I_{\beta_X-1}(S_X-s_X) \; I_{\beta_Y-1}(S_Y-s_Y) ds_X ds_Y. \end{aligned}
\end{equation*}
Finally, by 
\begin{equation*}
A_X A_Y + A_Y R^X_{r_X m_X} + A_X R^Y_{r_Y m_Y} + R^X_{r_X m_X} R^Y_{r_Y m_Y} = \left( A_X + R^X_{r_X m_X} \right) \left( A_Y + R^Y_{r_Y m_Y} \right),
\end{equation*}
the final term to add to $L_{rm}$ is:
\begin{equation*}
E^3_{rm} \defeq \hspace{-2em} \sum_{\substack{(m_X, m_Y) \in \mathbb{N}^2 \\ r_X m_X + r_Y m_Y = rm}} \hspace{-1em} \begin{aligned}[t] & \left( A_Y R^X_{r_X m_X} + A_X R^Y_{r_Y m_Y} + R^X_{r_X m_X} R^Y_{r_Y m_Y} \right) \\
 & \hspace{1em} \cdot \smashoperator{\int\limits_{{[0,S_X]} \times {[0,S_Y]}}} I_{r_X m_X}(s_X) I_{r_Y m_Y}(s_Y) I_{\beta_X-1}(S_X-s_X) I_{\beta_Y-1}(S_Y-s_Y) \: ds_X ds_Y. \end{aligned}
\end{equation*}
Combining the three terms above, we construct our expression $\eqref{eq:eqA}$ using the formula:
\begin{equation*}
\frac{(\alpha^X \alpha^Y)_{rm}}{(T_X+T_Y)^{rm+\beta_X+\beta_Y}} = \frac{r A_X A_Y}{r_X r_Y} L_{rm} - \frac{r A_X A_Y}{r_X r_Y} E^1_{rm} + \frac{A_X A_Y}{r_X r_Y} E^2_{rm} + E^3_{rm}.
\end{equation*}
We claim that:
\begin{align*}
\lim_{m\to\infty} \frac{E^1_{rm}}{L_{rm}} &= 0, \tag{$E1$} \label{eq:eqE1} \\[1ex]
\lim_{m\to\infty} \frac{E^2_{rm}}{L_{rm}} &= 0, \tag{$E2$} \label{eq:eqE2} \\[1ex]
\lim_{m\to\infty} \frac{E^3_{rm}}{L_{rm}} &= 0. \tag{$E3$} \label{eq:eqE3}
\end{align*}

%% TriangleError 

\subsubsection*{(Checking of $\eqref{eq:eqE1}$:)}

We will first verify that $\lim_{m\to\infty} E^1_{rm}/L_{rm}=0$. Recall that 
\begin{align*}
\frac{E^1_{rm}}{L_{rm}} =& \Gamma \left( 1+rm+\beta_X+\beta_Y \right) \\[-1ex]
& \hspace{1em} \cdot \smashoperator{\int\limits_{D' \cup D''}} I_{rm}(s_X+s_Y) \; I_{\beta_X-1}(S_X-s_X) \; I_{\beta_Y-1}(S_Y-s_Y) ds_X ds_Y. 
\end{align*}
We begin by estimating the asmyptotic bahaviour of the integral $E^1_{rm}$ over the region $D'$:
\begingroup
\allowdisplaybreaks[1]
\begin{align*}
 & \Biggl\lVert \begin{aligned}[t] & \Gamma \left( 1+rm+\beta_Y \right) (rm)^{\beta_X} \\
 & \hspace{1em} \cdot \int\limits_{D'} I_{rm}(s_X+s_Y) \: I_{\beta_X-1}(S_X-s_X) \: I_{\beta_Y-1}(S_Y-s_Y) \; ds_X ds_Y \Biggr\rVert \end{aligned} \\[1ex]
=& \Biggl\lVert \begin{aligned}[t] & \Gamma \left( 1+rm+\beta_Y \right) (rm)^{\beta_X} \\
 & \hspace{1em} \cdot \int\limits_{-S_Y}^0 \int\limits_{-s_X}^{S_Y} I_{rm}(s_X+s_Y) \: I_{\beta_X-1}(S_X-s_X) \: I_{\beta_Y-1}(S_Y-s_Y) \; ds_Y ds_X \Biggr\rVert \end{aligned} \\[1ex]
=& \Biggl\lVert \Gamma \left( 1+rm+\beta_Y \right) (rm)^{\beta_X} \int\limits_{-S_Y}^0 I_{rm+\beta_Y}(s_X+S_Y) \: I_{\beta_X-1}(S_X-s_X) \; ds_X \Biggr\rVert \\[1ex]
=& \Biggl\lVert (rm)^{\beta_X} \int\limits_{-S_Y}^0 (s_X+S_Y)^{rm+\beta_Y} \: I_{\beta_X-1}(S_X-s_X) \; ds_X \Biggr\rVert \\[1ex]
\leq& \underbrace{S_Y^{rm}}_{\substack{\text{exponential}\\ \text{as }m\to\infty}} \biggl\lVert \underbrace{(rm)^{\beta_X}}_{\substack{\text{polynomial}\\ \text{as }m\to\infty}} \int_{-S_Y}^0 (s_X+S_Y)^{\beta_Y} \: I_{\beta_X-1}(S_X-s_X) ds_X \biggr\rVert.
\end{align*}
\endgroup
Since $S_Y=T_Y/(T_X+T_Y)<1$, we have 
\begin{align*}
\lim_{m\to\infty} & \Gamma \left( 1+rm+\beta_Y \right) (rm)^{\beta_X} \\[-1ex]
& \hspace{1em} \cdot \int\limits_{D'} I_{rm}(s_X+s_Y) \: I_{\beta_X-1}(S_X-s_X) \: I_{\beta_Y-1}(S_Y-s_Y) \; ds_X ds_Y =0. 
\end{align*}
Because
\begin{equation*}
\lim_{m\to\infty} \frac{\Gamma \left( 1+rm+\beta_X+\beta_Y \right) }{\Gamma \left( 1+rm+\beta_Y \right) (rm)^{\beta_X}} =1,
\end{equation*}
and the integral over region $D''$ can be estimated analogously by interchanging $s_X$ and $s_Y$ in the preceding computation, we obtain the desired limit, which is:
\begin{align*}
\lim_{m\to\infty} \frac{E^1_{rm}}{L_{rm}} =& \lim_{m\to\infty} \begin{aligned}[t] & \Gamma \left( 1+rm+\beta_X+\beta_Y \right) \\[-1ex]
 & \hspace{1em} \cdot \smashoperator{\int\limits_{D' \cup D''}} I_{rm}(s_X+s_Y) \: I_{\beta_X-1}(S_X-s_X) \: I_{\beta_Y-1}(S_Y-s_Y) \; ds_X ds_Y \end{aligned} \\
=& 0.
\end{align*}
This proves $\eqref{eq:eqE1}$.

%% OtherAsymptoticsError

\subsubsection*{(Checking of $\eqref{eq:eqE2}$:)}

Next, we verify that $\lim_{m\to\infty} E^2_{rm}/L_{rm} =0$. Recall that 
\begin{align*}
\frac{E^2_{rm}}{L_{rm}} =& \Gamma \left( 1+rm+\beta_X+\beta_Y \right) \\[-1ex]
 & \hspace{2em} \cdot \smashoperator[l]{\int\limits_{{[0,S_X]} \times {[0,S_Y]}}} \sum_{\substack{0 \leq k < r_X \\ 0 \leq \ell < r_Y \\ \xi_{r_X}^k \neq \xi_{r_Y}^\ell }} \begin{aligned}[t] & I_{rm}(\xi_{r_X}^k s_X+ \xi_{r_Y}^\ell s_Y) \\ & \hspace{1em} \cdot I_{\beta_X-1}(S_X-s_X) \; I_{\beta_Y-1}(S_Y-s_Y) ds_X ds_Y. \end{aligned}
\end{align*}
We then estimate the integral of one of the summands: for $0\leq k < r_X$ and $0\leq \ell < r_Y$ such that $\xi_{r_X}^k \neq \xi_{r_Y}^\ell$, we compute
\begingroup
\allowdisplaybreaks[1]
\begin{align*}
 & \Biggl\lVert \begin{aligned}[t] & \Gamma \left( 1+rm \right) (rm)^{ \beta_X+\beta_Y } \\
 & \hspace{1em} \cdot \smashoperator{\int\limits_{{[0,S_X]} \times {[0,S_Y]}}} I_{rm}(\xi_{r_X}^k s_X+ \xi_{r_Y}^\ell s_Y) \: I_{\beta_X-1}(S_X-s_X) \: I_{\beta_Y-1}(S_Y-s_Y) \; ds_X ds_Y \Biggr\rVert \end{aligned} \\[1ex]
=&(rm)^{2} \Biggl\lVert \; \int\limits_{{[0,S_X]} \times {[0,S_Y]}} \begin{aligned}[t] & \left( \xi_{r_X}^k s_X+ \xi_{r_Y}^\ell s_Y \right)^{rm} \\[-1ex]
 & \hspace{1em} \cdot I_{\beta_X-1} \left(rm(S_X-s_X) \right) \: I_{\beta_Y-1} \left( rm(S_Y-s_Y) \right) \; ds_X ds_Y \Biggr\rVert \end{aligned} \\[1ex]
=& \Biggl\lVert \; \int\limits_{{[0,rm S_X]} \times {[0,rm S_Y]}} \begin{aligned}[t] & \left( \xi_{r_X}^k \left( S_X-\frac{s'_X}{rm} \right) + \xi_{r_Y}^{\ell} \left( S_Y-\frac{s'_Y}{rm} \right) \right)^{rm} \\[-2ex]
 & \hspace{10em} \cdot I_{\beta_X-1}(s'_X) \: I_{\beta_Y-1}(s'_Y) \; ds'_X ds'_Y \Biggr\rVert, \end{aligned}
\end{align*}
\endgroup
where $s'_X=rm(S_X-s_X)$ and $s'_Y=rm(S_Y-s_Y)$.
Since $(s'_X, s'_Y)\in {[0,rmS_X]} \times {[0,rmS_Y]}$, we have
\begin{equation*}
\xi_{r_X}^k \left( S_X-\frac{s'_X}{rm} \right) + \xi_{r_Y}^{\ell} \left( S_Y-\frac{s'_Y}{rm} \right) \in \Poly \left( 0, \xi_{r_X}^k S_X, \xi_{r_Y}^\ell S_Y, \xi_{r_X}^k S_X+\xi_{r_Y}^\ell S_Y \right).
\end{equation*}
\begin{figure}[h]
\centering
\begin{tikzpicture}[scale=5]
\draw (-0.7,0)--(0.7,0);
\draw (0,-0.3)--(0,0.7);

\draw[thick] (0,0)--({0.6*cos(36)},{0.6*sin(36)});
\draw[thick] ({0.6*cos(36)},{0.6*sin(36)})--({0.6*cos(36)+0.4*cos(150)},{0.6*sin(36)+0.4*sin(150)});
\draw[thick] ({0.6*cos(36)+0.4*cos(150)},{0.6*sin(36)+0.4*sin(150)})--({0.4*cos(150)},{0.4*sin(150)});
\draw[thick] ({0.4*cos(150)},{0.4*sin(150)})--(0,0);

\node[rectangle, fill=white] at (0-0.07, 0-0.07) {$0$};
\node[rectangle, fill=white] at ({0.6*cos(36)+0.15}, {0.6*sin(36)}) {$\xi_{r_X}^k S_X$};
\node[rectangle, fill=white] at ({0.6*cos(36)+0.4*cos(150)}, {0.6*sin(36)+0.4*sin(150)+0.1}) {$\xi_{r_X}^k S_X+\xi_{r_Y}^\ell S_Y$};
\node[rectangle, fill=white] at ({0.4*cos(150)-0.15}, {0.4*sin(150)}) {$\xi_{r_Y}^\ell S_Y$};
\node[rectangle, fill=white] at (0.7, 0.7) {$\mathbb{C}$};

\end{tikzpicture}
\caption*{Figure of $\Poly \left( 0, \xi_{r_X}^k S_X, \xi_{r_Y}^\ell S_Y, \xi_{r_X}^k S_X+\xi_{r_Y}^\ell S_Y \right) \subseteq \mathbb{C}$ }
\end{figure}
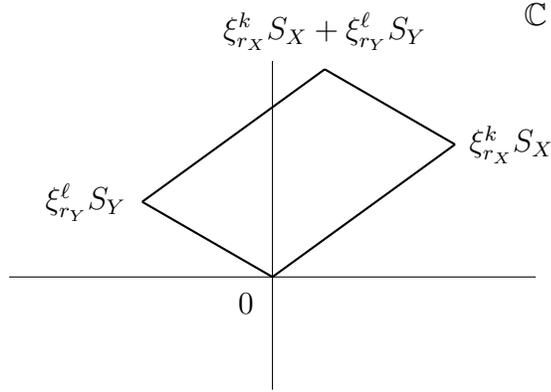
By convexity and given that $\xi_{r_X}^k \neq \xi_{r_Y}^\ell$, for all $(s_X, s_Y)\in {[0,rmS_X]} \times {[0,rmS_Y]}$, we have
\begin{equation*}
\abs*{\xi_{r_X}^k \left( S_X-\frac{s_X}{rm} \right) + \xi_{r_Y}^{\ell} \left( S_Y-\frac{s_Y}{rm} \right)} \leq \max \left( S_X, S_Y, \abs*{ \xi_{r_X}^k S_X+\xi_{r_Y}^\ell S_Y } \right) < 1.
\end{equation*}
Thus, we obtain
\begingroup
\allowdisplaybreaks[1]
\begin{align*}
 & \Biggl\lVert \begin{aligned}[t] & \Gamma \left( 1+rm \right) (rm)^{ \beta_X+\beta_Y } \\[-1ex]
 & \hspace{1em} \cdot \smashoperator{\int\limits_{{[0,S_X]} \times {[0,S_Y]}}} I_{rm}(\xi_{r_X}^k s_X+ \xi_{r_Y}^\ell s_Y) \: I_{\beta_X-1}(S_X-s_X) \: I_{\beta_Y-1}(S_Y-s_Y) \; ds_X ds_Y \Biggr\rVert \end{aligned} \\[1ex]
=& \Biggl\lVert \; \int\limits_{{[0,rm S_X]} \times {[0,rm S_Y]}} \begin{aligned}[t] & \left( \xi_{r_X}^k \left( S_X-\frac{s'_X}{rm} \right) + \xi_{r_Y}^{\ell} \left( S_Y-\frac{s'_Y}{rm} \right) \right)^{rm} \\[-2ex]
 & \hspace{10em} \cdot I_{\beta_X-1}(s'_X) \: I_{\beta_Y-1}(s'_Y) \; ds'_X ds'_Y \Biggr\rVert \end{aligned} \\[-3ex]
\leq& \overbrace{\left( \max \left( S_X, S_Y, \abs*{ \xi_{r_X}^k S_X+\xi_{r_Y}^\ell S_Y } \right) \right)^{rm}}^{\substack{\text{exponential}\\ \text{as }m\to\infty}} \hspace{-1em} \underbrace{\smashoperator[r]{\int\limits_{{[0,rm S_X]} \times {[0,rm S_Y]}}} \norm*{ I_{\beta_X-1}(s_X) \: I_{\beta_Y-1}(s_Y) } \; ds_X ds_Y}_{\substack{\text{polynomial}\\ \text{as }m\to\infty}}.
\end{align*}
\endgroup
As $\max \left( S_X, S_Y, \abs*{ \xi_{r_X}^k S_X+\xi_{r_Y}^\ell S_Y } \right)<1$, we have, for $0\leq k < r_X$ and $0\leq \ell < r_Y$ such that $\xi_{r_X}^k \neq \xi_{r_Y}^\ell$,
\begin{equation*}
\lim_{m\to\infty} \begin{aligned}[t] & \Gamma \left( 1+rm \right) (rm)^{ \beta_X+\beta_Y } \\
 & \hspace{1em} \cdot \smashoperator{\int\limits_{{[0,S_X]} \times {[0,S_Y]}}} I_{rm}(\xi_{r_X}^k s_X+ \xi_{r_Y}^\ell s_Y) \: I_{\beta_X-1}(S_X-s_X) \: I_{\beta_Y-1}(S_Y-s_Y) \; ds_X ds_Y =0 \end{aligned}
\end{equation*}
Given that
\begin{equation*}
\lim_{m\to\infty} \frac{\Gamma \left( 1+rm+\beta_X+\beta_Y \right) }{\Gamma \left( 1+rm \right) (rm)^{ \beta_X+\beta_Y }} =1,
\end{equation*}
and summing the above estimate over $k$ and $\ell$, we obtain
\begin{align*}
\lim_{m\to\infty} \frac{E^2_{rm}}{L_{rm}} =& \lim_{m\to\infty} \Gamma \left( 1+rm+\beta_X+\beta_Y \right) \\[-1ex]
 & \hspace{3em} \cdot \smashoperator[l]{\int\limits_{{[0,S_X]} \times {[0,S_Y]}}} \sum_{\substack{0 \leq k < r_X \\ 0 \leq \ell < r_Y \\ \xi_{r_X}^k \neq \xi_{r_Y}^\ell }} \begin{aligned}[t] & I_{rm}(\xi_{r_X}^k s_X+ \xi_{r_Y}^\ell s_Y) \\
 & \hspace{1em} \cdot I_{\beta_X-1}(S_X-s_X) \; I_{\beta_Y-1}(S_Y-s_Y) ds_X ds_Y \end{aligned} \\
=& 0.
\end{align*}
This proves $\eqref{eq:eqE2}$.

%% ErrorTermFramework

\subsubsection*{(Checking of $\eqref{eq:eqE3}$:)}

We now proceed to calculate the term $E^3_{rm}$. Recall that
\begin{equation*}
E^3_{rm} \defeq \hspace{-2em} \sum_{\substack{(m_X, m_Y) \in \mathbb{N}^2 \\ r_X m_X + r_Y m_Y = rm}} \hspace{-1em} \begin{aligned}[t] & \left( A_Y R^X_{r_X m_X} + A_X R^Y_{r_Y m_Y} + R^X_{r_X m_X} R^Y_{r_Y m_Y} \right) \\ & \hspace{1em} \cdot \smashoperator{\int\limits_{{[0,S_X]} \times {[0,S_Y]}}} I_{r_X m_X}(s_X) I_{r_Y m_Y}(s_Y) I_{\beta_X-1}(S_X-s_X) I_{\beta_Y-1}(S_Y-s_Y) \: ds_X ds_Y. \end{aligned}
\end{equation*}
Define $\mathcal{E}_{rm}(s_X, s_Y)$ and $\mathcal{E}(r_X m_X, r_Y m_Y)$ as follows:
\begin{equation*}
E^3_{rm} = \hspace{-1em} \int\limits_{{[0,S_X]} \times {[0,S_Y]}} \hspace{-1em} \mathcal{E}_{rm}(s_X, s_Y) \; ds_X ds_Y,
\end{equation*}
\begin{equation*}
\mathcal{E}_{rm}(s_X, s_Y) \defeq \hspace{-1em} \sum_{\substack{(m_X, m_Y) \in \mathbb{N}^2 \\ r_X m_X + r_Y m_Y = rm}} \hspace{-1em} \mathcal{E}(r_X m_X, r_Y m_Y),
\end{equation*}
\vspace{-2ex}
\begin{align*}
\mathcal{E}(r_X m_X, r_Y m_Y) \defeq & \left( A_Y R^X_{r_X m_X} + A_X R^Y_{r_Y m_Y} + R^X_{r_X m_X} R^Y_{r_Y m_Y} \right) \\
& \hspace{1em} \cdot I_{r_X m_X}(s_X) \: I_{r_Y m_Y}(s_Y) \: I_{\beta_X-1}(S_X-s_X) \: I_{\beta_Y-1}(S_Y-s_Y).
\end{align*}
Note that $\mathcal{E}(r_X m_X, r_Y m_Y)$ depends on $(s_X, s_Y)$. For simplicity, We suppress this dependence in the notation for simplicity.
Let $\mathring{\mathcal{E}}_{rm, M_X, M_Y}(s_X, s_Y)$ denotes the sum of $\mathcal{E}(r_X m_X, r_Y m_Y)$ over the interior of line $r_X m_X + r_Y m_Y = rm$, specifically:
\begin{equation*}
\mathring{\mathcal{E}}_{rm, M_X, M_Y}(s_X, s_Y) \defeq \hspace{-1em} \sum_{\substack{(m_X, m_Y) \in \mathbb{N}^2 \\ r_X m_X + r_Y m_Y = rm \\ m_X \geq M_X, m_Y \geq M_Y}} \hspace{-1em} \mathcal{E}(r_X m_X, r_Y m_Y).
\end{equation*}
Subsequently, we decompose $E^3_{rm}$ into the two terms, $E^4_{rm, M_X, M_Y}$ and $E^5_{rm, M_X, M_Y}$, corresponding to the decomposition of $\mathcal{E}$ into boundary part and interior parts, respectively:
\begin{align*}
 E^3_{rm} =& \hspace{-1em} \int\limits_{{[0,S_X]} \times {[0,S_Y]}} \hspace{-1em} \mathcal{E}_{rm}(s_X, s_Y) \; ds_X ds_Y \\
=& \hspace{-1em} \left. \int\limits_{{[0,S_X]} \times {[0,S_Y]}} \hspace{-1em} \left( \mathcal{E}_{rm}(s_X, s_Y) - \mathring{\mathcal{E}}_{rm, M_X, M_Y}(s_X, s_Y) \right) ds_X ds_Y \right\} \defeq E^4_{rm, M_X, M_Y} \\
 & \hspace{1em} + \hspace{-1em} \left. \int\limits_{{[0,S_X]} \times {[0,S_Y]}} \hspace{-1em} \mathring{\mathcal{E}}_{rm, M_X, M_Y}(s_X, s_Y) \; ds_X ds_Y. \right\} \defeq E^5_{rm, M_X, M_Y}
\end{align*}
We claim that $E^4_{rm,M_X,M_Y}$ and $E^5_{rm,M_X,M_Y}$ exhibit the following asymptotic properties relative to $L_{rm}$:
\begin{itemize}
\item for all $M_X, M_Y \in \mathbb{N}$, we have 
\begin{equation*}
\lim_{m\to\infty} \frac{E^4_{rm,M_X,M_Y}}{L_{rm}} =0, \tag{$E4$} \label{eq:eqE4}
\end{equation*}
\item for all $\varepsilon >0$, there exist $M_X, M_Y\in\mathbb{N}$ such that
\begin{equation*}
\limsup_{m\to\infty} \norm*{\frac{E^5_{rm,M_X,M_Y}}{L_{rm}}} < \varepsilon. \tag{$E5$} \label{eq:eqE5}
\end{equation*}
\end{itemize}
Assuming these two claims hold, we can then combine them to estimate $E^3_{rm}$. Given an arbitrary $\varepsilon >0$, we select $M_X, M_Y$ as specified above, resulting in:
\begin{align*}
& \limsup_{m\to\infty} \norm*{\frac{E^3_{rm}}{L_{rm}}} \\
=& \limsup_{m\to\infty} \norm*{\frac{E^4_{rm, M_X, M_Y} + E^5_{rm, M_X, M_Y}}{L_{rm}}} \\
\leq& \limsup_{m\to\infty} \norm*{\frac{E^5_{rm, M_X, M_Y}}{L_{rm}}} < \varepsilon.
\end{align*}
Since $\varepsilon >0$ is arbitrary, we conclude that the desired asymptotic property of $E^3_{rm}$ holds, given $\eqref{eq:eqE4}$ and $\eqref{eq:eqE5}$:
\begin{equation*}
\lim_{m\to\infty} \frac{E^3_{rm}}{L_{rm}} = \limsup_{m\to\infty} \norm*{\frac{E^3_{rm}}{L_{rm}}} = 0.
\end{equation*}

%% BoundaryError

\subsubsection*{(Checking of $\eqref{eq:eqE4}$:)}
We now proceed to prove $\eqref{eq:eqE4}$.
For $m_X^\circ \in \mathbb{N}$, let $\mathcal{E}'_{rm}(m_X^\circ)$ denote the value of $\mathcal{E}(m_X^\circ, m_Y)$ for the solution to the Diophantine equation $r_X m_X^\circ + r_Y m_Y = rm$, and $0$ no solution exists. Simlarly, for $m_Y^\circ \in \mathbb{N}$, let $\mathcal{E}''_{rm}(m_Y^\circ)$ denote the value of $\mathcal{E}(m_X, m_Y^\circ)$ for the solution to the Diophantine equation $r_X m_X + r_Y m_Y^\circ = rm$. In other words, $\mathcal{E}'_{rm}(m_X^\circ)$ and $\mathcal{E}''_{rm}(m_Y^\circ)$ are defined by:
\begin{equation*}
\mathcal{E}'_{rm}(m_X^\circ) = \begin{cases*} 
\mathcal{E}(r_X m_X^\circ, rm-r_X m_X^\circ) & if $r_Y \mid (rm-r_X m_X^\circ)$, \\
0 & otherwise,
\end{cases*}
\end{equation*}
\begin{equation*}
\mathcal{E}''_{rm}(m_Y^\circ) = \begin{cases*}
\mathcal{E}(rm-r_Y m_Y^\circ, r_Y m_Y^\circ) & if $r_X \mid (rm-r_Y m_Y^\circ)$, \\
0 & otherwise.
\end{cases*}
\end{equation*}
Due to the identity
\begin{equation*}
\sum_{\substack{(m_X, m_Y) \in \mathbb{N}^2 \\ r_X m_X + r_Y m_Y = rm \\ m_X < M_X}} \hspace{-1em} \mathcal{E}(r_X m_X, r_Y m_Y)=  \sum_{m_X^\circ=0}^{M_X-1} \mathcal{E}'_{rm}(m_X^\circ),
\end{equation*}
we can express $\mathcal{E}_{rm}(s_X, s_Y)-\mathring{\mathcal{E}}_{rm, M_X, M_Y}(s_X, s_Y)$ as a finite sum. Specifically, for large $m$ such that all three index sets
\begin{itemize} 
\item $\{(m_X, m_Y)\in\mathbb{N} \mid r_X m_X + r_Y m_Y = rm, m_X < M_X\}$, 
\item $\{(m_X, m_Y)\in\mathbb{N} \mid r_X m_X + r_Y m_Y = rm, m_X \geq M_X, m_Y \geq M_Y\}$, and 
\item $\{(m_X, m_Y)\in\mathbb{N} \mid r_X m_X + r_Y m_Y = rm, m_Y < M_Y\}$
\end{itemize}
are pairwise disjoint, we have:
\begin{equation*}
\mathcal{E}_{rm}(s_X, s_Y) = \sum_{m_X^\circ=0}^{M_X-1} \mathcal{E}'_{rm}(m_X^\circ) + \mathring{\mathcal{E}}_{rm, M_X, M_Y}(s_X, s_Y) + \sum_{m_Y^\circ=0}^{M_Y-1} \mathcal{E}''_{rm}(m_Y^\circ).
\end{equation*}
Therefore,
\begin{align*}
E^4_{rm} =& \hspace{-1em} \int\limits_{{[0,S_X]} \times {[0,S_Y]}} \hspace{-1em} \left( \mathcal{E}_{rm}(s_X, s_Y) - \mathring{\mathcal{E}}_{rm, M_X, M_Y}(s_X, s_Y) \right) ds_X ds_Y \\
=& \hspace{-1em} \int\limits_{{[0,S_X]} \times {[0,S_Y]}} \hspace{-0.5em} \left( \sum_{m_X^\circ=0}^{M_X-1} \mathcal{E}'_{rm}(m_X^\circ) + \sum_{m_Y^\circ=0}^{M_Y-1} \mathcal{E}''_{rm}(m_Y^\circ) \right) ds_X ds_Y.
\end{align*}
Since $E^4_{rm}$ can be expressed as a finite sum of integrals, we compare the asymptotic behaviour of each summand relative to $L_{rm}$.
For each $\mathcal{E}'_{rm}(m_X^\circ)$, consider the non-zero subsequence, defined as $\mathcal{E}(r_X m_X^\circ, rm-r_X m_X^\circ)$:
\begingroup
\allowdisplaybreaks[1]
\begin{align*}
 & \frac{1}{L_{rm}} \int\limits_{{[0,S_X]} \times {[0,S_Y]}} \hspace{-1em} \mathcal{E} \left( r_X m_X^\circ, rm - r_X m_X^\circ \right) \; ds_X ds_Y \\[1ex]
=& \Gamma \left( 1+ rm + \beta_X + \beta_Y \right) \: I_{r_X m_X^\circ+\beta_X}(S_X) \: I_{rm - r_X m_X^\circ+\beta_Y}(S_Y) \\
 & \hspace{1em} \cdot \left( A_Y R^X_{r_X m_X^\circ} + A_X R^Y_{rm - r_X m_X^\circ} + R^X_{r_X m_X^\circ} R^Y_{rm - r_X m_X^\circ} \right) \\[2ex]
=& \overbrace{S_Y^{rm - r_X m_X^\circ+\beta_Y}}^{\text{exponential as }m\to\infty} \cdot \overbrace{\frac{\Gamma \left( 1 + rm + \beta_X + \beta_Y \right)}{\Gamma \left( 1 + rm - r_X m_X^\circ+\beta_Y \right)}}^{\text{polynomial as }m\to\infty} \\[1ex]
 & \hspace{1em} \cdot \underbrace{\left( A_Y R^X_{r_X m_X^\circ} + A_X R^Y_{rm - r_X m_X^\circ} + R^X_{r_X m_X^\circ} R^Y_{rm - r_X m_X^\circ} \right) I_{r_X m_X^\circ + \beta_X}(S_X)}_{\text{converge as }m\to\infty}.
\end{align*}
\endgroup
As $S_Y < 1$, we have, for all $m_X^\circ \in \mathbb{N}$,
\begin{equation*}
\lim_{m\to\infty} \frac{1}{L_{rm}} \int\limits_{{[0,S_X]} \times {[0,S_Y]}} \hspace{-1em} \mathcal{E}'_{rm}(m_X^\circ) \; ds_X ds_Y =0.
\end{equation*}
Similarly, for all $m_Y^\circ \in \mathbb{N}$,
\begin{equation*}
\lim_{m\to\infty} \frac{1}{L_{rm}} \int\limits_{{[0,S_X]} \times {[0,S_Y]}} \hspace{-1em} \mathcal{E}''_{rm}(m_Y^\circ) \; ds_X ds_Y =0.
\end{equation*}
By summing the above limits, we have, for arbitrary $M_X, M_Y \in \mathbb{N}$, 
\begin{align*}
\lim_{m\to\infty} \frac{E^4_{rm, M_X, M_Y}}{L_{rm}} =& \frac{1}{L_{rm}} \int\limits_{{[0,S_X]} \times {[0,S_Y]}} \hspace{-0.5em} \left( \sum_{m_X^\circ=0}^{M_X-1} \mathcal{E}'_{rm}(m_X^\circ) + \sum_{m_Y^\circ=0}^{M_Y-1} \mathcal{E}''_{rm}(m_Y^\circ) \right) ds_X ds_Y \\
=& 0.
\end{align*}
This proves $\eqref{eq:eqE4}$.

%% InterirorError

\subsubsection*{(Checking of $\eqref{eq:eqE5}$:)}

Finally, $\eqref{eq:eqE5}$ is the last statement we need to check. Recall that 
\begin{align*}
\frac{E^5_{rm,M_X,M_Y}}{L_{rm}} =& \Gamma \left( 1+rm+\beta_X+\beta_Y \right) \\
& \hspace{1em} \cdot \hspace{-1em} \int\limits_{{[0,S_X]} \times {[0,S_Y]}} \sum_{\substack{(m_X, m_Y) \in \mathbb{N}^2 \\ r_X m_X + r_Y m_Y = rm \\ m_X \geq M_X, m_Y \geq M_Y}} \hspace{-1em} \mathcal{E}(r_X m_X, r_Y m_Y) \; ds_X ds_Y .
\end{align*}
We compute
\begingroup
\allowdisplaybreaks[1]
\begin{align*}
& \Bigg\lVert \Gamma \left( 1+rm \right) (rm)^{\beta_X+\beta_Y} \hspace{-1em} \int\limits_{{[0,S_X]} \times {[0,S_Y]}} \sum_{\substack{(m_X, m_Y) \in \mathbb{N}^2 \\ r_X m_X + r_Y m_Y = rm \\ m_X \geq M_X, m_Y \geq M_Y}} \hspace{-1em} \mathcal{E}(r_X m_X, r_Y m_Y) \; ds_X ds_Y \Bigg\rVert \\[2ex]
\leq& (rm)^2 \hspace{-1em} \max_{\substack{(m_X, m_Y) \in \mathbb{N}^2 \\ r_X m_X + r_Y m_Y = rm \\ m_X \geq M_X, m_Y \geq M_Y}} \norm*{A_Y R^X_{r_X m_X} + A_X R^Y_{r_Y m_Y} + R^X_{r_X m_X} R^Y_{r_Y m_Y}} \\
 & \hspace{1em} \cdot \hspace{-1em} \int\limits_{{[0,S_X]} \times {[0,S_Y]}} \hspace{-1em} \begin{aligned}[t] & \Gamma \left( 1+rm \right) \hspace{-1em} \sum_{\substack{(m_X, m_Y) \in \mathbb{N}^2 \\ r_X m_X + r_Y m_Y = rm}} \hspace{-1em} I_{r_X m_X}(s_X) \: I_{r_Y m_Y}(s_Y) \\
 & \hspace{1em} \cdot \norm*{ I_{\beta_X-1} \left( rm(S_X-s_X) \right) \: I_{\beta_Y-1} \left( rm(S_Y-s_Y) \right) } \; ds_X ds_Y \end{aligned} \\[2ex]
\leq& \hspace{-1em} \max_{\substack{(m_X, m_Y) \in \mathbb{N}^2 \\ r_X m_X + r_Y m_Y = rm \\ m_X \geq M_X, m_Y \geq M_Y}} \norm*{A_Y R^X_{r_X m_X} + A_X R^Y_{r_Y m_Y} + R^X_{r_X m_X} R^Y_{r_Y m_Y}} \\
 & \hspace{1em} \cdot \frac{1}{r_X r_Y} \hspace{-1em} \int\limits_{{[0,rm S_X]} \times {[0,rm S_Y]}} \sum_{\substack{0 \leq k < r_X \\ 0 \leq \ell < r_Y}} \begin{aligned}[t] & \left( \xi_{r_X}^k \left(S_X-\frac{s'_X}{rm} \right)+ \xi_{r_Y}^\ell \left(S_Y-\frac{s'_Y}{rm} \right) \right)^{rm} \\
 & \hspace{5em} \cdot \norm*{ I_{\beta_X-1}(s'_X) \: I_{\beta_Y-1}(s'_Y) } \; ds'_X ds'_Y, \end{aligned}
\end{align*}
\endgroup
where $s'_X=rm(S_X-s_X)$ and $s'_Y=rm(S_Y-s_Y)$.
Using the same method employed during the computation for $\eqref{eq:eqE2}$, we can show that 
\begin{equation*}
\lim_{m\to\infty} \hspace{-1em} \int\limits_{{[0,rm S_X]} \times {[0,rm S_Y]}} \sum_{\substack{0 \leq k < r_X \\ 0 \leq \ell < r_Y \\ \xi_{r_X}^k \neq \xi_{r_Y}^\ell }} \begin{aligned}[t] & \left( \xi_{r_X}^k \left(S_X-\frac{s_X}{rm} \right)+ \xi_{r_Y}^\ell \left(S_Y-\frac{s_Y}{rm} \right) \right)^{rm} \\
 & \hspace{5em} \cdot \norm*{ I_{\beta_X-1}(s_X) \: I_{\beta_Y-1}(s_Y) } \; ds_X ds_Y = 0. \end{aligned}
\end{equation*}
The remaining term can be estimated as follows:
\begingroup
\allowdisplaybreaks[1]
\begin{align*}
 & \hspace{-1em} \max_{\substack{(m_X, m_Y) \in \mathbb{N}^2 \\ r_X m_X + r_Y m_Y = rm \\ m_X \geq M_X, m_Y \geq M_Y}} \norm*{A_Y R^X_{r_X m_X} + A_X R^Y_{r_Y m_Y} + R^X_{r_X m_X} R^Y_{r_Y m_Y}} \\
 & \hspace{1em} \cdot \frac{r}{r_X r_Y} \hspace{-1em} \int\limits_{{[0,rm S_X]} \times {[0,rm S_Y]}} \hspace{-0.5em} \left( S_X-\frac{s'_X}{rm} + S_Y-\frac{s'_Y}{rm} \right)^{rm} \hspace{-0.5em} \norm*{ I_{\beta_X-1}(s'_X) \: I_{\beta_Y-1}(s'_Y) } \; ds'_X ds'_Y \\[3ex]
\leq& \hspace{-1em} \max_{\substack{(m_X, m_Y) \in \mathbb{N}^2 \\ r_X m_X + r_Y m_Y = rm \\ m_X \geq M_X, m_Y \geq M_Y}} \norm*{A_Y R^X_{r_X m_X} + A_X R^Y_{r_Y m_Y} + R^X_{r_X m_X} R^Y_{r_Y m_Y}} \\
 & \hspace{1em} \cdot \frac{r}{r_X r_Y} \hspace{-1em} \int\limits_{{[0,rm S_X]} \times {[0,rm S_Y]}} \hspace{-1em} e^{-(s'_X+s'_Y)} \norm*{ I_{\beta_X-1}(s'_X) \: I_{\beta_Y-1}(s'_Y) } \; ds'_X ds'_Y.
\end{align*}
\endgroup
Since the integral in the final line above converges, let $C$ denote its value, i.e.:
\begin{equation*}
\int\limits_{{[0,\infty)} \times {[0,\infty)}} \hspace{-1em} e^{-(s'_X+s'_Y)} \norm*{ I_{\beta_X-1}(s'_X) \: I_{\beta_Y-1}(s'_Y) } \; ds'_X ds'_Y = C < \infty.
\end{equation*}
Let $\varepsilon >0$ be arbitrary. Since $\lim_{m_X \to\infty} R^X_{r_X m_X}=\lim_{m_Y \to\infty} R^Y_{r_Y m_Y}=0$, there exist $M_X, M_Y \in\mathbb{N}$ such that for all $m_X \geq M_X$ and $m_Y \geq M_Y$, 
\begin{equation*}
\norm*{A_Y R^X_{r_X m_X} + A_X R^Y_{r_Y m_Y} + R^X_{r_X m_X} R^Y_{r_Y m_Y}} < \frac{r_X r_Y}{r} \: \frac{1}{C} \: \varepsilon.
\end{equation*}
Using
\begin{equation*}
\lim_{m\to\infty} \frac{\Gamma \left( 1+rm+\beta_X+\beta_Y \right)}{\Gamma \left( 1+rm \right) (rm)^{ \beta_X+\beta_Y }} =1,
\end{equation*}
we have verified our final estimation $\eqref{eq:eqE5}$, which state that for all $\varepsilon >0$, there exist $M_X, M_Y \in\mathbb{N}$ such that
\begin{equation*}
\limsup_{m\to\infty} \norm*{\frac{E^5_{rm, M_X, M_Y}}{L_{rm}}} < \varepsilon.
\end{equation*}
This proves $\eqref{eq:eqE5}$.

%% CombiningMain

\subsubsection*{(Combining all the results:)}

Combining the equations $\eqref{eq:eqA}$, $\eqref{eq:eqL}$, $\eqref{eq:eqE1}$, $\eqref{eq:eqE2}$ and $\eqref{eq:eqE3}$, we obtain the following as $m\to\infty$:
\begin{align*}
(\alpha^X \alpha^Y)_{rm} =& \hspace{-1em} \sum_{\substack{(m_X, m_Y) \in \mathbb{N}^2 \\ r_X m_X + r_Y m_Y = rm}} \hspace{-1em} \alpha^X_{r_X m_X} \alpha^Y_{r_Y m_Y} \\[1ex]
=& \left( \frac{r A_X A_Y}{r_X r_Y} L_{rm} - \frac{r A_X A_Y}{r_X r_Y} E^1_{rm} + \frac{A_X A_Y}{r_X r_Y} E^2_{rm} + E^3_{rm} \right) \; (T_X+T_Y)^{rm+\beta_X+\beta_Y} \\[1ex]
=& L_{rm} \; (T_X+T_Y)^{rm+\beta_X+\beta_Y} \cdot \left( \frac{r A_X A_Y}{r_X r_Y} +\mathit{o}(1) \right) \\[1ex]
=& \frac{(T_X+T_Y)^{rm+\beta_X+\beta_Y}}{\Gamma\left(1+rm+\beta_X+\beta_Y \right)} \cdot \left( \frac{r A_X A_Y}{r_X r_Y}+\mathit{o}(1) \right).
\end{align*}
Therefore, $\alpha^X(t) \alpha^Y(t)$ is $\left( T_X+T_Y, r \frac{A_X}{r_X} \frac{A_Y}{r_Y} \right)$-scaled asymptotically Mittag-Leffler.

%% General $\beta$

\subsubsection*{(Extending to general $\beta$:)}

We now extend the result to general $\beta_X$ and $\beta_Y$. Write $\alpha^X(t)$ and $\alpha^Y(t)$ in the form
\begin{equation*}
\alpha^X(t) = \sum_{m_X=0}^{M_X} \alpha^X_{r_X m_X} t^{r_X m_X+\beta_X} + \sum_{m_X=0}^\infty \alpha^X_{r_X (m_X+M_X)} t^{r_X m_X+(r_X M_X + \beta_X)},
\end{equation*}
\begin{equation*}
\alpha^Y(t) = \sum_{m_Y=0}^{M_Y} \alpha^Y_{r_Y m_Y} t^{r_Y m_Y+\beta_Y} + \sum_{m_Y=0}^\infty \alpha^Y_{r_Y (m_Y+M_Y)} t^{r_X m_Y+(r_Y M_Y + \beta_X)}.
\end{equation*}
so that the $H^0$ components of both $r_X M_X + \beta_X$ and $r_Y M_Y + \beta_Y$ have positive real parts. Then, the coefficients of the product $\alpha^X(t)\alpha^Y(t)$ can be expressed as:
\begin{align*}
 & (\alpha^X\alpha^Y)_{rm} \\
=& \Biggl( \sum_{\substack{\substack{(m_X, m_Y) \in \mathbb{N}^2 \\ r_X m_X + r_Y m_Y = rm \\ m_X < M_X}}} + \sum_{\substack{\substack{(m_X, m_Y) \in \mathbb{N}^2 \\ r_X m_X + r_Y m_Y = rm \\ m_X \geq M_X, m_Y \geq M_Y}}} + \sum_{\substack{\substack{(m_X, m_Y) \in \mathbb{N}^2 \\ r_X m_X + r_Y m_Y = rm \\ m_Y < M_Y}}} \Biggr) \alpha^X_{r_X m_X} \alpha^Y_{r_Y m_Y}.
\end{align*}
Since the middle summation is the sum of coefficients of the product of the tails of the original series, and the property of being asymptotically Mittag-Leffler is preserved under taking tails (Proposition \ref{propAML} (4)), we have, as $m\to\infty$:
\begin{equation*}
\sum_{\substack{\substack{(m_X, m_Y) \in \mathbb{N}^2 \\ r_X m_X + r_Y m_Y = rm \\ m_X \geq M_X, m_Y \geq M_Y}}} \hspace{-1em} \alpha^X_{r_X m_X} \alpha^Y_{r_Y m_Y} = \frac{(T_X+T_Y)^{rm+\beta_X+\beta_Y}}{\Gamma \left( 1+rm+\beta_X+\beta_Y \right)} \cdot \left( r\frac{A_X}{r_X}\frac{A_Y}{r_Y}+\mathit{o}(1) \right).
\end{equation*}
Estimations of the left and right sums are performed in a similar fashion to the computation of $\eqref{eq:eqE4}$. By expressing each sum into a finite sum over fixed $m_X=m_X^\circ$, and then comparing the asymptotic behaviour of each summand with the desired asymptotic, we have, as $m\to\infty$:
\begin{align*}
\sum_{\substack{\substack{(m_X, m_Y) \in \mathbb{N}^2 \\ r_X m_X + r_Y m_Y = rm \\ m_X < M_X}}} \hspace{-1em} \alpha^X_{r_X m_X} \alpha^Y_{r_Y m_Y} =& \hspace{-1em} \sum_{\substack{0 \leq m_X^\circ < M_X \\ r_Y \mid (rm-r_X m_X^\circ)}} \hspace{-1em} \alpha^X_{r_X m_X^\circ} \alpha^Y_{rm-r_X m_X^\circ} \\
=& \hspace{-1em} \sum_{\substack{0 \leq m_X^\circ < M_X \\ r_Y \mid (rm-r_X m_X^\circ)}} \hspace{-1em} \alpha^X_{r_X m_X^\circ} \frac{T_Y^{rm-r_X m_X^\circ+\beta_Y}}{\Gamma \left( 1+rm-r_X m_X^\circ+\beta_Y \right)} \cdot \left( A_Y + \mathit{o}(1) \right) \\[2ex]
=& \frac{(T_X+T_Y)^{rm+\beta_X+\beta_Y}}{\Gamma \left( 1+rm+\beta_X+\beta_Y \right)} \cdot \mathit{o}(1).
\end{align*}
Summing all three asymptotics, we obtain our proposed asymptotic behaviour for the coefficients of the product of two asymptotically Mittag-Leffler series with general cohomology class $\beta$, as $m\to\infty$:
\begin{equation*}
(\alpha^X\alpha^Y)_{rm} = \frac{(T_X+T_Y)^{rm+\beta_X+\beta_Y}}{\Gamma \left( 1+rm+\beta_X+\beta_Y \right)} \cdot \left( r\frac{A_X}{r_X}\frac{A_Y}{r_Y}+\mathit{o}(1) \right).
\end{equation*}

\end{proof}

% Corollary

\subsubsection*{Application to Product of Manifolds}

We can then apply the result to the product of two Fano manifolds. 

\begin{corollary}\label{corDisGammaIProduct}

Let $X$, $Y$ be two Fano manifolds. Let $J^X(t)$ and $J^Y(t)$ be the $J$-functions of $X$ and $Y$, respectively, (Definition \ref{defJFunction}). If $t^{\frac{1}{2}\dim X} J^X(t)$ and $t^{\frac{1}{2}\dim Y}J^Y(t)$ are both asymptotically Mittag-Leffler (Definition \ref{defaML}), with scalings $(T_X, A_X)$ and $(T_Y, A_Y)$, respectively, then $t^{\frac{1}{2}\dim (X\times Y)} J^{X \times Y}(t)$ is $(T_X+T_Y, r \frac{A_X}{r_X} \frac{A_Y}{r_Y})$-scaled asymptotically Mittag-Leffler, where $r_X$ and $r_Y$ are the Fano indices of $X$ and $Y$, respectively, and $r=\mathrm{GCD} (r_X, r_Y)$.

\end{corollary}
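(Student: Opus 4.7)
The plan is to reduce this corollary to a direct application of Proposition~\ref{propDisGammaIProduct}. The essential geometric input is the product formula for $J$-functions: on the anti-canonical line of $X \times Y$, which by K\"unneth decomposes as $c_1(X \times Y) \log t = \pi_X^* c_1(X) \log t + \pi_Y^* c_1(Y) \log t$, the $J$-function factors as
\begin{equation*}
J^{X \times Y}(t) = J^X(t) \cdot J^Y(t),
\end{equation*}
where the product is taken under the identification $H^*(X \times Y) = H^*(X) \otimes H^*(Y)$. This follows from the splitting of genus-zero stable maps into $X \times Y$ according to their components on $X$ and $Y$, together with the factorization of $e^{\tau/z}$ and the compatibility of the gravitational descendants with the product structure.

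Multiplying both sides by $t^{\frac{1}{2} \dim(X \times Y)} = t^{\frac{1}{2} \dim X} \cdot t^{\frac{1}{2} \dim Y}$, I obtain
\begin{equation*}
t^{\frac{1}{2} \dim(X \times Y)} J^{X \times Y}(t) = \bigl(t^{\frac{1}{2} \dim X} J^X(t)\bigr) \cdot \bigl(t^{\frac{1}{2} \dim Y} J^Y(t)\bigr).
\end{equation*}
Next, I would identify the Fano index of the product: since $H^2(X \times Y, \mathbb{Z}) = H^2(X, \mathbb{Z}) \oplus H^2(Y, \mathbb{Z})$ by K\"unneth and $c_1(X \times Y)$ splits accordingly, the largest positive integer $n$ with $\frac{1}{n} c_1(X \times Y) \in H^2(X \times Y, \mathbb{Z})$ is precisely $\gcd(r_X, r_Y) = r$. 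This matches the setting of Proposition~\ref{propDisGammaIProduct}, where the $r$ in the conclusion is the GCD of the two base powers.

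With these pieces in place, the corollary follows by applying Proposition~\ref{propDisGammaIProduct} to $\alpha^X(t) = t^{\frac{1}{2} \dim X} J^X(t)$ and $\alpha^Y(t) = t^{\frac{1}{2} \dim Y} J^Y(t)$, with base powers $r_X$, $r_Y$ and respective aML data $(T_X, A_X)$, $(T_Y, A_Y)$. The proposition then yields that their product, equal to $t^{\frac{1}{2} \dim(X \times Y)} J^{X \times Y}(t)$, is $\bigl(T_X + T_Y, \, \frac{r}{r_X r_Y} A_X \otimes A_Y\bigr)$-scaled asymptotically Mittag-Leffler, which is the stated claim. The only nontrivial step is invoking the product formula for $J$-functions; this is a standard consequence of the corresponding factorization for Gromov-Witten invariants, and beyond it the corollary is a direct corollary of the preceding proposition.
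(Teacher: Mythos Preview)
Your proposal is correct and follows essentially the same approach as the paper: set $\alpha^X(t) = t^{\frac{1}{2}\dim X} J^X(t)$ and $\alpha^Y(t) = t^{\frac{1}{2}\dim Y} J^Y(t)$ and invoke Proposition~\ref{propDisGammaIProduct}. You are in fact more explicit than the paper about the necessary ingredients (the product formula $J^{X\times Y}=J^X\cdot J^Y$ and the identification of the Fano index of $X\times Y$ with $\gcd(r_X,r_Y)$), which the paper's one-line proof leaves implicit.
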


\begin{proof}
This follows directly from the proposition by setting $\alpha^X(t) = t^{\frac{1}{2}\dim X} J^X(t)$ and $\alpha^Y(t) = t^{\frac{1}{2}\dim Y} J^Y(t)$.
\end{proof}

Note that when $X$ and $Y$ both satisfy the Gamma conjecture I, i.e., $A_X \propto \hat{\Gamma}_X$ and $A_Y \propto \hat{\Gamma}_Y$, then $A_{X \times Y} \propto \hat{\Gamma}_X \hat{\Gamma}_Y = \hat{\Gamma}_{X \times Y}$. In this case, $X \times Y$ also satisfies the Gamma conjecture I. 
}

% \newpage

\section{Hypersurface}\label{sec:Hypersurface}
{
% Introduction
It is well-established that if a Fano manifold $Z$ is realized as a section of a line bundle proportional to the anti-canonical bundle of another Fano manifold $X$, its $J$-function can be determined by the quantum Lefschetz theorem \cite{MR2276766}. In this section, we aim to prove that the $J$-function, as given by the quantum Lefschetz theorem, is also asymptotically Mittag-Leffler.

% Discrete asymptote preserves upon taking hypersurface
\begin{proposition}\label{DisGammaIHypersurface}

Let $X$ be a Fano manifold with $J^X(t)=J^X(c_1(X) \log t, 1)$ denoting its $J$-function, and let $r_X$ represent the Fano index of $X$ (Definition \ref{defJFunction}). Define $x=\frac{1}{r_X} c_1(X) \in H^2(X,\mathbb{Z})$. Suppose that $t^{\frac{1}{2}\dim X}J^X(t)$ is $(T_X,A_X)$-scaled asymptotically Mittag-Leffler (Definition \ref{defaML}). If $Z$ is a degree $d$ Fano hypersurface within the linear system $\abs{d x}$, where $(0<d<r_X)$, then $t^{\frac{1}{2}\dim Z}J^Z(t)$ is also asymptotically Mittag-Leffler, with scaling $(T_Z,A_Z)$ given by
\begin{equation*}
T_Z = \left( T_X^{r_X} \; \frac{r_Z^{r_Z} \; d^d}{r_X^{r_X}} \right)^{\frac{1}{r_Z}}-c_0, \quad \left( \text{equivalently, } \left( \frac{T_Z+c_0}{r_Z} \right)^{r_Z} = d^d \; \left( \frac{T_X}{r_X} \right)^{r_X} \right)
\end{equation*}
and
\begin{equation*}
A_Z \propto \frac{A_X}{\Gamma \left( 1+dx \right)},
\end{equation*}
where 
\begin{equation*}
c_0 = \begin{dcases*} d! \; \langle [\mathsf{pt}], J^{X}_{r_X} \rangle & when $r_Z=r_X-d=1$, \\ 0 & when $r_Z=r_X-d>1$. \end{dcases*}
\end{equation*}

\end{proposition}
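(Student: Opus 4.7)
The plan is to invoke the quantum Lefschetz theorem to express the coefficients $J^Z_{r_Z m}$ in terms of the coefficients $J^X_{r_X m}$, substitute the $(T_X, A_X)$-aML asymptotic into this expression, and then apply the multiplication formula for Gamma functions (Proposition \ref{propGammaFunction}(2)) together with Stirling (Proposition \ref{propGammaFunction}(4)) to reorganize the result into the required $(T_Z, A_Z)$-aML form.

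More concretely, the quantum Lefschetz theorem for Fano hypersurfaces produces a hypergeometric $I$-function whose coefficients restrict to $Z$ as
\[
I^Z_{r_Z m}|_Z \;=\; J^X_{r_X m}|_Z \cdot \prod_{k=1}^{dm}\bigl(dx|_Z+k\bigr) \;=\; J^X_{r_X m}|_Z \cdot \frac{\Gamma(1+dm+dx|_Z)}{\Gamma(1+dx|_Z)},
\]
under an identification of anti-canonical parameters $t_Z = t_X^{r_X/r_Z}$ between $X$ and $Z$. When $r_Z \geq 2$ the quantum Lefschetz theorem gives $J^Z_{r_Z m} = I^Z_{r_Z m}|_Z$ with no mirror-map correction, and substituting the aML asymptotic for $J^X_{r_X m}$ reduces the claim to computing the large-$m$ behaviour of
\[
\frac{T_X^{r_X m+\beta_X}\,\Gamma(1+dm+dx|_Z)}{\Gamma(1+r_X m+\beta_X)\,\Gamma(1+dx|_Z)} \cdot A_X|_Z.
\]
Invoking Stirling together with the ratio identity $\Gamma(n+\alpha)/(\Gamma(n)n^{\alpha}) \to 1$ (Proposition \ref{propGammaFunction}(3)) on the numerator and denominator produces a factor of $(d^d r_Z^{r_Z}/r_X^{r_X})^m$ times polynomial-in-$m$ pieces, which balances against $T_Z^{r_Z m}/T_X^{r_X m}$ precisely when $(T_Z/r_Z)^{r_Z} = d^d (T_X/r_X)^{r_X}$; the remaining constants reorganize into $A_Z \propto A_X|_Z / \Gamma(1+dx|_Z)$, consistent with the multiplicativity $\hat\Gamma_X|_Z = \hat\Gamma_Z \cdot \Gamma(1+dx|_Z)$ of the Gamma class along the normal bundle sequence.

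When $r_Z = 1$, the mirror map between $I^Z$ and $J^Z$ is non-trivial; along the anti-canonical line it takes the multiplicative form $J^Z(t) = e^{-c_0 t}\,I^Z(t)$, where $c_0 = d!\,\langle [\mathsf{pt}], J^X_{r_X} \rangle$ is pinned down by the $t^1$-coefficient of the $I^Z$ expansion. Multiplication by $e^{-c_0 t}$ corresponds to a convolution on the coefficient side, and a computation in the same spirit as Section \ref{sec:Product} shows that this convolution transforms a $(\tilde T_Z, \tilde A)$-aML series of Fano index $1$ into a $(\tilde T_Z - c_0,\, (\tilde T_Z/(\tilde T_Z - c_0))^{\beta_Z}\,\tilde A)$-aML series. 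Applied to $\tilde T_Z = d^d (T_X/r_X)^{r_X}$ obtained from the naive Lefschetz formula, this yields $T_Z = \tilde T_Z - c_0$, i.e.\ $T_Z + c_0 = d^d (T_X/r_X)^{r_X}$, matching the claimed relation with $r_Z = 1$.

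The main obstacle is twofold. First, executing the multiplication-formula reorganization requires treating the exponents as cohomology classes (nilpotent perturbations of integers), so the asymptotic identities of Proposition \ref{propGammaFunction} must be applied in their cohomology-valued forms and the contributions to the constant $A_Z$ tracked carefully through each factor. Second, establishing the exponential-shift property for aML series in the $r_Z = 1$ case demands sufficient control on the $o(1)$ remainder in the aML asymptotic; depending on the sharpness required, this may force us to invoke the refined asymptotic-series version of the aML condition introduced in Section \ref{sec:DisGammaI} rather than just the leading-order definition.
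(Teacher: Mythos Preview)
Your proposal is correct and follows essentially the same route as the paper: quantum Lefschetz to express $J^Z_{r_Z m}$ in terms of $J^X_{r_X m}$, then a direct Stirling computation (Proposition~\ref{propGammaFunction}(4)) for the case $r_Z>1$, and the product/convolution argument of Section~\ref{sec:Product} for the $e^{-c_0 t}$ shift when $r_Z=1$. The paper handles this last case simply by invoking Proposition~\ref{propDisGammaIProduct} on the two factors $e^{-c_0 t}$ and $I^Z(t)$, so the refined asymptotic-series version you flag as a possible obstacle is not actually needed.
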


\begin{proof}
% Set up

The $J$-function of $Z$ is given by the quantum Lefschetz theorem \cite{MR2276766, MR1839288}:
\begin{equation*}
J(t) = e^{-c_0 t} \; t^{(r_X-d)x} \; \left( \sum_{m=0}^{\infty} \: \prod_{k=1}^{dm} (dx+k) \: J^{X}_{r_X m} \: t^{(r_X-d)m} \right),
\end{equation*}
where 
\begin{equation*}
c_0 = \begin{dcases*} d! \; \langle [\mathsf{pt}], J^{X}_{r_X} \rangle & when $r_Z=r_X-d=1$, \\ 0 & when $r_Z=r_X-d>1$. \end{dcases*}
\end{equation*}

% $r_Z=r_X-d>1$

\subsubsection*{(Case 1: $r_Z=r_X-d>1$)}

In the case where $r_Z=r_X-d>1$, we have
\begin{equation*}
J^Z_{r_Z m} = \frac{\Gamma(1+dm+dx)}{\Gamma(1+dx)} \; J^{X}_{r_X m}.
\end{equation*}
Given that $t^{\frac{1}{2} \dim X}J^X(t)$ is asymptotically Mittag-Leffler, the coefficients $J^X_{r_Y m}$ have the asymptotic equivlance as $m\to\infty$:
\begin{equation*}
J^{X}_{r_X m} = \frac{T_X^{r_X m + \frac{1}{2} \dim X + c_1(X)}}{\Gamma \left( 1 + r_X m + \frac{1}{2} \dim X + c_1(X) \right)} \cdot \left( A_X + \mathit{o}(1) \right).
\end{equation*}
Consequently, we obtain
\begingroup
\allowdisplaybreaks[1]
\begin{align*}
J^Z_{r_Z m} =& \; \frac{\Gamma(1+dm+dx)}{\Gamma(1+dx)} \; J^{X}_{r_X m} \\
=& \; \frac{1}{\Gamma \left( 1 + r_Z m + \frac{1}{2} \dim Z + c_1(Z) \right)} \; \frac{1}{\Gamma(1+dx)} \\
& \; \cdot T_X^{r_X m + \frac{1}{2} \dim X + c_1(X)} \; \frac{\Gamma \left( 1 + r_Z m + \frac{1}{2} \; \dim Z + c_1(Z) \right) \Gamma(1+dm+dx)}{\Gamma \left( 1 + r_X m + \frac{1}{2} \dim X + c_1(X) \right)} \\
& \hspace{24em} \cdot \left( A_X + \mathit{o}(1) \right) \\[3ex]
=& \; \frac{1}{\Gamma \left( 1 + r_Z m + \frac{1}{2} \dim Z + c_1(Z) \right)} \; \frac{1}{\Gamma(1+dx)} \; T_X^{r_X m + \frac{1}{2} \dim X + c_1(X)}\\
& \; \cdot \frac{(2\pi)^{\frac{1}{2}} \; \left(r_Z m \right)^{\frac{1}{2} + r_Z m + \frac{1}{2} \dim Z + r_Z x} \; e^{-r_Z m} \; (2\pi)^{\frac{1}{2}} \; \left( dm \right)^{\frac{1}{2} + dm+dx} \; e^{-dm}}{(2\pi)^{\frac{1}{2}} \; \left((r_Z+d) m\right)^{\frac{1}{2} + (r_Z+d) m + \frac{1}{2} (\dim Z+1) + (r_Z+d)x} \; e^{-(r_Z+d) m}} \\
& \hspace{24em} \cdot \left( A_X + \mathit{o}(1) \right) \\[3ex]
=& \frac{1}{\Gamma \left( 1 + r_Z m + \frac{1}{2} \dim Z + c_1(Z) \right)} \; \left( \left( T_X^{r_Z+d} \; \frac{r_Z^{r_Z} d^d}{(r_Z+d)^{r_Z+d}} \right)^{\frac{1}{r_Z}} \right)^{r_Z m + \frac{1}{2} \dim Z + c_1(Z)} \\
& \; \cdot (2\pi)^\frac{1}{2} \; \frac{r_Z^{\frac{1}{2}+\frac{1}{2}\dim Z} \; d^\frac{1}{2}}{(r_Z+d)^{\frac{1}{2}+\frac{1}{2}\dim Z+\frac{1}{2}}} \; T_X^{\frac{1}{2}\dim Z + \frac{1}{2}} \; \left( \left( T_X^{r_Z+d} \; \frac{r_Z^{r_Z} \; d^d}{(r_Z+d)^{r_Z+d}} \right)^{\frac{1}{r_Z}} \right)^{-\frac{1}{2} \dim Z} \\
& \hspace{19em} \cdot \frac{1}{\Gamma(1+dx)} \cdot \left( A_X + \mathit{o}(1) \right).
\end{align*}
\endgroup
Therefore, when $r_Z=r_X-d>1$, $t^{\frac{1}{2}\dim Z}J^Z(t)$ is asymptotically Mittag-Leffler, with scaling $(T,A)$ given by:
\begin{equation*}
T_Z = \left( T_X^{r_X} \; \frac{r_Z^{r_Z} \; d^d}{r_X^{r_X}} \right)^{\frac{1}{r_Z}}, \quad \left( \text{or } \left( \frac{T_Z}{r_Z} \right)^{r_Z} = d^d \; \left( \frac{T_X}{r_X} \right)^{r_X} \right)
\end{equation*}
and
\begin{equation*}
A_Z = \left( 2\pi \; \frac{r_Z \; d}{r_X} \right)^\frac{1}{2} \; \frac{r_Z^{\frac{1}{2}\dim Z}}{T_Z^{\frac{1}{2} \dim Z}} \; \frac{T_X^{\frac{1}{2}\dim X}}{r_X^{\frac{1}{2}\dim X}} \cdot \frac{A_X}{\Gamma(1+dx)}.
\end{equation*}

% $r_Z=r_X-d=1$

\subsubsection*{(Case 2: $r_Z=r_X-d=1$)}

In the case where $r_Z=r_X-d=1$, we have
\begin{equation*}
J_{m} = \sum_{m_1+m_2=m} \frac{(-c_0)^{m_1}}{\Gamma(1+m_1)} \cdot \frac{\Gamma(1+dm_2+dx)}{\Gamma(1+dx)} J^{X}_{r_X m_2}.
\end{equation*}
Applying Proposition \ref{propDisGammaIProduct} directly, the result follows from the computations performed in the $r_Z=r_X-d>1$ case. It cn be shown that $t^{\frac{1}{2}\dim Z} J^Z(t)$ is also asymptotically Mittag-Leffler, with scaling $(T,A)$ given by:
\begin{equation*}
T_Z =  d^d \; \left( \frac{T_X}{r_X} \right)^{r_X}-c_0,
\end{equation*}
and 
\begin{equation*}
A_Z = \left( 2\pi \; \frac{d}{r_X} \right)^\frac{1}{2} \; \frac{1}{T_Z^{\frac{1}{2} \dim Z}} \; \frac{T_X^{\frac{1}{2}\dim X}}{r_X^{\frac{1}{2}\dim X}} \cdot \frac{A_X}{\Gamma(1+dx)}.
\end{equation*}

\end{proof}
}

\newpage

\section*{}

% biblography
\printbibliography

\end{document}